\documentclass[12pt]{article}
\usepackage[margin=1in]{geometry}
\RequirePackage{tgtermes}
\RequirePackage{newtxtext}
\RequirePackage{newtxmath}
\RequirePackage{bm}
\usepackage{setspace}
\usepackage{pgfplots}  % 必须加载这个宏包
\pgfplotsset{compat=1.18}
\usepackage[ruled,vlined,linesnumbered]{algorithm2e}
\SetSideCommentRight
\SetCommentSty{textnormal}
\SetKwInOut{Input}{Input}
\SetKwInOut{Output}{Output}
\usepackage{tikz}
\usepackage{natbib}
 \bibpunct[, ]{(}{)}{,}{a}{}{,}%

\usepackage{multibib}
\newcites{app}{References}

\usepackage{multirow}
\usepackage{multicol}
\usepackage{booktabs}
\usepackage{graphicx}
\usepackage{subfigure}
\usepackage{diagbox}
\usepackage{float}
\usepackage{makecell}
\usepackage{colortbl}
\usepackage{soul}
\usepackage[dvipsnames, svgnames, x11names]{xcolor}

\usetikzlibrary{patterns}
\usetikzlibrary{positioning, shapes}
\usetikzlibrary{calc}
\usetikzlibrary {shapes}
\usetikzlibrary {arrows.meta}
\usetikzlibrary{fit}

\usepackage{enumitem}
\usepackage{adjustbox}
\usepackage[colorlinks,
            linkcolor=red,
            anchorcolor=green,
            citecolor=blue,urlcolor=blue
            ]{hyperref}
\allowdisplaybreaks

\usepackage[all]{hypcap}  % 必须！修正跳转锚点位置
\hypersetup{hypertexnames=false} % 解决某些命名冲突

\usepackage{amsthm}
\usepackage{amsmath}
\newtheorem{theorem}{Theorem}
\newtheorem{lemma}[theorem]{Lemma}
\newtheorem{proposition}[theorem]{Proposition}
\newtheorem{corollary}[theorem]{Corollary}
\newtheorem{definition}{Definition}

\newtheorem{example}{Example}

\begin{document}
%%%%%%%%%%%%%%%%

% Removed commented VOLUME/NO/MONTH/YEAR metadata (INFORMS-specific)

% Removed RUNAUTHOR (INFORMS-specific)

% Removed RUNTITLE (INFORMS-specific)
% \RUNTITLE{The CRSP with Makespan Minimization}

% Full title:
\title{The Car Resequencing Problem with Makespan Minimization}

\author{
Xinyi Guo\textsuperscript{1} and Jean-Fran\c{c}ois C\^{o}t\'e\textsuperscript{*,2}\\[12pt]
{\normalsize \textsuperscript{1}\textit{Shenzhen International Graduate School, Tsinghua University, Shenzhen, China}}\\
{\normalsize \textsuperscript{2}\textit{CIRRELT, Universit\'e Laval, Qu\'ebec, Canada}}
}

\date{}

\maketitle

\begingroup
\renewcommand\thefootnote{}
\footnotetext{$^*$Corresponding author}
\footnotetext{\noindent Email addresses: \texttt{guoxinyi21@tsinghua.org.cn} (X.~Guo), \texttt{Jean-Francois.Cote@fsa.ulaval.ca} (J.-F.~C\^{o}t\'e)}
\endgroup
\setcounter{footnote}{0}

\begin{abstract}
\textbf{\textit{Problem definition}:} The car resequencing problem involves rearranging the sequence of cars between two adjacent production shops via an intermediate buffer. %This study explores a new variant that aims to minimize the resequencing makespan when the buffer features several forward lanes and one return lane (RL). Both types of lanes operate on a first-in-first-out basis, but in opposite directions. \textcolor{red}{Through the RL, cars can circulate between forward lanes.} %cars from any forward lane can loop back to re-enter another.
%The car resequencing problem involves rearranging the sequence of cars from a production shop into a new sequence for the next shop via an intermediate buffer.
This study explores a new variant that aims to minimize the resequencing makespan when the buffer features several forward lanes and a return lane (RL). Both types of lanes follow first-in-first-out rules, but operate in opposite directions. %Both types of lanes operate on a first-in-first-out basis, but in opposite directions.
Through the RL, cars can circulate between forward lanes, thereby increasing resequencing flexibility and complexity. %Cars can circulate between forward lanes via the RL, increasing resequencing flexibility and complexity.
The goal is to position cars over time to complete the target sequence change in the shortest possible time. \textbf{\textit{Methodology/results}:} We prove the problem is NP-complete and develop an exact two-stage branch-and-price approach integrated with constraint programming (BP-CP). Three versions of the master problem (MP) are proposed for the first stage, each paired with a subproblem defined on a specific graph. Using partial integer solutions from branching, a constraint-programming model built in the second stage yields either a feasible solution to the problem or a feasibility cut added to the MP. Experimental results indicate that BP-CP greatly increases the scale of solvable instances for a production batch, especially with a moderately relaxed MP. A comparative analysis shows that the RL markedly enhances resequencing flexibility, enabling smaller buffers to attain makespans comparable to those of larger buffers without the RL across varying resequencing complexities. \textbf{\textit{Managerial implications}:} Expanding lane capacity alone yields limited gains in resequencing performance and may even reduce efficiency. The RL acts
as several forward lanes, supporting a smaller and cost-effective buffer design while maintaining efficient just-in-time operations.
\end{abstract}

\noindent\textbf{Keywords:} Car resequencing, Branch-and-price, Constraint programming

%%%%%%%%%%%%%%%%%%%%%%%%%%%%%%%%%%%%%%%%%%%%%%%%%%%%%%%%%%%%%%%%%%%%%%
% Text of your paper here
\section{Introduction}\label{sec1}
In car manufacturing, various car models with different attributes, such as color and configuration, are arranged in a sequence for processing on a mixed-model production line. This line runs through multiple consecutive shops, each with production costs that depend mainly on the car sequence. For example, the paint shop seeks to minimize color changes between adjacent cars, and the assembly shop aims to stabilize option consumption. More broadly, decisions regarding production sequences are critical for maintaining \textit{just-in-time} (JIT) operations and enhancing manufacturing performance \citep{kubiak2008just, bray2015production}. The \textit{car sequencing problem} typically concerns finding a sequence that minimizes costs either in a single shop or across multiple shops jointly \citep{boysen2009sequencing}.

%The \textit{car sequencing problem} concerns finding a sequence that minimizes costs in a single shop or across multiple shops jointly \citep{boysen2009sequencing}.

%In car manufacturing, various car models with different attributes, such as color and configuration, are arranged in a sequence for processing on a mixed-model production line. This line runs through multiple consecutive shops, each incurring production costs that depend mainly on the car sequence. For example, the paint shop prefers to minimize the color changes between adjacent cars, and the assembly shop aims to consume options steadily. The \textit{car sequencing problem} \citep{boysen2009sequencing} concerns determining the sequence that minimizes costs for a single shop or multiple shops jointly.

%%%%%%%To meet the growing demand for customized orders, the mixed-model production line (MMPL) is widely used in various industries to process a variety of products in a sequence. In automotive manufacturing, an MMPL runs through three consecutive shops: the body shop, the paint shop, and the assembly shop. The order of diverse car models arranged on this line significantly impacts the production costs in each shop. For the paint shop, an ideal sequence has the fewest color changes, while in the assembly shop, it aims to reduce work overload. The \textit{car sequencing problem}, thoroughly reviewed by \citet{boysen2009sequencing} and \citet{solnon2008car}, seeks a car sequence to optimize these single-shop or integrated multi-shop objectives.

Generally, the car sequence needs to be adjusted between two adjacent shops. One reason is that, driven by distinct production goals, the optimal sequence with the lowest costs preferred by each shop differs. Also, short-term disruptions like missing parts or quality defects force the removal of affected cars from the original sequence. To restore a desirable sequence, resequencing is necessary.

%%Generally, the car sequence needs to be adjusted between two adjacent shops. One reason is that, driven by distinct production goals, the optimal sequence with the lowest costs preferred by each shop differs. Also, short-term disruptions like missing parts or quality issues cause the removal of affected cars from the original sequence. To restore a desirable sequence, cars must be rearranged.

%%%%%%%Since different shops have their respective preferred sequences that result in the lowest costs, the car sequence typically requires adjustment between two adjacent shops. On the other hand, changes in the sequence are inevitable due to short-term disruptions in production, such as missing parts or quality problems, which cause the removal of affected cars from the original sequence. To regain a desirable sequence, cars must be rearranged \citep{boysen2012resequencing}.

\begin{figure}[htbp]
    \centering

 \begin{tikzpicture}[every node/.style={draw},
		rotate border/.style={shape border uses incircle, shape border rotate=#1}, transform shape,  % 允许整体变换
    scale=1.2]
        \matrix [draw=white,
		%inner xsep=0.2cm,
		column sep=0.03cm, row sep=0.03cm]
		{
			%row1
			\node [blue, text=white, minimum height=0.64cm, minimum width=0.76cm, line width=0.3pt] (n1) {};
			\pgfmatrixnextcell \node [blue, text=white, minimum height=0.64cm, minimum width=0.76cm, line width=0.3pt] (n2) {};
            \pgfmatrixnextcell \node [blue, text=white, minimum height=0.64cm, minimum width=0.76cm, line width=0.3pt] (n3) {}; \\
			%row2
			\node [blue, text=white, minimum height=0.64cm, minimum width=0.76cm, line width=0.3pt] (n4) {};
			\pgfmatrixnextcell \node [blue, text=white, minimum height=0.64cm, minimum width=0.76cm, line width=0.3pt] (n5) {};
            \pgfmatrixnextcell \node [blue, text=white, minimum height=0.64cm, minimum width=0.76cm, line width=0.3pt] (n6) {}; \\
			%row3
			\node [blue, text=white, minimum height=0.64cm, minimum width=0.76cm, line width=0.3pt] (n7) {};
			\pgfmatrixnextcell \node [blue, fill=white, text=white, minimum height=0.64cm, minimum width=0.76cm, line width=0.3pt] (n8) {};
            \pgfmatrixnextcell \node [blue, fill=white, text=white, minimum height=0.64cm, minimum width=0.76cm, line width=0.3pt] (n9) {}; \\
		};
        %%big rectangle
		\node[draw=black, line width=0.4pt, minimum height=1.85cm,  minimum width=2.5cm, rectangle, fill=none] (n01) at (0,0) {};
        %%arrows within cells
        \draw[-Stealth,
		blue,
		line width=0.4pt] ($(n1.west)+(0.08cm,0)$) -- ($(n3.east)+(-0.08cm,0)$);
        \draw[-Stealth,
		blue,
		line width=0.4pt] ($(n4.west)+(0.08cm,0)$) -- ($(n6.east)+(-0.08cm,0)$);
		\draw[-Stealth,
		blue,
		line width=0.4pt] ($(n7.west)+(0.08cm,0)$) -- ($(n9.east)+(-0.08cm,0)$);
        %%two arrows
        \draw[->,
		gray,
		line width=0.4pt] ($(n7.north)+(-0.45cm,-0.3cm)$) -- ($(n1.south)+(-0.45cm,0.3cm)$);
        \draw[->,
		gray,
		line width=0.4pt] ($(n3.south)+(0.45cm,0.3cm)$) -- ($(n9.north)+(0.45cm,-0.3cm)$);
        %% cars
		\node[draw, circle, fill=gray!30, left=of n7, xshift=+0.45cm,
        minimum size=0.32cm,
		inner sep=0.6pt, font=\scriptsize
		] (nc1) {1};
		\node[draw, circle, fill=gray!30, left=of nc1, xshift=+0.95cm, minimum size=0.32cm,
		inner sep=0.6pt, font=\scriptsize
		] (nc2) {2};
        \node[draw, circle, fill=gray!30, left=of nc2, xshift=+0.95cm, minimum size=0.32cm,
		inner sep=0.6pt, font=\scriptsize
		] (nc3) {3};
        \node[draw, circle, fill=gray!30, left=of nc3, xshift=+0.95cm, minimum size=0.32cm,
		inner sep=0.6pt, font=\scriptsize
		] (nc4) {4};
        \node[draw, circle, fill=gray!30, left=of nc4, xshift=+0.95cm, minimum size=0.32cm,
		inner sep=0.6pt, font=\scriptsize
		] (nc5) {5};
        %legend
        \node [blue, fill=white, text=white, minimum height=0.35cm, minimum width=0.52cm, line width=0.3pt, below=of nc1, yshift=+0.6cm, xshift=+0.7cm] (c1) {};
        \node [blue, fill=white, text=white, minimum height=0.35cm, minimum width=0.52cm, line width=0.3pt, right=of c1, xshift=-0.97cm] (c2) {};
        \node [blue, fill=white, text=white, minimum height=0.35cm, minimum width=0.52cm, line width=0.3pt, right=of c2, xshift=-0.97cm] (c3) {};
        \draw[-Stealth,
		blue,
		line width=0.4pt] ($(c1.west)+(0.08cm,0)$) -- ($(c3.east)+(-0.08cm,0)$);
        \node[draw=none, right=of c3, xshift=-1.1cm, blue]{\begin{minipage}[]{1.8cm}
				\centering
                \fontsize{9}{7}\selectfont  % 6pt字体，7pt行距（行距=字体大小+1pt）
				forward lane
		\end{minipage}};
        %%sequences
        \draw [-latex,line width=0.2mm] ([yshift=-7.5pt] nc5.west) -- ([yshift=-7.5pt, xshift=+5pt] nc1.east) node[above=of nc3, yshift=-0.9cm, draw=none, minimum height=0.2cm]
		{\begin{minipage}[]{1.8cm}
				\centering
                \fontsize{9}{7}\selectfont  % 6pt字体，7pt行距（行距=字体大小+1pt）
				upstream\\
				car sequence
		\end{minipage}};
        \draw [-latex,line width=0.2mm] ([yshift=-7.5pt, xshift=+103pt] nc2.west) -- ([yshift=-7.5pt, xshift=+103pt] nc1.east);
        \node [draw=none, minimum height=0.2cm, minimum width=0.52cm, line width=0.3pt, above=of n2, font=\fontsize{9}{7}\selectfont, yshift=-0.9cm] (c3) {(a) Mix bank (MB) };
    \end{tikzpicture}
    \begin{tikzpicture}[every node/.style={draw},
		rotate border/.style={shape border uses incircle, shape border rotate=#1}, transform shape,  % 允许整体变换
    scale=1.2]
    \matrix [draw=white,
		column sep=0.03cm, row sep=0.03cm]
		{
			%row1
			\node [blue, text=white, minimum height=0.64cm, minimum width=0.76cm, line width=0.3pt] (n1) {};
			\pgfmatrixnextcell \node [blue, text=white, minimum height=0.64cm, minimum width=0.76cm, line width=0.3pt] (n2) {};
            \pgfmatrixnextcell \node [blue, text=white, minimum height=0.64cm, minimum width=0.76cm, line width=0.3pt] (n3) {}; \\
			%row2
			\node [blue, text=white, minimum height=0.64cm, minimum width=0.76cm, line width=0.3pt] (n4) {};
			\pgfmatrixnextcell \node [blue, text=white, minimum height=0.64cm, minimum width=0.76cm, line width=0.3pt] (n5) {};
            \pgfmatrixnextcell \node [blue, text=white, minimum height=0.64cm, minimum width=0.76cm, line width=0.3pt] (n6) {}; \\
			%row3
			\node [red, text=white, minimum height=0.64cm, minimum width=0.76cm, line width=0.3pt] (n7) {};
			\pgfmatrixnextcell \node [red, fill=white, text=white, minimum height=0.64cm, minimum width=0.76cm, line width=0.3pt] (n8) {};
            \pgfmatrixnextcell \node [red, fill=white, text=white, minimum height=0.64cm, minimum width=0.76cm, line width=0.3pt] (n9) {}; \\
		};
        %%big rectangle
		\node[draw=black, line width=0.4pt, minimum height=1.85cm,  minimum width=2.5cm, rectangle, fill=none] (n01) at (0,0) {};
        \draw[-Stealth,
		blue,
		line width=0.4pt] ($(n1.west)+(0.08cm,0)$) -- ($(n3.east)+(-0.08cm,0)$);
        \draw[-Stealth,
		blue,
		line width=0.4pt] ($(n4.west)+(0.08cm,0)$) -- ($(n6.east)+(-0.08cm,0)$);
		\draw[-Stealth,
		red,
		line width=0.4pt] ($(n9.east)+(-0.08cm,0)$) -- ($(n7.west)+(0.08cm,0)$);
        \draw[->,
		gray,
		line width=0.4pt] ($(n7.north)+(-0.45cm,-0.3cm)$) -- ($(n1.south)+(-0.45cm,0.3cm)$);
        \draw[->,
		gray,
		line width=0.4pt] ($(n3.south)+(0.45cm,0.3cm)$) -- ($(n9.north)+(0.45cm,-0.3cm)$);
        %% cars
		\node[draw, circle, fill=gray!30, left=of n7, xshift=+0.45cm,
        minimum size=0.32cm,
		inner sep=0.6pt, font=\scriptsize
		] (nc1) {1};
        \node[above=of nc1, draw=none, yshift=-1cm, font=\fontsize{9}{7}\selectfont] {$1$};
		\node[draw, circle, fill=gray!30, left=of nc1, xshift=+0.95cm, minimum size=0.32cm,
		inner sep=0.6pt, font=\scriptsize
		] (nc2) {2};
        \node[above=of nc2, draw=none, yshift=-1cm, font=\fontsize{9}{7}\selectfont] {$2$};
        \node[draw, circle, fill=gray!30, left=of nc2, xshift=+0.95cm, minimum size=0.32cm,
		inner sep=0.6pt, font=\scriptsize
		] (nc3) {3};
        \node[above=of nc3, draw=none, yshift=-1cm, font=\fontsize{9}{7}\selectfont] {$3$};
        \node[draw, circle, fill=gray!30, left=of nc3, xshift=+0.95cm, minimum size=0.32cm,
		inner sep=0.6pt, font=\scriptsize
		] (nc4) {4};
        \node[above=of nc4, draw=none, yshift=-1cm, font=\fontsize{9}{7}\selectfont] {$4$};
        \node[draw, circle, fill=gray!30, left=of nc4, xshift=+0.95cm, minimum size=0.32cm,
		inner sep=0.6pt, font=\scriptsize
		] (nc5) {5};
        \node[above=of nc5, draw=none, yshift=-1cm, font=\fontsize{9}{7}\selectfont] {$5$};
        \draw [-latex,line width=0.2mm] ([yshift=-7.5pt] nc5.west) -- ([yshift=-7.5pt, xshift=+5pt] nc1.east) node[above=of nc3, yshift=-0.5cm, draw=none, minimum height=0.2cm]
		{\begin{minipage}[]{1.8cm}
				\centering
                \fontsize{9}{7}\selectfont  % 6pt字体，7pt行距（行距=字体大小+1pt）
				upstream\\
				car sequence
		\end{minipage}};
        %% cars
		\node[draw, circle, fill=gray!30, right=of n9, xshift=-0.45cm,
        minimum size=0.32cm,
		inner sep=0.6pt, font=\scriptsize
		] (nc11) {1};
        \node[above=of nc11, draw=none, yshift=-1cm, font=\fontsize{9}{7}\selectfont] {$\pi_5$};
		\node[draw, circle, fill=gray!30, right=of nc11, xshift=-0.95cm, minimum size=0.32cm,
		inner sep=0.6pt, font=\scriptsize
		] (nc21) {5};
        \node[above=of nc21, draw=none, yshift=-1cm, font=\fontsize{9}{7}\selectfont] {$\pi_4$};
        \node[draw, circle, fill=gray!30, right=of nc21, xshift=-0.95cm, minimum size=0.32cm,
		inner sep=0.6pt, font=\scriptsize
		] (nc31) {2};
        \node[above=of nc31, draw=none, yshift=-1cm, font=\fontsize{9}{7}\selectfont] {$\pi_3$};
        \node[draw, circle, fill=gray!30, right=of nc31, xshift=-0.95cm, minimum size=0.32cm,
		inner sep=0.6pt, font=\scriptsize
		] (nc41) {3};
        \node[above=of nc41, draw=none, yshift=-1cm, font=\fontsize{9}{7}\selectfont] {$\pi_2$};
        \node[draw, circle, fill=gray!30, right=of nc41, xshift=-0.95cm, minimum size=0.32cm,
		inner sep=0.6pt, font=\scriptsize
		] (nc51) {4};
        \node[above=of nc51, draw=none, yshift=-1cm, font=\fontsize{9}{7}\selectfont] {$\pi_1$};
        \draw [-latex,line width=0.2mm] ([yshift=-7.5pt, xshift=-5pt] nc11.west) -- ([yshift=-7.5pt, xshift=+2pt] nc51.east) node[above=of nc31, yshift=-0.5cm, draw=none, minimum height=0.2cm]
		{\begin{minipage}[]{1.8cm}
				\centering
                \fontsize{9}{7}\selectfont  % 6pt字体，7pt行距（行距=字体大小+1pt）
				downstream\\
				car sequence
		\end{minipage}};
        \node[draw, dashed, circle, fill=gray!30, above=of n8,
        yshift=-1.42cm,
        minimum size=0.32cm,
		inner sep=0.6pt, font=\scriptsize
		] (nc1-1) {1};
        \node[draw, dashed, circle, fill=gray!30, above=of n3,
        yshift=-1.42cm,
        minimum size=0.32cm,
		inner sep=0.6pt, font=\scriptsize
		] (nc2-1) {2};
        \node[draw, dashed, circle, fill=gray!30, above=of n2,
        yshift=-1.42cm,
        minimum size=0.32cm,
		inner sep=0.6pt, font=\scriptsize
		] (nc3-1) {3};
        \node[draw, dashed, circle, fill=gray!30, above=of n5,
        yshift=-1.42cm,
        minimum size=0.32cm,
		inner sep=0.6pt, font=\scriptsize
		] (nc4-1) {4};
        \node[draw, dashed, circle, fill=gray!30, above=of n4,
        yshift=-1.42cm,
        minimum size=0.32cm,
		inner sep=0.6pt, font=\scriptsize
		] (nc5-1) {5};
        %legend
        \node [red, fill=white, text=white, minimum height=0.35cm, minimum width=0.52cm, line width=0.3pt, below=of nc2, yshift=+0.6cm] (c1) {};
        \node [red, fill=white, text=white, minimum height=0.35cm, minimum width=0.52cm, line width=0.3pt, right=of c1, xshift=-0.97cm] (c2) {};
        \node [red, fill=white, text=white, minimum height=0.35cm, minimum width=0.52cm, line width=0.3pt, right=of c2, xshift=-0.97cm] (c3) {};
        \draw[-Stealth,
		red,
		line width=0.4pt] ($(c3.east)+(-0.08cm,0)$) -- ($(c1.west)+(0.08cm,0)$);
        \node[draw=none, right=of c3, xshift=-1.1cm, red]{\begin{minipage}[]{2.3cm}
				\centering
                \fontsize{9}{7}\selectfont  % 6pt字体，7pt行距（行距=字体大小+1pt）
				return lane (RL)
		\end{minipage}};
        \node [draw=none, minimum height=0.2cm, minimum width=0.52cm, line width=0.3pt, above=of n2, font=\fontsize{9}{7}\selectfont, yshift=-0.9cm] (c3) {(b) Mix bank with a return lane (MB-RL)};
    \end{tikzpicture}

    \caption{Illustration of the MB and MB-RL of the same size with 3 lanes and a lane capacity of 3. \label{fig1}}

\end{figure}

%a new sequence that minimizes production costs in the subsequent shop.

The \textit{car resequencing problem} (CRSP) typically addresses how to rearrange a car sequence from one shop into a new sequence that minimizes the production costs in the next shop. Resequencing can be \textit{virtual}, in which preset models are reassigned to car bodies without altering their relative positions. Alternatively, \textit{physical resequencing} uses a buffer between two adjacent shops, identified as the upstream and downstream shops. A car from the upstream shop can be temporarily stored in the buffer, allowing its successors to proceed downstream first. The \textit{mix bank} (MB) is the most common buffer due to its cost efficiency \citep{taube2018resequencing}. As shown in Figure \ref{fig1}(a), it consists of several forward lanes, with cars entering and exiting on a \textit{first-in-first-out} (FIFO) basis. However, the resequencing capability of an MB is strictly limited by its size and FIFO rules. For example, an MB with 3 lanes, each able to hold 3 cars, cannot transform the 5-car upstream sequence $[1,2,3,4,5]$ (led by car 1) into the desired downstream sequence $[4,3,2,5,1]$ (led by car 4).

%%The resulting \textit{car resequencing problem} (CRSP) typically studies how to rearrange a car sequence in one shop into a new one that minimizes production costs in the next shop. This resequencing can be \textit{virtual}, achieved by reassigning preset models to car bodies without altering their relative positions, or \textit{physical}, utilizing a buffer between two adjacent shops \citep{boysen2012resequencing}, called the upstream and downstream shops. In the latter, a car leaving the upstream shop can be temporarily in the buffer, allowing its successors to proceed downstream first. The \textit{mix bank} (MB) is the most common buffer due to its cost efficiency \citep{taube2018resequencing}. As shown in Figure \ref{fig1}(a), it features several forward lanes where cars enter and exit on a \textit{first-in-first-out} (FIFO) basis. However, its resequencing capability is strictly limited by the buffer size and FIFO rules. For example, an MB with 3 lanes each holding 3 cars cannot change an upstream sequence $[1,2,3,4,5]$ starting with car 1 into a desired downstream sequence $[4,3,2,5,1]$ starting with car 4.
%with an upstream sequence T1 starting with car 1, the desired downstream sequence T2 starting with car 4 cannot be achieved using an MB with three forward lanes of capacity two.

To improve the flexibility of the MB, carmakers tend to adopt a new buffer: the \textit{mix bank with a return lane} (MB-RL), which is examined in this work. This buffer extends the MB with a \textit{return lane} (RL), where cars move against those in the forward lanes. Through the RL, the first car in any forward lane can move back to the buffer entrance for reassignment to a forward lane. With this loop structure, cars can switch among forward lanes. For illustration, Figure \ref{fig1}(b) shows an MB-RL of the same size as the MB in Figure \ref{fig1}(a). By requiring cars 1 and 2 to enter the RL and later re-enter the forward lanes, this MB-RL can achieve the sequence change given in the previous example.
%By requiring only car 3 to enter the RL and later re-enter a forward lane, this MB-RL achieves the sequence change in the previous example.

%%%%%To improve the flexibility of the MB, carmakers now adopt a new buffer: the \textit{mix bank with a return lane} (MB-RL), which is examined in this work. This buffer extends the MB using a \textit{return lane} (RL) where cars move against those in the forward lanes. The first car in any forward lane can enter the RL, move back toward the upstream shop, and then be reassigned to another forward lane at the buffer entrance. With this loop structure, cars can switch among forward lanes. An MB-RL of the same size as the MB in Figure \ref{fig1}(a) is illustrated in Figure \ref{fig1}(b). It enables the sequence change in the previous example by requiring car 3 to enter the RL and later re-enter a forward lane.

%The first car in any forward lane can enter the RL. Cars returning via the RL are then reassigned to forward lanes at the MB-RL entrance. As a result, cars can loop between the RL and forward lanes, allowing them to re-enter forward lanes repeatedly. The later the cars enter forward lanes, the farther downstream positions they can occupy. Figure \ref{fig1}(b) shows an MB-RL of the same size as the MB in Figure \ref{fig1}(a). It enables the sequence change in the previous example by only requiring car 3 to enter the RL and re-enter a forward lane at a later time.

The operation of the MB-RL poses a critical challenge: minimizing the \textit{resequencing makespan}, defined as the total time needed to transform the upstream sequence into the downstream one. By circulating, any order of the cars within a loop formed by a forward lane and the RL can be achieved. Nevertheless, allowing cars to loop blindly may cause production delays for the downstream shop, especially damaging a make-to-order production mode with strict due dates \citep{boysen2009sequencing}.

%Through a circulating motion,
%%%%%By circulating, any order of cars within a loop formed by a forward lane and the RL can be achieved. Nevertheless, allowing cars to loop blindly may cause production delays for the downstream shop, especially damaging a make-to-order production mode with strict due dates \citep{boysen2009sequencing}. Therefore, a key challenge in operating the MB-RL is to reduce the \textit{resequencing time}, defined as the total time needed to transform the upstream sequence into the downstream one.

%%%%%The operation of the MB-RL presents a critical challenge: minimizing \textit{resequencing makespan}, which is the total time taken in the MB-RL to adjust the upstream car sequence to the downstream one. Prolonged time spent on resequencing poses the risk of production delays for the downstream shop, especially in a make-to-order production mode with strict due dates \citep{boysen2009sequencing}. It is impractical to constantly transport cars in loops within the MB-RL to attain the desired downstream sequence.

In this work, we focus on improving resequencing efficiency when using the MB-RL to rearrange the cars produced in a one-hour shift. The problem addressed is called the \textit{car resequencing problem with makespan minimization} (CRSP-MS). Unlike the traditional CRSP, where the car sequence is the decision to be made, the CRSP-MS treats both upstream and downstream sequences as fixed, allowing exclusive attention to the dynamic scheduling of cars. %allowing exclusive attention to dynamic car scheduling.
By determining the position of each car over time, the objective of the CRSP-MS is to minimize the \textit{resequencing makespan}.

%%%%%%%In this work, we focus on the resequencing efficiency of the CRSP with an MB-RL. The new version of the CRSP addressed is referred to as the \textit{car resequencing problem with makespan minimization} (CRSP-MS). By determining the position of each car at each timestamp, the goal of the CRSP-MS is to minimize the \textit{resequencing makespan} needed for the MB-RL to change the upstream sequence into a given downstream one for cars produced in a one-hour shift. Since previous studies on the CRSP typically decouple sequence determination from buffer scheduling \citep{guo2025logic}, we exclude sequence determination from CRSP-MS and treat both sequences as known to focus exclusively on the dynamic scheduling process in the MB-RL.

Beyond formally defining the CRSP-MS, this work focuses on its exact solution. Using a \textit{time-space network} (TSN) representation, we develop a two-stage \textit{branch-and-price approach integrated with constraint programming} (BP-CP) to increase the number of cars that can be rearranged within a practical production batch (e.g., 50 cars, as in \citet{wu2021mathematical} and \citet{guo2025logic}). In the first stage, %For the \textit{column generation} (CG) in the first stage,
three versions of the \textit{master problem} (MP) that differ in lower-bound tightness are proposed. Each MP has a counterpart \textit{shortest-path pricing subproblem} defined on a specific TSN graph to generate new columns. After \textit{column generation} (CG) converges, integrality is enforced only on the makespan and the number of times each car enters the RL. When the related decision variables are integers, we solve a \textit{constraint programming} (CP) model in the second stage to find a feasible CRSP-MS solution. If the CP model proves infeasible, a feasibility cut is added to the MP, and CG restarts. The algorithm is enhanced with a valid lower bound and a preprocessing technique. Computational results demonstrate the efficiency of BP-CP, which manages up to 120 cars within one hour. The comparison with the traditional MB confirms the superior resequencing capability and the comparable efficiency of the MB-RL across various resequencing difficulties.

Our contribution is threefold. First, we formalize the problem of minimizing the \textit{resequencing makespan} when using the emerging buffer MB-RL to rearrange cars. The introduced CRSP-MS addresses the practical challenges faced by carmakers in improving the operational efficiency of the MB-RL \citep{moon2005simulation, yu2023analysis}, and we establish its computational complexity. Methodologically, then, our BP-CP framework efficiently handles large instances and serves as a reference for applying \textit{branch-and-price} (BP) to highly symmetric problems that require exhaustive branching. Although tailored for scheduling cars in the MB-RL, it also applies to analogous systems, such as (re)humping operations in railway shunting yards \citep{boysen2012shunting} and loop sorters in warehouses \citep{boysen2024order}. Managerially, we provide the first known numerical comparison between the MB-RL and the MB. The findings show that adding an RL significantly enhances resequencing capability while preserving makespan performance, enabling managers to achieve higher throughput with a smaller buffer footprint and support cost-effective buffer design decisions.

%Our contribution is threefold. First, we formalize the problem of minimizing the \textit{resequencing makespan} in the context of using the emerging buffer MB-RL to rearrange cars. The introduced CRPS-MS extends the classical CRSP, tackling the practical challenges carmakers face in enhancing the operational efficiency of the MB-RL \citep{moon2005simulation, yu2023analysis}, and its computational complexity is established.  Methodologically, then, our BP-CP framework offers a reference for applying BP to highly symmetric problems that require exhaustive branching. Although tailored for scheduling cars in the MB-RL, it also applies to analogous systems, such as (re)humping operations in railway shunting yards \citep{boysen2012shunting}. Finally, we present the first known numerical comparison between the MB-RL and the MB regarding resequencing capability and efficiency.

The rest of this paper is organized as follows. Section \ref{sec2} reviews related studies on the physical CRSP. Section \ref{sec3} formally defines the CRSP-MS and introduces its TSN representation. Section \ref{sec4} describes the proposed BP-CP approach, including its fundamental components, enhancements, and overall implementation. Section \ref{sec5} presents the computational performance of BP-CP and compares the MB-RL with the MB to derive managerial insights into its operational benefits. Section \ref{sec6} finally concludes this work.

%%The rest of this paper is organized as follows. Section \ref{sec2} reviews related studies on the physical CRSP. Section \ref{sec3} formally defines the CRSP-MS and introduces its TSN representation. Section \ref{sec4} describes the proposed BP-CP approach, including its fundamental components, enhancements, and overall implementation. The computational performance of BP-CP and a comparison between the MB-RL and the MB are presented in Section \ref{sec5}. Section \ref{sec6} finally concludes this work.

%%%%%%%The remainder of this paper is organized as follows. Section \ref{sec2} reviews related studies on the physical CRSP. The CRSP-MS is formally defined in Section \ref{sec3-1}, followed by the methods for checking its feasibility in Section \ref{sec3-2} and a compact formulation in Section \ref{sec3-3}. Section \ref{sec4} details our three-stage BP-CP approach, including the formulations for each stage, algorithmic enhancements, and overall implementation. The computational analysis, which refers to the proposed methods, the impact of buffer size, as well as a comparison between MB and MB-RL, is presented in Section \ref{sec5}. Finally, Section \ref{sec6} concludes this work.

\section{Literature Review}
\label{sec2}
An extensive survey of the CRSP is provided by \citet{boysen2012resequencing}. Since \textit{virtual resequencing} operates without physical buffers, as previously mentioned, our review focuses on research related to \textit{physical resequencing}, particularly where the MB or the MB-RL is used.

\textit{Physical resequencing} commonly uses three types of buffers: the \textit{automated storage and retrieval system} (AS/RS) \citep{lubben2023online}, the \textit{pull-off table} \citep{boysen2011car}, and the MB. Both the AS/RS and the \textit{pull-off table} provide independently accessible cells that temporarily store cars removed from the production line. While one car is offline, its successors keep moving forward until another offline car is ready for reinsertion. At equal capacity, the two buffers offer the same resequencing capability. The MB is the most widely adopted of the three due to its simple operation and lower cost \citep{taube2018resequencing%,sun2024integrating
}. However, its flexibility is constrained not only by its size but also by FIFO rules, which fix the exit order for cars in the same lane.
%fix/control/set

%Three typical buffers are used in \textit{physical resequencing}: the \textit{automated storage and retrieval system} (AS/RS), the \textit{pull-off table}, and the MB. An AS/RS features hundreds of cells to store cars, with each cell accessible individually \citep{lubben2023online}. The \textit{pull-off table} allows cars to be temporarily taken off-line from the MMPL. The successors behind an off-line car proceed to move forward until an off-line car is later reinserted into the final sequence \citep{boysen2011car}. The buffer size fundamentally limits the resequencing capability of both the AS/RS and the \textit{pull-off table}. Given the same number of storage cells, both buffers exhibit equal flexibility. The MB is used the most among the three buffers due to its operational simplicity and relatively low construction costs \citep{taube2018resequencing%,sun2024integrating
%}.
%However, its flexibility is restricted by both buffer size and FIFO rules. When retrieving cars from forward lanes, adherence to FIFO rules affects the precedence in which cars exit the MB.

%Compared to the MB, the MB-RL is more flexible but requires more sophisticated operations due to the permission to reassign cars to forward lanes, which limits the research focus on it.

Compared to the MB, the MB-RL is more flexible but requires more sophisticated operations due to the permission for cars to enter forward lanes multiple times, which limits research on it. To our knowledge, \citet{moon2005simulation} are the first to study the CRSP with an MB-RL at a Korean carmaker through simulations. For various buffer sizes, they apply simple rules to rearrange cars to reduce color changes in the obtained sequence. Later studies \citep{boysen2012resequencing, bysko2020automotive} recognize the flexibility of the RL as a future research topic. \citet{yu2023analysis} recently address the MB-RL in a multi-objective CRSP and enhance its operational efficiency by reducing RL usage.
%To our knowledge, \citet{moon2005simulation} first study the CRSP with an MB-RL through simulations at a Korean carmaker. For various buffer sizes, they apply simple rules to reduce color changes in the car sequence. Later studies \citep{boysen2012resequencing, bysko2020automotive} recognize the flexibility offered by the RL as a future research topic. \citet{yu2023analysis} recently address the MB-RL in a multi-objective CRSP and enhance its operational efficiency by reducing the overall use of the RL.

%%%%%%%%Compared to the MB, the MB-RL offers stronger resequencing capability. Nevertheless, its more complex internal scheduling, which results from cars re-entering forward lanes via the RL, has limited its study in the literature. To our knowledge, \citet{moon2005simulation} are the first to address the CRSP with an MB-RL. In their simulation study at a Korean carmaker, simple rules are used to minimize color changes across various MB-RL sizes. Later studies \citep{boysen2012resequencing, bysko2020automotive} regard the additional resequencing flexibility offered by the RL as a future research topic. More recently, \citet{yu2023analysis} employ the MB-RL for a multi-objective CRSP, where the MB-RL operational efficiency, as one optimization goal, is explicitly considered by minimizing the total number of RL usages.

Regarding the optimization objective, improving resequencing efficiency in buffers has received little attention, despite its importance for JIT production. Classical CRSP studies focus on minimizing production costs in the paint or assembly shop. In the paint shop, high cleaning costs for spray nozzles make it crucial to build large blocks of cars in the same color \citep{sun2017study}. For the assembly shop, the purpose of resequencing is to reduce work overload at stations. Only a few multi-objective CRSP models consider resequencing efficiency as a goal. Besides the aforementioned work by \citet{yu2023analysis}, \citet{leng2023multi} improve upon it by reducing due-date violations. This work is the first to specifically optimize resequencing efficiency in the MB-RL.
%Resequencing efficiency has only been considered a goal in a few multi-objective CRSP models.

%%%In the paint shop, high cleaning costs for spray nozzles make it essential to build large blocks of cars in the same color \citep{sun2017study}. In the assembly shop, resequencing aims to reduce work overload by minimizing violations of specific sequencing rules or smoothing option consumption.

%The CRSP in this context is also known as the color batching problem.

%%%%%%%%%%%%%%%Despite extensive research on physical CRSP, the resequencing efficiency of buffers has received little attention. Traditionally, the CRSP aims to minimize production costs by rearranging cars before they enter the paint shop or assembly shop. In the paint shop, high costs caused by cleaning spraying nozzles make color-change minimization, or building large blocks of the same color cars, a primary goal. This version of the CRSP is also known as the \textit{color batching problem} \citep{sun2017study}. In the assembly shop, resequencing aims to reduce work overload by minimizing violations of specific sequencing rules or to keep a steady consumption of options. Resequencing efficiency has only emerged as an optimization goal in a few multi-objective CRSP models. Beyond the aforementioned work by \citet{yu2023analysis}, which minimizes RL usage, \citet{leng2023multi} improve resequencing efficiency of the MB by reducing violations of due dates.

Existing solution methods for the CRSP with an MB or MB-RL typically assume that the buffer is large enough to hold all cars, allowing the problem to be decomposed into two static subproblems. The \textit{fill subproblem} assigns cars to lanes, and the \textit{release subproblem} determines the order in which they exit \citep{boysen2013decomposition}. Most studies focus on heuristic methods, whereas exact approaches are limited and usually address only one subproblem. For example, \citet{elahi2015optimizing} propose a multi-commodity network flow model that can solve the fill subproblem exactly for 100 cars in real time. \citet{ko2016paint} formulate the release problem as a traveling salesman problem. Using dynamic programming, they find the optimal sequence for 20 cars within an hour. This scale of solvable instances is later expanded to 56 by \citet{hong2018accelerated} with tighter bounds. In a CRSP designed to minimize the total costs across multiple shops, \citet{guo2025logic} discard the buffer-size assumption and develop a three-level logic-based Benders decomposition approach. By determining optimal upstream and downstream sequences at the first two levels and assigning cars to lanes at the third level, their method enables the resequencing of 120 cars per batch.

Among the few methods designed for the CRSP with an MB-RL, \citet{moon2005simulation} develop rule-based storage and retrieval algorithms. \citet{yu2023analysis} propose an integer programming model solved by a state-transition algorithm. To our knowledge, no exact approaches have been reported.

%%Following the problem setting of Guo et al.,
%Following \citet{guo2025logic}, our CRSP-MS does not limit the number of cars to be rearranged by the MB-RL size. The optimal upstream and downstream sequences are predefined, and the problem focuses on scheduling cars to enter and exit the MB-RL efficiently. To address the open question of quantifying the role of the RL in resequencing performance \citep{boysen2012resequencing}, we develop an exact solution method and conduct a comparative analysis between the MB-RL and the MB.
%%pending question

Following \citet{guo2025logic}, our CRSP-MS does not limit the number of cars to be rearranged by the MB-RL size. The optimal upstream and downstream sequences are predefined, and the problem focuses on efficiently scheduling cars to enter and exit the MB-RL. To address the open question of quantifying the value of the RL in resequencing \citep{boysen2012resequencing}, we develop an exact solution method and conduct a comparative analysis between the MB-RL and the MB. The results offer managers evidence-based insights into the advantages of resequencing with the MB-RL.

\section{The Car Resequencing Problem with Makespan Minimization}\label{sec3}

\subsection{Problem Definition}\label{sec3-1}
Consider two adjacent shops connected by an MB-RL and a sequence of $d$ cars to be resequenced, indexed by the set $V=\{1,2,\dots,d\}$. Each car $k\in V$ has a designated position $p_k\in P=\{1,2,\dots,d\}$ in the downstream shop. The upstream sequence $\Pi^U=[1,2,\dots,d]$ defines the order in which these cars enter the MB-RL, and the downstream sequence $\Pi^D=[\pi_1,\dots,\pi_p,\dots,\pi_d]$ specifies the order in which they exit the MB-RL to the downstream shop, where $\pi_p\in V$ denotes the upstream index of the car assigned to downstream position $p\in P$. The CRSP-MS involves transforming $\Pi^U$ into $\Pi^D$ via the MB-RL in the shortest possible time. %Consider two adjacent shops connected by an MB-RL, each with a predefined sequence of $d=|V|=|\{1,2,\dots,d\}|$ cars. The upstream sequence $\Pi^U=[1,2,\dots,d]$ defines the order in which the $d$ cars enter the MB-RL from the upstream shop, and the downstream sequence $\Pi^D=[\pi_1,\dots,\pi_p,\dots,\pi_d]$ specifies the order in which they exit the MB-RL to the downstream shop, where $\pi_p\in V$ indicates the upstream index of the car assigned to downstream position $p\in P=\{1,2,\dots,d\}$. The CRSP-MS involves transforming $\Pi^U$ into $\Pi^D$ via the MB-RL in the shortest possible time.
%seeks to transform

%%%%%Consider a set of $d=|V|=|\{1,2,\dots,d\}|$ cars with two given sequences and an MB-RL connecting two adjacent shops. The upstream sequence $\Pi^U=[1,2,\dots,d]$ indicates the order in which the cars exit the upstream shop $s$. The downstream sequence $\Pi^D=[\pi_1,\pi_2,\dots,\pi_p,\dots,\pi_d]$ specifies their positions in the downstream shop $e$, where each $\pi_p$ is the index of the upstream car assigned to downstream position $p\in P=\{1,2,\dots,d\}$. The CRSP-MS involves transforming $\Pi^U$ into $\Pi^D$ via the MB-RL in the shortest possible time.

\begin{figure}[htbp]
    \centering
\makebox[\textwidth][c]{%
 % [inline block 0: 1 envs, 23744 chars -> data_tex | \begin{tikzpicture}[every node/.style={draw}, 		rotate border/.style={shape border uses incircle, shape border rotate=#1...]

}% end makebox

    \caption{An Example of the CRSP-MS and Its TSN Graph Representation. \label{fig2}}

\end{figure}

As shown in Figure \ref{fig2}(a), the MB-RL consists of $\bar{l}$ lanes: one RL indexed by $\bar{l}$ and $l^f=\bar{l}-1$ lanes. Let $L^f=\{1,2,\dots,l^f\}$ denote the set of forward lanes, and $L=L^f\cup \{\bar{l}\}$ be that of all lanes. Each lane has $\bar{q}$ cells, and each cell $q\in Q=\{1,2,\dots,\bar{q}\}$ can hold one car. %As shown in Figure \ref{fig2}(a), \textcolor{red}{the MB-RL consists of $\bar{l}$ lanes: one RL numbered $\bar{l}$ and $l^f=\bar{l}-1$ forward lanes. Each lane in $L=L^f\cup \{\bar{l}\}=\{1,2,\dots,l^f\}\cup \{\bar{l}\}$ has a capacity of $\bar{q}$ cells, and each cell $q\in Q=\{1,2,\dots,\bar{q}\}$ can hold one car.} %As shown in Figure \ref{fig2}(a), the MB-RL comprises $\bar{l}=|L|=|\{1,2,\dots,\bar{l}\}|$ lanes: one RL numbered $\bar{l}$ and $l^f=|L^f|=|\{1,2,\dots,\bar{l}-1\}|$ forward lanes. Each lane has $\bar{q}=|Q|=|\{1,2,\dots,\bar{q}\}|$ cells, and each cell can hold one car.
The internal conveyor belt in each lane moves cars from cell to cell at a constant speed, with the RL and forward lanes operating in opposite directions. At each moment, a transfer car at the MB-RL entrance can dispatch one car from either the upstream shop or the RL into a forward lane, and another at the MB-RL exit can retrieve the first car from any forward lane to send it to either the downstream shop or the RL. The two transfer cars operate independently, allowing one car to enter a forward lane while another leaves simultaneously. A car may enter the RL multiple times. The number of cars in any lane must never exceed the capacity $\bar{q}$.

%In the direction from upstream to downstream (i.e., from left to right),
Numbering from left to right, we denote the $q$-th ($\forall q\in Q$) cell of lane $l\in L$ as $c_{lq}\in C=\{c_{lq}\mid l\in L, q\in Q\}$ and define $S=C\cup \{s,e\}$ as the set of all locations where cars may stay, with $s$ and $e$ representing the upstream and downstream shops. Recall that car 1 is the first car in $\Pi^{U}$, and car $\pi_{d}$ is the last in $\Pi^{D}$. The resequencing process starts when car 1 departs from the upstream shop and ends when car $\pi_{d}$ reaches the downstream shop. Let $\tau$ denote the maximum duration of this process. We discretize it into $\tau$ time units (TUs), forming the set $T=\{0,1,\dots,\tau\}$ of discrete timestamps. Each of the following five movements is assumed to consume one TU: \textup{(i)} from $s$ to $c_{l1}$ of forward lane $l\in L^f$, \textup{(ii)} from $c_{\bar{l}1}$ of the RL to $c_{l1}$ of forward lane $l\in L^f$, (iii) from $c_{l\bar{q}}$ of forward lane $l\in L^f$ to $e$, \textup{(iv)} from $c_{l\bar{q}}$ of forward lane $l\in L^f$ to $c_{\bar{l}\bar{q}}$ of the RL, \textup{(v)} from $c_{lq}$ to $c_{l,q+1}$ ($q\in Q\backslash \{\bar{q}\}$) of forward lane $l\in L^f$ or from $c_{\bar{l}q}$ to $c_{\bar{l},q-1}$ ($q\in Q\backslash\{1\}$) of the RL. %\textup{(i)}/\textup{(ii)} from the upstream shop/the leftmost cell of the RL to the leftmost cell of a forward lane, \textup{(iii)}/\textup{(iv)} from the rightmost cell of a forward lane to the downstream shop/the rightmost cell of the RL, and \textup{(v)} between adjacent cells within the same lane.
The goal of the CRSP-MS is to minimize the \textit{resequencing makespan}, i.e., the total time required to transport each car $k\in V$ to its designated downstream position $p_k\in P$, by determining the location of each car at each timestamp in $T$. The computational complexity of the CRSP-MS is established in the following theorem (see the online supplement for the proof).

%%%Recall that car 1 is the first car in $\Pi^{U}$, and car $\pi_{d}$ is the last car in $\Pi^{D}$. The resequencing process starts when car 1 departs from the upstream shop and ends when car $\pi_{d}$ reaches the downstream shop. Let $\tau$ represent the maximum duration of this process. We discretize it into $\tau$ time units (TUs), forming the set $T=\{0,1,\dots,\tau\}$ of discrete timestamps. Each of the following five movements is assumed to consume one TU: \textup{(i)}/\textup{(ii)} from the upstream shop/the leftmost cell of the RL to the leftmost cell of a forward lane, \textup{(iii)}/\textup{(iv)} from the rightmost cell of a forward lane to the downstream shop/the rightmost cell of the RL, and \textup{(v)} between adjacent cells within the same lane. The objective of the CRSP-MS is to minimize the \textit{resequencing makespan}, i.e., the total time required to transport each car $k\in V$ to its designated downstream position $p_k\in P$, by determining the position of each car at each timestamp in $T$. The computational complexity of the problem is established in the following theorem (see online supplement for the proof).

%The CRSP-MS is NP-hard in the strong sense, even if no car enters the RL.
\begin{theorem}\label{theo1}
    The CRSP-MS is NP-complete in the strong sense, even if no car enters the RL.
\end{theorem}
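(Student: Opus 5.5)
We would first fix the decision version: given $d$, $\Pi^U$, $\Pi^D$, $l^f$, $\bar q$ and an integer $\tau$, decide whether some schedule achieves makespan at most $\tau$. Since the objective is a time horizon discretised into TUs, the plan is to show that only instances with $\tau$ polynomially bounded in $d$ and $\bar l\bar q$ matter. This gives membership in NP and the ``strong'' qualifier in one stroke.

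\emph{Membership in NP.} The certificate is the location map $k\mapsto (\text{location at } t)$ for all $k\in V$, $t\in T$. Checking it means verifying, for each $t$:
\begin{itemize}[nosep]
\item each car makes only one of the movements (i)--(v) or stays;
\item FIFO order holds within every lane;
\item no lane holds more than $\bar q$ cars;
\item at most one car enters and at most one car leaves the forward lanes;
\item cars reach $e$ in the order $\Pi^D$.
\end{itemize}
This takes $O(d\,\tau\,\bar l\bar q)$ time. To keep the certificate polynomial, we will prove that whenever a feasible schedule exists, an optimal one has makespan $O(d(\bar q+1)\bar l)$. The idea is that a schedule can be modified so that at every TU at least one car makes progress toward its final downstream slot, because idle waiting can be compressed. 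Hence we may cap $\tau$ at this bound, and all numbers in an instance are polynomial in its size.

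\emph{Hardness.} We restrict to instances in which the RL is never used. For example, we can choose $\tau$ so tight that any detour through the RL, which costs at least $2\bar q$ extra TUs, is infeasible, or simply note that the restricted problem is a special case. We then reduce from \textsc{3-Partition}, which is strongly NP-complete: given $3m$ integers $a_i$ with $B/4<a_i<B/2$ and $\sum_i a_i=mB$, decide whether they split into $m$ triples each summing to $B$.

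The construction uses $l^f=m$ forward lanes, one per bin, and sets the capacity $\bar q$ as a function of $B$. For each integer $a_i$ we create a block of $a_i$ consecutive upstream cars whose downstream positions are reversed relative to a set of ``separator'' cars. The FIFO rule then forces the whole block into a single lane and makes its cars occupy that lane simultaneously during a common window. The separators are placed so that every block is simultaneously resident in the buffer at one critical timestamp $t^*$ forced by the makespan bound $\tau$. At $t^*$ the total occupancy is exactly $mB=m\bar q$, up to fixed offsets for separator cars. A schedule with makespan $\le\tau$ therefore exists iff the blocks can be packed into the $m$ lanes with load exactly $B$ each, which is exactly a 3-partition. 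All numbers in the construction are polynomial in $m$ and $B$, so strong NP-hardness transfers.

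\emph{Main obstacle.} The difficult step is the gadget design: we must guarantee that in \emph{every} schedule meeting $\tau$, each block stays within one lane and all blocks coexist at $t^*$. The risk is that cars could exploit the movement timing, such as entries and exits happening in parallel or blocks being staggered, to avoid the simultaneous-occupancy bottleneck. Our first attempt will use tightness of $\tau$, setting it equal to the trivial lower bound $d+\bar q+1$. This forces every car to enter and leave at specific TUs with zero slack, and from that rigid timing we would derive the occupancy argument. If this proves too rigid to encode the $a_i$, the fallback is to reduce from bounded coloring of permutation graphs. There, disjoint FIFO lanes correspond to increasing subsequences and the capacity bound corresponds to the color-class size.
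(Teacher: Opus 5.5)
\paragraph{The gap.} The hardness half has a genuine gap. The reduction is never actually built, and the two properties you need in \emph{every} schedule are asserted rather than enforced: all blocks resident at the same time, and each block confined to one lane.

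The tool you propose for enforcing them fails. Car $\pi_p$ cannot reach $e$ before $\pi_p+\bar q$, and the $d-p$ cars behind it arrive at distinct later timestamps. So $\tau=d+\bar q+1$ forces $\pi_p+\bar q+(d-p)\leqslant d+\bar q+1$, i.e.\ $\pi_p\leqslant p+1$ for every $p$. Then $|\overline V(k)|\leqslant 1$ for every $k$, at most one car is ever held back, and nothing can be encoded.

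More fundamentally, without the RL the makespan is not where the difficulty lies. By Proposition~\ref{pro3}(iii), a feasible MB instance always attains $\underline\tau^e_{\pi_d}$. The decision question therefore collapses to whether a FIFO- and capacity-respecting car-to-lane assignment exists, and all the combinatorics must come from $\Pi^D$ and the buffer dimensions.

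Your indivisibility claim also has no mechanism behind it. Making a block's cars conflict with ``separator'' cars only keeps the separators out of that block's lane. The block's own cars are pairwise non-conflicting and can be spread over several lanes. With untyped 3-Partition items, no counting argument rules this out.

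\paragraph{How the paper closes it.} The paper reduces from N3DM instead and uses two simple devices.
\begin{itemize}
\item \emph{Simultaneous residency.} The last upstream car $d$ is made the first downstream car. Without the RL a car can only leave to $e$, so all other $nB$ cars are in the buffer when $d$ enters. Car $d$ needs a lane with nothing ahead of it, so with $l^f=n+1$ lanes of capacity $\bar q=B$, the remaining $n$ lanes must be exactly full.
\item \emph{Indivisibility.} Each $x_i$, $y_j$, $z_s$ becomes a block of consecutive cars kept in order internally. Blocks of the same type appear in reversed block order downstream, and the types are ordered $X,Y,Z$ in both sequences. Two blocks of the same type are then completely conflicting, so a lane holds cars of at most one $X$-block. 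Since $n$ nonempty $X$-blocks must fit into $n$ lanes, each occupies exactly one lane; likewise for $Y$ and $Z$.
\end{itemize}
Every full lane then certifies $x_i+y_j+z_s=B$. Conversely, a matching fills the lanes triple by triple and gives makespan $1+(2n+1)B$.

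\paragraph{Your fallback.} The permutation-graph route has the same hole as stated: $\bar q$ bounds simultaneous occupancy, not the total number of cars a lane serves. It becomes a workable alternative once you add the same car-$d$ gadget.

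\paragraph{NP membership.} Your ``idle waiting can be compressed'' bound is only heuristic once cars circulate through the RL; the paper's proof itself addresses hardness only. For the no-RL restriction, membership is immediate: a lane assignment is a certificate, and Proposition~\ref{pro3}(iii) fixes the makespan.
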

%%%%%\begin{proof}
    %We prove this theorem by reduction from an unrestricted 3-Partition problem.
%%%%%    See \ref{app1}.
%%%%%\end{proof}

To quantify the difficulty of changing the sequence from $\Pi^{U}$ into $\Pi^{D}$, we introduce a \textit{resequencing complexity index}, $\sigma$, based on the concept of \textit{conflicting} cars from \citet{guo2025logic}. Two cars $k_1$ and $k_2$ in $V$ are considered \textit{conflicting} if their order in $\Pi^{U}$ is reversed to that in $\Pi^{D}$, i.e., $k_1<k_2$ and $p_1>p_2$, or $k_1>k_2$ and $p_1<p_2$, $p_1$, $p_2\in P$. On this basis, the index $\sigma$ is defined as follows:
%Then, we define the index o1 as follows:

\begin{definition}\label{def1}
    %Let $\sigma_k$ be the number of cars in $\{k+1,k+2,\dots,d\}$ that conflict with car $k\in V$. The \textit{resequencing complexity index} $\sigma$ for transforming $\Pi^U$ into $\Pi^P$ is given by $\sigma=\sum_{k\in V}{\sigma_k}$.
    Let $\sigma_k$ be the number of \textit{conflicting} cars in $\{k+1,k+2,\dots,d\}$ with car $k\in V$. The \textit{resequencing complexity index} $\sigma$ for transforming $\Pi^U$ into $\Pi^P$ is given by $\sigma=\sum_{k\in V}{\sigma_k}$.
\end{definition}

%%%%As cars enter the downstream shop one after another, any delay in the entry of one car spreads to those behind it in $\Pi^D$, and then prolongs the makespan. Based on this delay propagation, Proposition \ref{pro2} provides a straightforward sufficient condition under which a car does not enter the RL in the optimal solution to the CRSP-MS.
As cars enter the downstream shop one after another, any delay in entry spreads to subsequent cars in $\Pi^D$ and then prolongs the makespan. Proposition \ref{pro2} provides a sufficient condition for a car not to enter the RL in an optimal CRSP-MS solution (see online supplement for the proof).
%Given this delay propagation, Proposition \ref{pro2} provides a straightforward sufficient condition \textcolor{red}{for a car not to enter the RL in an optimal CRSP-MS solution.}

%%%%%As cars arrive at the downstream shop sequentially, any delay in the arrival of one car spreads to its successors in $\Pi^D$, thereby prolonging the makespan. Based on this propagation of delays, Proposition \ref{pro1} offers a sufficient condition for cars not to enter the RL in the optimal solution.

\begin{proposition}\label{pro2}
    %%%%For any car $\pi_p$ in the downstream sequence $\Pi^D=[\pi_1,\pi_2,\dots,\pi_d]$, let $\pi(p)$ denote the maximum index of cars in the subsequence $[\pi_1,\pi_2,\dots,\pi_p]$, i.e., $\pi(p)=\max\{\pi_1,\pi_2,\dots,\pi_p\}$. If $\pi_p=\pi(p)$, then car $\pi_p$ does not enter the RL in the optimal solution.
    %%Consider car k1 in relation to the downstream sequence.
    %%%%%%%%%%%%%%%Refer to the downstream sequence $\Pi^{D}$ to consider car $\pi_{p}\in V$ ($p\in P$). Let $\pi(p)$ the maximum upstream index of cars in the downstream subsequence %be the car with the maximum upstream index in the downstream subsequence
    %%%%%%%%%%%%%%%$[\pi_1,\pi_2,\dots,\pi_p]$, i.e., $\pi(p)=\max\{\pi_1,\pi_2,\dots,\pi_p\}$. If $\pi_p=\pi(p)$, then car $\pi_p$ does not enter the RL in the optimal CRPS-MS solution and is called RL-unused.
    Consider car $\pi_{p}\in V$ in the downstream sequence $\Pi^{D}=[\pi_1,\dots,\pi_p,\dots,\pi_{d}]$. If $\pi_p$ has the largest upstream index among the first $p\in P$ cars in $[\pi_1,\pi_2,\dots,\pi_p]$, i.e., $\pi_p=\max\{\pi_1,\pi_2,\dots,\pi_p\}$, then there exists an optimal solution to the CRSP-MS in which car $\pi_p$ never enters the RL and is said to be RL-unused.
\end{proposition}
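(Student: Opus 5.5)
Let $k=\pi_p$ with $k=\max\{\pi_1,\dots,\pi_p\}$. The plan is an exchange argument: start from an optimal solution in which $k$ uses the RL and build another solution, no longer than the first, in which $k$ never enters the RL. Because $k$ is the largest index among the first $p$ downstream cars, every car that must exit before $k$ (namely $\pi_1,\dots,\pi_{p-1}$) has already left $s$ before $k$ does. Also, every car that exits after $k$ stays behind $k$ in the downstream order. So $k$ never has to wait in the buffer to let a car that arrives later overtake it. The RL detours of $k$ should therefore be replaceable by simply holding $k$ back at the upstream shop.

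Fix an optimal solution and let $t^e_k$ be the time $k$ arrives at $e$. In the optimal solution, $k$'s last passage through the forward lanes begins at some time $t'$ when it enters cell $c_{l1}$ of a forward lane $l$, either from $s$ or from $c_{\bar l1}$. From there it travels along lane $l$ and exits to $e$ at $t^e_k$. The modified schedule is as follows.

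\begin{enumerate}
\item Keep $k$ at $s$ until $t'-1$.
\item Dispatch $k$ from $s$ into $c_{l1}$ at exactly time $t'$, following the same trajectory in lane $l$ as before.
\item Remove all of $k$'s earlier occupancy of forward lanes and of the RL.
\end{enumerate}

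Deleting a car from cells can only lower occupancy, so capacity constraints remain satisfied. The FIFO order in each lane restricted to the other cars is unchanged, since removing one car from a FIFO queue preserves the relative order of the rest. The final pass of $k$ through lane $l$ is identical, so its position relative to the other cars in lane $l$ is also the same. Hence all cars reach $e$ at the same times and the makespan is unchanged.

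Two points still need checking. First, $k$ must be allowed to stay at $s$ that long. This requires that cars $k+1,\dots$ are not blocked: in $\Pi^U$ they must leave $s$ after $k$. So I will either adjust the model so that cars behind $k$ in $s$ shift, or argue via the structure of $\Pi^U$ that all cars $j>k$ left $s$ after $t'$ in some optimal solution. The second is the hard part. Every car $j>k$ has $p_j>p$, so it exits after $k$. I would try to postpone all $j>k$ that departed $s$ before $t'$ by re-routing them the same way: keep them at $s$ and insert them at their last forward-lane entry times. This uses the identical construction applied to $j$, but only for those $j$ whose last entry is after $t'$. Any $j>k$ whose last entry precedes $t'$ in a lane other than $l$ can keep its schedule shifted appropriately. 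Making this consistent is the main obstacle, and I would handle it by induction on the number of RL entries.

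Second, the entrance transfer car dispatches only one car per time unit. In the original schedule, $c_{l1}$ at time $t'$ received $k$, and the entrance transfer car moved $k$ at that time (from the RL or $s$). Replacing the source by $s$ therefore uses the same transfer slot, so there is no conflict, and the exit transfer car is never used by the deleted RL trips. Having removed all RL entries of $k$, the resulting optimal solution has $k$ RL-unused. If several cars satisfy the hypothesis, I would repeat the exchange for each, noting that it never adds RL entries for other cars.
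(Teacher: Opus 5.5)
Your route is essentially the paper's. The paper makes the same observation: every car leaving before $\pi_p$ has a smaller index, and every car with a larger index leaves after it. It then asserts that $\pi_p$ can ``delay its entry into the MB or remain in a forward lane''. Your steps 1--3 spell out the first option, and your local checks are correct: deleting $k$'s earlier occupancy cannot violate capacity or FIFO among the remaining cars, and the entry at $t'$ reuses the same entrance slot.

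The proof is not complete, though, and the missing piece is the one you flag yourself. Holding $k$ at $s$ until $t'-1$ violates the departure-order constraint \eqref{m1-con2} for every $j>k$ that left $s$ before $t'-1$ in the original schedule. Since $k$ looped at least once, $t'-1$ lies at least $2\bar{q}$ TUs after $k$'s original departure, so such cars will generally exist. Notice that steps 1--3 never use the hypothesis $k=\max\{\pi_1,\dots,\pi_p\}$; they work verbatim for $k=d$, where there is nothing to fix. The hypothesis can only do work through the cars $j>k$, all of which leave after $k$. So the open step is the whole content of the proposition, not a technicality. ``Adjusting the model'' is also not available, because $\Pi^U$ is a hard constraint.

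The repair you sketch does not work as stated, for three reasons.

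First, take a car $j>k$ that entered some lane $l''$ directly from $s$ at a time $a<t'$, possibly never using the RL. It has no later entry at which to ``insert'' it, so it needs a new entry time $a'\geqslant t'+1$. This can break FIFO: a car $w$ that entered $l''$ during $(a,a')$ was behind $j$ and left $l''$ after it, but would now be ahead of it. The entrance slot at $a'$ may also already be taken by a car coming from $s$ or from the RL. So $j$'s arrival time is not protected, and any delay propagates through \eqref{m1-con3} to every later downstream car and possibly to the makespan.

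Second, for the cars whose last entry is after $t'$, moving each to its own last-entry time can invert their mutual upstream order.

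Third, induction on the number of RL entries has no decreasing measure. The cars you must reroute need not use the RL at all, and delaying them recreates the same blocking problem for the cars behind them at $s$.

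What is needed is an actual rescheduling argument, or the choice of a specially structured optimal solution. It must show that the cars $j>k$ departing during $k$'s RL excursions can be accommodated without postponing any downstream arrival. The paper's other option, keeping $k$ in a forward lane from its first entry, avoids \eqref{m1-con2}. But it instead requires every car that enters that lane behind $k$ to leave it after $k$ does. That can fail, for instance, for a car $v<k$ with $p_v<p$ that re-enters that lane from the RL. The paper's proof asserts the transformation rather than constructing it, so it does not supply this step either.
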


\noindent Cars 4 and 5 in Figure \ref{fig1}(b) are RL-unused because $4=\pi_1=\max \{4\}$ and $5=\pi_4=\max\{4,3,2,5\}$. Let $V^{R}_{\times}=\{k\in V \mid \text{car } k \text{ is RL-unused.}\}$ denote the set of all RL-unused cars. By Proposition \ref{pro2}, $V^{R}_{\times}$ always contains the first car in the downstream sequence, i.e., $\pi_1\in V^{R}_{\times}$. %\noindent Cars 4 and 5 in Figure \ref{fig1}(b) can thus be RL-unused ($4=\pi_1=\max \{4\}$ and $5=\pi_4=\max\{4,3,2,5\}$). Let $V^{R}_{\times}=\{k\in V \mid \text{car } k \text{ is RL-unused.}\}$ denote the set of all RL-unused cars. It evidently includes the first car $\pi_1$ in $\Pi^{D}$, i.e., $\pi_1\in V^{R}_{\times}$.
%Moreover, the optimal departure and arrival timestamps from the upstream shop and at the downstream shop for specific cars can be derived as follows.
Moreover, the optimal departure timestamps from the upstream shop for cars in $\{1,2,\dots,\pi_1\}$ can be derived as follows.

%%%%%%%%%%According to Proposition \ref{pro1}, in the example shown in Figure \ref{fig4}, cars 4 and 5 do not enter the RL in the optimal solution. The following definition describes these cars as \textit{RL-unused}.

%%%%%%%%%%\begin{definition}\label{def2}
%%%%%%%%%%    A car is said to be RL-unused if it satisfies the condition %specified
%%%%%%%%%%    in Proposition \ref{pro1}.
    %A car is said to be RL-unused if it does not enter the RL in the optimal solution.
%%%%%%%%%%\end{definition}

%%%%%%%%%%Let $V^{R}_{\times}=\{k\in V \mid \text{car } k \text{ is RL-unused.}\}$ denote the set of RL-unused cars. Evidently, it includes the first car in $\Pi^D$. Furthermore, the departure and arrival timestamps of specific cars in the optimal solution can be determined (see \ref{app1} for the proof), as stated below.

\begin{corollary}\label{coro1}
    %%The first car in the downstream sequence, i.e., car $\pi_1$, is an RL-unused car. In the optimal solution, it departs from the upstream shop at timestamp $\pi_1-1$ and arrives at the downstream shop at timestamp $\pi_1+\bar{q}$.
    %The first car in the downstream sequence $\Pi^D$, i.e., car $\pi_1$, is RL-unused.
    In the optimal CRSP-MS solution, each car $k\in \{1,2,\dots,\pi_1\}$ departs from the upstream shop at timestamp $k-1$.%, and car $\pi_1$ reaches the downstream shop at timestamp $\pi_1+\bar{q}$.
\end{corollary}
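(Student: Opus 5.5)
Since the corollary says ``in the optimal solution,'' I will read it as: there exists an optimal solution in which cars $1,\dots,\pi_1$ leave $s$ at timestamps $0,1,\dots,\pi_1-1$. The plan is to prove a matching lower bound on the makespan and then show that dispatching these cars immediately never costs extra time.

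\emph{Step 1 (earliest departures).} Only one car per TU can leave $s$, and the upstream order is fixed with car 1 departing at timestamp $0$. By induction, car $k$ departs no earlier than $k-1$.

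\emph{Step 2 (earliest arrival of $\pi_1$).} A car departing $s$ at time $t$ spends one TU reaching $c_{l1}$ and $\bar q-1$ TUs moving to $c_{l\bar q}$. It then needs one more TU to reach $e$. So car $\pi_1$ reaches $e$ no earlier than $\pi_1-1+\bar q+1=\pi_1+\bar q$. Every car reaching $e$ must first pass through some $c_{l\bar q}$, and only one car per TU enters $e$. Hence the car at downstream position $p$ arrives no earlier than $\pi_1+\bar q+p-1$, and the makespan is at least $\pi_1+\bar q+d-1$ no matter what the other cars do.

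\emph{Step 3 (exchange argument).} Take any optimal solution and construct one with the same or smaller makespan in which cars $1,\dots,\pi_1$ depart at times $k-1$. By Proposition~\ref{pro2}, assume car $\pi_1$ is RL-unused. Cars $1,\dots,\pi_1-1$ all exit after $\pi_1$. The key observation is that, before $\pi_1$ exits, nothing needs to leave the forward lanes. Any solution can therefore be modified as follows. Dispatch the cars $1,\dots,\pi_1$ immediately, one per TU. Car $\pi_1$ goes into a lane that is empty at that moment, so it travels straight to $e$ and arrives at $\pi_1+\bar q$. Cars $k<\pi_1$ are placed in the forward lanes exactly as in the original solution, only earlier in time.

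\emph{Step 4 (main obstacle).} The difficulty is showing that the earlier dispatch does not break feasibility later. The concerns are lane capacity $\bar q$, the FIFO order, and the RL moves performed by the original solution. Shifting a car earlier within its lane only moves it toward the lane head, which is harmless only if no car ahead of it in that lane is blocked. A blocked car ahead would create a queue that could exceed the capacity.

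I would handle this by a time-shift argument. Let $\delta_k\ge 0$ be the delay of car $k$ relative to $k-1$ in the given solution. Replay the original schedule with the same lane assignments, but let each car advance through its lane as far as possible, waiting behind the car in front if necessary. A car in a forward lane can simply wait at $c_{l\bar q}$. So a car that arrives earlier only waits longer at the lane head, and its occupancy pattern is dominated by the original schedule once the original car would have entered. Every later event (exit to $e$, move to the RL) can then occur at its original timestamp or earlier, which keeps the makespan unchanged.

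The delicate part is the capacity bound during the first $\pi_1$ TUs. I would bound it by noting that dispatching first-come does not put more cars in a lane at time $t$ than the original schedule puts there by time $t+\max_k\delta_k$. Failing that, I would fall back on the stated cell-occupancy rule and a careful case analysis of which lane car $\pi_1$ uses.
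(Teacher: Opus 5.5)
Your reading of the corollary as an existence statement is right, and in fact necessary. Take $l^f=1$, $\bar q=2$, $\Pi^D=[4,3,2,1]$. The schedule with departures $0,1,5,9$ and forward-lane entries by cars $1,2,1,3,2,1,4,3,2,1$ at $t=1,2,5,6,7,9,10,11,12,13$ is feasible with makespan $15$. This is optimal: the lane and the RL form one FIFO loop, which forces car $1$ into at least four lane visits, so it reaches $e$ no earlier than $2+3\cdot 2\bar q+1=15$. Yet car $3$ leaves $s$ at $5$. Steps~1--2 are correct but say nothing about departure times, since the optimum generally exceeds $\pi_1+\bar q+d-1$ (here $9<15$). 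Everything therefore rests on Steps~3--4, and there the argument breaks in two places.

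\emph{First}, the claims that nothing needs to leave the forward lanes before $\pi_1$ exits, and that $\pi_1$ goes into an empty lane, are false whenever $\pi_1-1>(l^f-1)\bar q$. Lemma~\ref{lema1} permits this up to $\pi_1-1=\bar l\bar q-1$. In the example, immediate dispatch forces cars $1$ and $2$ into the RL at $t=3,4$. Car $4$ then enters behind car $3$ and reaches $e$ at $6$ only through a simultaneous rotation of the whole loop at $t=5$.

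\emph{Second}, the replay claim, that every event can keep its original timestamp or move earlier, fails for RL-to-lane re-entries. Under immediate dispatch the entrance transfer car is busy at every $t=1,\dots,\pi_1$. Re-entries that the original schedule placed in idle entrance slots before $\pi_1$'s original entry must therefore be postponed. That reverses their order relative to the advanced upstream cars in the same lane, breaking FIFO for the original assignment. It also keeps cars in the RL longer, so later lane-to-RL moves can hit the RL capacity. In the example, the optimal schedule's lane-entry sequence begins $1,2,1,3$, whereas every immediate-dispatch schedule begins $1,2,3,4$. Since the loop admits no overtaking, no order-preserving replay exists. An optimal immediate-dispatch schedule does exist there (entries $1,2,3,4,1,2,3,1,2,1$, car $4$ at $e$ at $6$, makespan $15$). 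But it is a structurally different schedule, not a time-shift of the original. Your capacity invariant (occupancy at $t$ bounded by the original occupancy by $t+\max_k\delta_k$) does not repair this, because lane capacity is a pointwise-in-time constraint. Moreover, the room a delayed car found in the original was typically created by a later move into the RL, and that move cannot be advanced.

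So the missing piece is the core of the proof, not a technicality. You would have to build the immediate-dispatch schedule afresh, e.g.\ by deferring the reordering that a delayed schedule performs before $\pi_1$ enters until after it has entered. You would then compare makespans through some invariant, such as lane-visit counts and the usage of the two transfer cars, rather than by replaying lane assignments. The paper does not supply this step either. It states the corollary without a separate proof, as a direct consequence of Proposition~\ref{pro2} and the preceding delay-propagation remark: $\pi_1$ cannot leave before $\pi_1-1$, nothing reaches $e$ before it, and any delay of $\pi_1$ propagates downstream. That reasoning tacitly assumes exactly the claim that early dispatch is harmless, which you flagged as the obstacle. As written, your proposal does not establish the statement.
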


Before solving the CRSP-MS, it is essential to verify whether $\Pi^{U}$ can be transformed into $\Pi^{D}$ using the provided buffer, which poses a \textit{resequencing feasibility problem} (RSFP). In this study, we examine the RSFP under a given MB-RL and a given MB. For the MB-RL case, Lemma \ref{lema1} first confirms a feasibility condition for moving a car to its target downstream position. By checking this condition for all cars, we develop a polynomial-time algorithm for the RSFP, detailed in the online supplement. For the MB, we prove that the RSFP is %strongly NP-hard
strongly NP-complete and can be addressed by solving the car-to-lane assignment problem of \citet{guo2025logic} (a mixed-integer programming model). Both their model and our NP-completeness %strong NP-hardness
proof are given in the online supplement. For clarity, Proposition \ref{pro1} summarizes these resequencing feasibility-checking methods by buffer type.

%Note that before solving the CRSP-MS, it is essential to verify whether $\Pi^{U}$ can be transformed into $\Pi^{D}$ using the given buffer, which defines the \textit{resequencing feasibility problem} (RSFP). In this study, we examine the RSFP with two buffer types: a given MB-RL and a given MB. Our polynomial-time algorithm, the proof of strong NP-hardness, and the assignment formulation of \citet{guo2025logic} are provided in the online supplement.

\begin{lemma}\label{lema1}
    Let $\overline{V}(k)=\{v\in V \mid v<k;p_v>p_k\in P\}$ denote the set of cars that must remain in the MB-RL when car $k\in V$ is about to enter from the upstream shop. It is feasible to move car $k\in V$ to its downstream position $p_k\in P$ via the MB-RL if and only if $|\overline{V}(k)|<\bar{l}\times \bar{q}$, where $\bar{l}$ is the total number of lanes and $\bar{q}$ is the lane capacity.
\end{lemma}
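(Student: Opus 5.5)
Necessity and sufficiency will be handled separately. The counting argument for necessity is short, and sufficiency comes from an explicit schedule that uses the RL as extra storage.

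\emph{Necessity.} Suppose car $k$ can be moved to position $p_k$. Every car $v\in\overline{V}(k)$ leaves the upstream shop before $k$, since $v<k$, and must reach the downstream shop after $k$, since $p_v>p_k$. So at the moment $k$ leaves $s$, all cars of $\overline{V}(k)$ are somewhere in $C$. When $k$ enters a forward lane, it also needs one free cell, namely $c_{l1}$ of some $l\in L^f$. The whole buffer has $\bar{l}\times\bar{q}$ cells, and the RL counts among them because cars parked there are still inside the MB-RL. Hence $|\overline{V}(k)|+1\le \bar{l}\bar{q}$, which is the strict inequality claimed. One subtlety must be checked here. Because the entrance transfer car places one car per time unit, a car re-entering from $c_{\bar{l}1}$ and car $k$ cannot both use the entrance at the same timestamp. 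I expect this only strengthens the requirement, so the count of cells should still be the binding constraint.

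\emph{Sufficiency.} Assume $|\overline{V}(k)|\le\bar{l}\bar{q}-1$. The plan is to build a schedule, focusing on $k$ and treating the cars that are not in $\overline{V}(k)$ as flexible. The cars preceding $k$ upstream that are not in $\overline{V}(k)$ must precede $k$ downstream. The cars following $k$ upstream can be held at $s$ until $k$ has reached $e$, unless they precede $k$ downstream.

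The schedule runs in four steps.
\begin{enumerate}
\item Feed cars $1,\dots,k-1$ in upstream order.
\item Using the RL loop, emit the cars with $p_v<p_k$ to $e$ in downstream order. This is possible because the forward lanes together with the RL allow any permutation of the cars stored in them by circulation, as noted in the introduction.
\item Park the cars of $\overline{V}(k)$ so that at least one cell remains free for $k$ to enter a forward lane, say lane 1.
\item Circulate the stored cars so that $k$ reaches the front of lane 1 and exits right after the last car with $p_v<p_k$.
\end{enumerate}
Cars that follow $k$ upstream but precede it downstream are admitted before $k$ and sorted in the same way, since they do not occupy cells at $k$'s entry time.

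The main obstacle is showing that circulation can realize an arbitrary reordering when the buffer is nearly full. With $\bar{l}\bar{q}-1$ cars stored, only one free cell remains, and the loop moves forward-lane $\bar{q}$-cells to the RL and $c_{\bar{l}1}$ back to $c_{l1}$. I would first prove a sub-claim: with at least one empty cell, any car can be brought to the exit of any forward lane while the relative order elsewhere is preserved. This works like a sliding puzzle on a cycle. A single lane together with the RL forms a cycle of length $2\bar{q}$, and with one hole, rotating it lets us bring any car to the front. Shuffles between lanes then let us pull a car from any lane into that cycle.

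If this general sliding argument gets messy, I would fall back to the case where only $k$'s own feasibility matters, as the lemma states. In that case it suffices to move $k$ into the cycle of lane 1 and the RL and rotate until all cars with $p_v<p_k$ have exited ahead of $k$.
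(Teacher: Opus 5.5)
Your necessity argument is correct. When $k$ reaches some $c_{l1}$, every car of $\overline V(k)$ has already left $s$ and has not yet reached $e$, so $|\overline V(k)|+1\le\bar l\bar q$. The transfer-car remark is irrelevant there, because extra restrictions can only shrink feasibility. The sufficiency half, however, has two genuine gaps.

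\textbf{Cars that follow $k$ upstream but precede it downstream.} Step 2 cannot be executed before $k$ enters. Any $w>k$ with $p_w<p_k$ is still at $s$, and every $u<k$ with $p_w<p_u<p_k$ must wait for $w$ downstream, so $u$ is still inside when $k$ enters. Your sentence that such $w$ are ``admitted before $k$'' contradicts $\Pi^U$. Moreover, each such $w$ enters while $k$ and all of $\overline V(k)$ are inside, so the buffer must then hold $|\overline V(k)|+2$ cars. The hypothesis on $k$ alone does not give this, and the per-car reading is in fact false. Take $\bar l=2$, $\bar q=1$, $\Pi^D=[3,1,2]$: then $\overline V(2)=\emptyset$, yet putting car $2$ at position $3$ requires car $3$ to overtake cars $1$ and $2$ with only two cells.

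What is provable, and what Proposition~\ref{pro1}(i) actually uses, is the aggregate claim: the instance is feasible iff $|\overline V(k)|<\bar l\bar q$ for every $k$. To prove sufficiency, use the greedy schedule that sends the next downstream car to $e$ whenever it is inside, and otherwise admits the next upstream car. When $k$ is admitted, the next downstream position $j$ satisfies $\pi_j\ge k$, and positions $1,\dots,j-1$ belong to cars $<k$. Hence $j=m_k:=\min_{w\ge k}p_w$, and the buffer holds exactly $\{v<k:p_v>m_k\}$. With $w^*=\pi_{m_k}$, this set and $\{k,\dots,w^*-1\}$ partition $\overline V(w^*)$. So $|\overline V(w^*)|-(w^*-k)\le\bar l\bar q-1$ cars are inside, and a cell is free. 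The hypothesis must be applied to $w^*$, not to $k$.

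\textbf{The circulation step.} In the tight case $|\overline V(k)|=\bar l\bar q-1$ with $\pi_{m_k}=k$, the buffer is completely full once $k$ is in $c_{l1}$. For $\bar q\ge 2$, the cells $c_{l2},\dots,c_{l\bar q}$ then hold cars of $\overline V(k)$ that must leave after $k$. Your sliding-puzzle sub-claim presupposes an empty cell, so it does not apply. Under purely sequential moves into empty cells, the instance would actually deadlock. For example, $l^f=1$, $\bar q=2$, $\Pi^D=[4,3,2,1]$ satisfies $\max_k|\overline V(k)|=3<4$, but nothing useful can move once car $4$ enters.

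The missing ingredient is that, in this model, a forward lane together with the RL may advance as a full cycle within one TU. In such a step each cell receives exactly one car, one car enters lane $l$ from $c_{\bar l1}$, and one leaves it for $c_{\bar l\bar q}$. This is compatible with \eqref{m1-con4}--\eqref{m1-con6} and matches the conveyor semantics. Rotating the cycle ``lane $l$ plus RL'' brings any designated car into lane $l$ (if it is in the RL), then to $c_{l\bar q}$, from where it exits to $e$. Whenever a cell is free, it can likewise be shifted to some $c_{l1}$ to admit the next upstream car. You need only this one-car-at-a-time access, not the unproven ``any permutation'' claim from the introduction.
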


\begin{proposition}\label{pro1}
    \textup{(i)} The RSFP with an MB-RL is solvable in $\mathcal{O}(d^2)$ time, where $d$ is the number of cars. \textup{(ii)} The RSFP with an MB is solvable via the assignment model of \citet{guo2025logic}.
\end{proposition}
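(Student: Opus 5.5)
Proposition \ref{pro1} has two parts, and I would prove them separately: part (i) reduces to Lemma \ref{lema1} plus a simple counting argument, and part (ii) needs an equivalence argument for the MB.

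\textbf{Part (i).} I would first show that the RSFP with an MB-RL is feasible if and only if every car $k\in V$ satisfies the condition of Lemma \ref{lema1}, namely $|\overline{V}(k)|<\bar{l}\times\bar{q}$.

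Necessity is immediate. If some car violates the condition, Lemma \ref{lema1} says that car cannot be moved to its downstream position, so no transformation of $\Pi^U$ into $\Pi^D$ exists.

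Sufficiency is the main obstacle. Lemma \ref{lema1} is stated for one car at a time, so I must show that the per-car conditions can be realized simultaneously by a single schedule. I would build the schedule greedily in upstream order and proceed by induction on $k$.

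When car $k$ arrives, the cars that must still be held are exactly those in $\overline{V}(k)$. Cars $v<k$ with $p_v<p_k$ are not included; they can either be released or kept ahead of $k$ without blocking it.

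The RL loop lets the cars inside any forward lane together with the RL be permuted arbitrarily, since cars can circulate. Hence the only binding resource is total capacity. The cars in $\overline{V}(k)$ fit into the $\bar{l}\bar{q}$ cells with at least one cell left free, and car $k$ can occupy that free cell. The loop structure then lets car $k$ be placed in front of the held cars, so it exits at the right moment.

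I would make this rigorous by checking that the configuration maintained after processing car $k$ meets the inductive hypothesis needed for car $k+1$. The delicate point is that the order of exits to $e$ must follow $\Pi^D$ exactly while cars circulate. Since the makespan is not being optimized here, I can let the entrance and exit transfer cars idle as long as needed, which removes all timing conflicts.

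\textbf{Complexity of part (i).} Computing $|\overline{V}(k)|$ for one car means scanning all $v<k$ and comparing $p_v$ with $p_k$, which takes $\mathcal{O}(d)$ time. Doing this for all $d$ cars gives $\mathcal{O}(d^2)$ in total, which establishes (i).

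\textbf{Part (ii).} Without the RL, each car enters exactly one forward lane and never changes lanes. A feasible resequencing is therefore characterized by two things:
\begin{itemize}
\item a car-to-lane assignment in which, within each lane, upstream order agrees with downstream order, so that FIFO is respected;
\item the requirement that at every moment, the number of cars held in each lane does not exceed $\bar{q}$.
\end{itemize}
These are precisely the constraints of the assignment model of \citet{guo2025logic}.

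So I would show both directions of the correspondence:
\begin{itemize}
\item Any feasible buffer schedule yields a feasible assignment, obtained by recording the lane each car uses.
\item Any feasible assignment can be executed by a schedule in which cars enter in $\Pi^U$ order, exit in $\Pi^D$ order, and the transfer cars wait whenever necessary. The capacity constraints of the model guarantee that this execution never overflows a lane.
\end{itemize}
Solving the model therefore decides the RSFP with an MB, which proves (ii).
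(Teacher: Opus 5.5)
Your skeleton matches the paper's. For (i), the paper applies Lemma~\ref{lema1} to every car by scanning all $v<k$, which is Algorithm~1 in the supplement and gives $\mathcal{O}(d^2)$. For (ii), it reads feasibility off the assignment model of \citet{guo2025logic}. The paper simply asserts that the per-car checks decide feasibility. You are right that a simultaneity argument is needed, but the one you sketch rests on a false claim.

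When car $k$ arrives, the cars that must still be held are \emph{not} exactly $\overline{V}(k)$. A car $v<k$ with $p_v<p_k$ cannot leave while some later car $w>k$, not yet arrived, must precede it downstream. For $\Pi^D=[3,1,2]$ you have $\overline{V}(2)=\emptyset$, yet car~1 must be in the buffer when car~2 enters. Your count (``the cars in $\overline{V}(k)$ fit with one cell left free'') ignores such cars, so the inductive step does not follow from the hypothesis at $k$.

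Under earliest release, the held set is $H(k)=\{v<k\mid p_v>\min_{w\geqslant k}p_w\}$. Let $w^*\geqslant k$ attain this minimum. Then every $v$ with $k\leqslant v<w^*$ has $p_v>p_{w^*}$, so $\overline{V}(w^*)=H(k)\cup\{k,\dots,w^*-1\}$. Hence $|H(k)|\leqslant|\overline{V}(w^*)|<\bar{l}\times\bar{q}$. In other words, you must invoke the condition at the later car $w^*$, not at $k$.

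The same $H(k)$ is what $\rho^k_v$ encodes in (ii). There, too, your execution must release each car as early as possible, rather than letting the exit transfer car wait ``whenever necessary''. Unnecessary waiting can overflow a lane that the model certifies as feasible.

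A second problem: the claim that the cars in a forward lane together with the RL ``can be permuted arbitrarily'' is false. Within one loop no car can overtake another, so their cyclic order never changes. What you need, and what does hold, is a random-access property: any stored car can be brought to $c_{l\bar{q}}$ of its lane by rotating that lane together with the RL, and then released next. Your sentence about placing car $k$ ``in front of the held cars'' should go, since cars in $H(k)\setminus\overline{V}(k)$ leave before $k$.

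You must also check random access when the buffer is completely full. This does happen: when $|H(k)|+1=\bar{l}\times\bar{q}$, car $k$ takes the last free cell. No car can then move alone. Movement is possible only because an entire full loop may advance one cell simultaneously, which the model permits (one forward-lane entry and one RL entry per TU). Letting the transfer cars idle creates no free cell, so it does not settle this.

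With random access established, use the greedy policy: release the next car of $\Pi^D$ as soon as it is stored, and otherwise admit the next upstream car. This keeps the buffer content equal to $H(k)$ just before each entry, and the bound above then yields sufficiency.
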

%The RSFP with an MB-RL is solvable in O(d²) time using our polynomial-time algorithm. The RSFP with an MB is solvable via the assignment model of Guo et al.

\subsection{Time-Space Network Representation}\label{sec3-2}
%\subsection{\textcolor{red}{Resequencing feasibility check}}\label{sec3-2}
This section further defines the CRSP-MS on a TSN graph, a common representation for scheduling problems that involve both temporal and spatial decisions \citep{hane1995fleet}. %\citep{yuan2017novel}.
In a TSN graph, each node represents a location at a given timestamp. An arc connects two nodes if movement between their locations is feasible within the corresponding time interval.

%To construct the TSN graph $\mathcal{G}=(\mathcal{N},\mathcal{A})$ for the CRSP-MS, let $\tau^m=\bar{q}+1$ be the direct travel time (in TUs) from the upstream shop ($s$) to the downstream shop ($e$), and $\tau^r=2\bar{q}$ that from $s$ to the leftmost cell $c_{\bar{l}1}$ of the RL.
To build the TSN graph $\mathcal{G}=(\mathcal{N},\mathcal{A})$ for the CRSP-MS, let $\tau^m=\bar{q}+1$ (in TUs) be the non-waiting travel time from the upstream shop ($s$) to the downstream shop ($e$) without the RL, and $\tau^r=2\bar{q}$ be that from $s$ to the leftmost cell $c_{\bar{l}1}$ of the RL on the first visit. The node set $\mathcal{N}$ is divided into four subsets: $\mathcal{N}=\mathcal{N}^{s}\cup \mathcal{N}^e\cup \mathcal{N}^{f}\cup \mathcal{N}^{r}$, to distinguish the type of location $i\in C$ where a car stays at a specific timestamp $t\in T$. \textit{Origin nodes} $\mathcal{N}^s=\{(s,t)\mid t=0,\dots,\tau-\tau^m\}$ correspond to $s$, \textit{destination nodes} $\mathcal{N}^e=\{(e,t)\mid t=\tau^m,\dots,\tau\}$ to $e$, \textit{FL nodes} $\mathcal{N}^{f}=\{(c_{lq},t)\mid l\in L^f,q\in Q,t=q,\dots,\tau-\tau^m+q\}$ to forward lanes, and \textit{RL nodes} $\mathcal{N}^r=\{(c_{\bar{l}q},t)\mid c_{\bar{l}q}\in C, q\in Q, t=\tau^r-q+1,\dots,\tau-\bar{q}-q\}$ to the RL.

The arc set $\mathcal{A}$ represents all possible location changes of a car from timestamps $t$ to $t+1$ ($t\in T\backslash\{\tau\}$). It consists of four types: \textup{(i)} \textit{pull-in arcs} $\mathcal{A}^s=\{(s,t;c_{l1},t+1)\mid (s,t)\in \mathcal{N}^s,(c_{l1},t+1)\in \mathcal{N}^f\}$ for entering a forward lane from $s$, \textup{(ii)} \textit{pull-out arcs} $\mathcal{A}^e=\{(c_{l\bar{q}},t-1;e,t\mid (c_{l\bar{q}},t-1)\in \mathcal{N}^f, (e,t)\in \mathcal{N}^e\}$ for exiting a forward lane to $e$, \textup{(iii)} \textit{waiting arcs} $\mathcal{A}^w=\{(c_{lq},t;c_{lq},t+1)\mid (c_{lq},t),(c_{lq},t+1)\in \mathcal{N}^f\cup \mathcal{N}^r\}$ for staying in the same cell within the MB-RL, and \textup{(iv)} \textit{moving arcs} $\mathcal{A}^m=\mathcal{A}^{mc}\cup \mathcal{A}^{fr}\cup \mathcal{A}^{rf}$ for cell-to-cell transitions. Specifically, $\mathcal{A}^{mc}=\{(c_{lq},t;c_{l,q+1},t+1)\mid (c_{lq},t),(c_{l,q+1},t+1)\in \mathcal{N}^f\}\cup \{(c_{\bar{l}q},t;c_{\bar{l},q-1},t+1)\mid (c_{\bar{l}q},t),(c_{\bar{l},q-1},t+1)\in \mathcal{N}^r\}$ %$\mathcal{A}^{mc}=\{(c_{lq},t;c_{l,q+1},t+1)\mid (c_{lq},t),(c_{l,q+1},t+1)\in \mathcal{N}^f\cup \mathcal{N}^r\}$
is for intra-lane movements, $\mathcal{A}^{fr}=\{(c_{l\bar{q}},t;c_{\bar{l}\bar{q}},t+1)\mid (c_{l\bar{q}},t)\in \mathcal{N}^f,(c_{\bar{l}\bar{q}},t+1)\in \mathcal{N}^r\}$ for transfers from a forward lane to the RL, and $\mathcal{A}^{rf}=\{(c_{\bar{l}1},t;c_{l1},t+1)\mid (c_{\bar{l}1},t)\in \mathcal{N}^r, (c_{l1},t+1)\in \mathcal{N}^f\}$ for transfers from the RL to a forward lane.

%%%%%To represent all possible changes in the location of a car occurring during one TU, the arc set A1 is divided as A1 = A11 + A12 + A13 + A14 to distinguish four types of possible movements.

%%%%%To construct the complete TSN graph G1 that fully represents the CRSP-MS, let t1 denote the travel time (in TUs) from the upstream shop (s) to the downstream shop, and t2 that from s to the leftmost cell c1 of the RL. By spatial location type, the node set N1 is partitioned into four subsets.

In $\mathcal{G}$, each \textit{path} from an \textit{origin node} to a \textit{destination node} defines a feasible trajectory for a car moving from the upstream to the downstream shop. The CRSP-MS is then equivalent to finding such a \textit{path} for each car in $V$ to minimize the arrival timestamp at the downstream shop of the last car $\pi_d$. In addition to following the given upstream and downstream sequences, the solution must adhere to %the MB-RL operational rules:
the FIFO rules, lane capacity limits, and restrict at most one car in (out) a forward lane at a time. As an example, using the MB-RL in Figure \ref{fig2}(a) to achieve the sequence change in Figure \ref{fig1}(b), Figure \ref{fig2}(b) shows the built $\mathcal{G}$ and a feasible solution with a makespan of 10 TUs.
%For the example in Figure \ref{fig2}(a), Figure \ref{fig2}(b) shows the built $\mathcal{G}$ and a feasible solution yielding a makespan of 10 TUs.

%Besides following the predefined upstream and downstream sequences ($\Pi^{U}$ and $\Pi^{D}$), the solution must adhere to the MB-RL operational rules: FIFO principles, lane capacity limits, and the restriction that at most one car may enter (exit) a forward lane at a time.

For each car in $V$, we define a binary decision variable for each arc in $\mathcal{A}$ to indicate whether the arc is part of the solution \textit{path} for that car. The CRSP-MS is then formulated as a multi-commodity network flow model (detailed in the online supplement). However, this compact formulation becomes computationally intractable for practical-scale instances (see Section \ref{sec5-2}). To handle larger-scale cases, we next develop an exact approach that integrates BP with CP.

%%%%%%%For each car in $V$, we define a binary decision variable for each arc in $\mathcal{A}$ to indicate whether the arc is part of the solution \textit{path} for that car. Using these variables, the CRSP-MS is formulated as a multi-commodity network flow model (detailed in the online supplement). As shown in Section \ref{sec5-2}, this compact formulation becomes computationally intractable for real-world-scale instances. To handle larger-scale cases, we develop an exact approach that integrates \textit{branch-and-price-and-cut} (BPC) with CP in the next section.

%%%%For each car $k\in V$, we define a binary decision variable for each arc in $\mathcal{A}$ to indicate whether the arc is part of the solution \textit{path} for car $k$. Using these variables, the CRPS-MS is formulated as a multi-commodity network flow model (detailed in the online supplement). %By solving this compact formulation, we obtain the optimal solution with a minimum makespan of 10 for the example in Figure 1(a), as illustrated in Figure 1(b).
%%%%However, results in Section 3.1 indicate that this formulation becomes computationally intractable for real-world cases. To handle larger-scale instances, we develop an exact approach that integrates branch-and-price-and-cut with constraint programming in the following section.

\section{A branch-and-price approach integrated with constraint programming}\label{sec4}
Our BP-CP approach has two stages. This section first explains its motivation and overall framework. Then, it details the components of each stage, the search for integer solutions, and the algorithmic enhancements, concluding with a complete implementation procedure.
%Our BP-CP approach comprises three stages. This section first explains its motivation and overall framework. Then, it details the components of each stage, the search for integer solutions, and the algorithmic enhancements, concluding with a full implementation procedure.

%%%%The BP-CP approach proposed in this section consists of three stages. We first explain the motivation behind it and outline its framework. Then, all its functional components are detailed, including formulations for each stage, search for integer solutions, and algorithmic enhancements. Finally, we conclude with how to implement the entire approach.

\subsection{Overview of the proposed approach}\label{sec41}
The BP framework is highly regarded as an exact method for solving problems defined on TSN graphs. In BP, each node in the \textit{branch-and-bound} (BB) tree is solved using CG. During each CG iteration, the \textit{restricted master problem} (RMP), a linear relaxation of the MP restricted to a subset of variables (i.e., columns), is solved first. Its dual solution is then used to solve \textit{pricing subproblems} (SPs) to identify new columns with negative reduced costs (for minimization problems). After adding these columns, the RMP is re-solved. This process repeats until strong duality conditions are met. Once CG ends, the current node is fully explored. If the RMP solution is integral, it provides a feasible solution to the original problem. Otherwise, branching is performed to generate new BB nodes in pursuit of integer solutions. The optimal solution is found after exploring all the BB nodes.

%%%%The BP and BPC methods are widely used for problems defined on TSN graphs \citep{zhang2024computing}. In BP, each node in the \textit{branch-and-bound} (BB) tree is solved using CG %\citep{lubbecke2005selected}
%%%%. During each iteration of CG, the RMP, a linear relaxation of the original problem but with only a subset of variables (i.e., columns), is solved first. Using its solution, \textit{pricing subproblems} (SPs) are then solved to identify new columns with negative reduced costs (for minimization problems). After adding these columns, the RMP is resolved. This process repeats until strong duality conditions are met, and BPC further enhances it with cutting planes \citep{costa2019exact}. Once CG ends, the current node is explored. If its optimal RMP solution is integral, it yields a feasible solution to the original problem. If not, branching is performed to create new BB nodes in pursuit of integer solutions. The optimal solution is obtained after exploring all the BB nodes.

%The BP or BPC methods are widely used for multi-commodity flow problems \citep{santini2018branch, zhang2024computing}. In BP, each node in the \textit{branch-and-bound} (BB) tree is solved through an iterative procedure called \textit{column generation} (CG) \citep{lubbecke2005selected}.

%However, the strong symmetries in the CRSP-MS makes it difficult to applying classical BP or BPC.
However, applying classical BP to the CRSP-MS is hindered by strong symmetries in the problem. The homogeneity of forward lanes, for example, allows cars to interchange their assigned lanes, resulting in many structurally distinct but makespan-identical columns. These columns lead to highly fractional RMP solutions, necessitating exhaustive branching to obtain integrality.

%%%%%%However, the bottleneck in applying classical BP or BPC to the CRSP-MS lies in the inherent problem symmetries, which induce numerous redundant columns generated and highly fractional RMP solutions. For example, due to the homogeneity of forward lanes and their internal cells, swapping the chosen forward lanes among cars or having a car wait in different cells of the same lane results in distinct but makespan-identical columns. These columns cause excessive RMP variables to take fractional values, necessitating exhaustive branching to find integer solutions.

To mitigate branching explosion caused by symmetries, we develop a two-stage BP-based method (i.e., BP-CP) that enforces integrality on only a subset of variables. Once they take integer values, they serve as input for a second-stage feasibility problem to find a solution to the original problem.
%To mitigate branching explosion from symmetries, we develop a three-stage BPC-based approach (i.e., BP-CP) that enforces integrality only on a subset of variables. Once they take integer values, they serve as input for a third-stage feasibility problem to find a solution to the original problem.

%%%To mitigate the branching explosion caused by symmetries, our BP-CP approach enforces integrality on only a subset of variables via the BB tree. The integer values of these variables then serve as input for a feasibility problem to find a feasible solution to the original problem.

%%%%%%%%%determines the specific forward lane entered by each car at each time step, along with corresponding entry and exit timestamps for all lanes

Specifically, in BP-CP, the BB node exploration proceeds in two stages: CG in the first stage ($\mathcal{S}_1$) and feasible-solution search in the second stage ($\mathcal{S}_2$). In $\mathcal{S}_1$, the RMP and SPs are solved iteratively. After CG converges, we check whether the variables related to the makespan and the number of times each car enters the RL are integers. If so, these values define an assignment problem in $\mathcal{S}_2$ that determines the specific forward lane each car enters at each time, along with the corresponding lane-entry and lane-exit timestamps. A feasible assignment yields a CRSP-MS solution and closes the node, whereas an infeasible one results in a feasibility cut for the MP and a restart of CG. If the relevant variables are not all integers, we branch from this node to create new child nodes.

Given that our assignment problem concerns time-related decisions, we model it as a CP feasibility problem. CP features rich variable types and constraints for more flexible modeling. The \textit{interval variables}, each defined by three integer attributes: \textit{start time}, \textit{end time}, and \textit{duration}, can be efficiently managed in scheduling problems using CP functions and filtering algorithms \citep{kasapidis2025unified}. In our case, the \textit{start} and \textit{end times} indicate when a car enters and leaves a lane.

%%%%%Given that our assignment problem in $\mathcal{S}_3$ concerns time-related decisions, we model it as a CP feasibility problem. CP features various variable and constraint types that allow for more flexible modeling and stronger inference techniques. The \textit{interval variables}, each defined by three integer attributes related to time: \textit{start time}, \textit{end time}, and \textit{duration} of the interval, find their particular use in scheduling problems \citep{kasapidis2025unified}. These variables capture temporal relations and can be efficiently managed using CP %ordering
%%%%%precedence constraints and filtering algorithms. In our context, the \textit{start time} and \textit{end time} indicate when a car enters and leaves a lane.

Unlike previous studies that use CP in BP to construct SPs \citep{hashemi2016constraint} or across all BP components such as CG and row generation \citep{zhang2024computing}, our BP-CP uses a CP model to find feasible solutions to the original problem. Its two-stage mechanism resembles the BP of \citet{lalonde2022branch} for a multiple knapsack problem. They require only the variables for item and bin assignments to be integers and, with these values, solve a feasibility problem that finalizes the item-to-knapsack assignment. We now detail each component of our approach.
\subsection{First Stage: Master Problem and Two Reduced Variants}\label{sec4-1}
Three MP formulations that use \textit{path}-based variables are proposed. The first, $\mathcal{M}_1$, is based on graph $\mathcal{G}$ introduced in Section \ref{sec3-2} and provides a complete formulation of the CRSP-MS. The other two, $\mathcal{M}_2$ and $\mathcal{M}_3$, are derived from $\mathcal{M}_1$ by relaxing selected constraints and are based on two types of compressed TSN graphs constructed for each car.

As preliminary steps, for each car $k\in V$, let $\tau^s_k=k-1$ be its earliest departure timestamp from the upstream shop and $\tau^e_k=\tau-(d-p_k)$ its latest arrival timestamp at the downstream. The maximum number of times it can enter the RL during resequencing, $\bar{n}_k$, is the largest integer $n$ such that $n\cdot\tau^r\leqslant\tau^e_k-\tau^s_k$. By Proposition \ref{pro2}, if car $k$ is RL-unused, then $\bar{n}_k=0$ and its TSN graphs contain no RL-related arcs. The set of all possible numbers of times car $k$ enters the RL is $N_k=\{0,1,\dots,\bar{n}_k\}$.
%If $\bar{n}_k=0$, then car $k$ never enters the RL, and its TSN graphs have no RL-related arcs. The set of all possible numbers of times car $k$ enters the RL is $N_k=\{0,1,\dots,\bar{n}_k\}$.

%%As a preliminary applicable to the graphs built for each car k1,

%%%%Then, $\mathcal{M}_1$ and its associated RMP are formulated using variables defined for the \textit{paths} of each car $k\in V$ in $\mathcal{G}$. To classify all \textit{paths} for car $k$, let $\mathcal{G}^1_k=(\mathcal{N}^1_k,\mathcal{A}^1_k)$ be the subgraph formed by nodes and arcs in $\mathcal{G}$ whose related timestamps lie between $\tau^s_k$ and $\tau^e_k$. The car-specific node set $\mathcal{N}^1_k$ and arc set $\mathcal{A}^1_k$ retain the same node and arc type as in $\mathcal{G}$, fully defined in the online supplement. For the example in Figure 1(b), Figure 3(a) shows the subgraph $\mathcal{G}^1_3$ for car 3, whose $\tau^s_3=2$ and $\tau^e_3=9$.
%As an example, Figure 3(a) shows the subgraph $\mathcal{G}^1_3$ of $\mathcal{G}$ for car 3 in Figure 1(b), whose $\tau^s_3=2$ and $\tau^e_3=9$.
%Figure 3(a) shows the subgraph of $\mathcal{G}$ for car 3 in Figure 1(b), i.e., $\mathcal{G}^1_3$.

%Then, $\mathcal{M}_1$ and its associated RMP are formulated using variables defined for the \textit{paths} of each car $k\in V$ in $\mathcal{G}$.
Then, $\mathcal{M}_1$ is formulated using variables defined for the \textit{paths} of each car $k\in V$ in $\mathcal{G}$. To identify all \textit{paths} for car $k$, define $\mathcal{G}^{1}_k=(\mathcal{N}^1_k,\mathcal{A}^1_k)$ as the subgraph of $\mathcal{G}$ with the nodes and arcs whose timestamps fall between $\tau^s_k$ and $\tau^e_k$. %with nodes and arcs between $\tau^s_k$ and $\tau^e_k$.
%let $\mathcal{G}^{1}_k=(\mathcal{N}^1_k,\mathcal{A}^1_k)$ be the subgraph of $\mathcal{G}$ containing nodes and arcs with timestamps between $\tau^s_k$ and $\tau^e_k$.
%let $\mathcal{G}^{1}_k=(\mathcal{N}^1_k,\mathcal{A}^1_k)$ be the subgraph formed by nodes and arcs in $\mathcal{G}$ with timestamps between $\tau^s_k$ and $\tau^e_k$.
The car-specific node set is: $\mathcal{N}^1_k=\mathcal{N}^s_k\cup \mathcal{N}^e_k\cup \mathcal{N}^{1f}_k \cup \mathcal{N}^{1r}_k$, where $\mathcal{N}^s_k=\{(s,t)\in \mathcal{N}^s\mid t = \tau^s_k,\dots,\tau^e_k-\tau^m\}$ represents \textit{origin nodes}, $\mathcal{N}^e_k=\{(e,t)\in \mathcal{N}^e\mid t=\tau^s_k+\tau^m,\dots,\tau^e_k\}$ \textit{destination nodes}, $\mathcal{N}^{1f}_k=\{(c_{lq},t)\in \mathcal{N}^f\mid q\in Q, t=\tau^s_k+q,\dots,\tau^e_k-(\tau^m-q)\}$ \textit{FL nodes}, and $\mathcal{N}^{1r}_k=\{(c_{\bar{l}q},t)\in \mathcal{N}^r\mid q\in Q, t=\tau^s_k + \tau^r-q+1,\dots,\tau^e_k-\bar{q}-q\}$ \textit{RL nodes}. Correspondingly, the four arc types are adapted into subsets $\mathcal{A}^s_k$, $\mathcal{A}^e_k$, $\mathcal{A}^w_k$, and $\mathcal{A}^m_k=\mathcal{A}^{mc}_k\cup \mathcal{A}^{fr}_k\cup \mathcal{A}^{rf}_k$, defined in detail in the online supplement, to form the arc set $\mathcal{A}^1_k$. For the same example used in Figure \ref{fig2}(b), the subgraph $\mathcal{G}^1_1$ for car 1, whose $\tau^s_1=0$ and $\tau^e_1=11$, is identical to $\mathcal{G}$ shown in that figure.
%For the same example used in Figure \ref{fig2}(b), the subgraph $\mathcal{G}^1_1$ for car 1, whose $\tau^s_1=0$ and $\tau^e_1=11$, is identical to $\mathcal{G}$ in Figure \ref{fig2}(b).
%For car 1 in Figure \ref{fig1}(b), with $\tau^s_1=0$ and $\tau^e_1=11$, its subgraph $\mathcal{G}^1_1$ is identical to $\mathcal{G}$ in Figure \ref{fig2}(b).
%As an example, Figure \ref{fig3}(a) shows the subgraph $\mathcal{G}^1_1$ for car 1 in Figure \ref{fig2}(a), whose $\tau^s_1=0$ and $\tau^e_1=11$.

%%Correspondingly, replacing original nodes sets used in the definition of arc sets of G1, the four arc types are specialized to car k1 as: N1 = N12 + N13 + N14 + N15.
%%With these definition of node sets, the four types of arcs are correspondingly specialized to car k1 as:

For each car $k\in V$, let $\Omega_k$ denote the set of feasible \textit{paths} in the TSN graph that enable it to travel from the upstream to the downstream shop. In $\mathcal{G}^1_k$, such a \textit{path} $r\in \Omega_k$ is a sequence of arcs starting with a \textit{pull-in arc} and ending with a \textit{pull-out arc}, characterized by five parameters: \textup{(i)} the departure timestamp $s^k_r$ from the upstream shop, \textup{(ii)} the arrival timestamp $t^k_r$ at the downstream shop, \textup{(iii)} a binary indicator $\rho^{kr}_{it}$ for whether node $(i,t)\in \mathcal{N}^1_k$ is visited under $r$, \textup{(iv)} a binary indicator $\Tilde{\rho}^{kr}_{it}=1$ if and only if node $(i,t)\in \mathcal{N}^1_k$ is visited under $r$ but not via a \textit{waiting arc}, and \textup{(v)} the number $\zeta^k_r$ of times car $k$ enters the RL under $r$. Let binary variable $x^k_r=1$ if and only if car $k$ selects \textit{path} $r\in \Omega_k$, and binary variable $z^k_n=1$ if it enters the RL exactly $n\in N_k$ times, and 0 otherwise. The formulation of $\mathcal{M}_1$ is written as: %\textcolor{red}{The first \textit{path}-based formulation of the CRSP-MS, i.e., $\mathcal{M}_1$, is written as:}%The first \textit{path}-based IP model for the CRSP-MS, i.e., $\mathcal{M}_1$, is formulated as follows.
%\vspace{-6mm}%The first \textit{path}-based IP model, which fully formulates the CRSP-MS, i.e., $\mathcal{M}_1$, is then given as follows.
\begin{subequations}\label{RMP1}
\begin{align}
  \label{m1-obj}(\mathcal{M}_1)\,\ \min&\sum_{r\in \Omega_{\pi_d}}{t^{\pi_d}_r x^{\pi_d}_r} \\%[-2pt]
        s.t.&\quad \nonumber \\
        \label{m1-con1}&\sum_{r\in \Omega_{k}}{x^k_r}=1,\,\ &&k\in V, \\%[-2pt]
        \label{m1-con2}&\sum_{r\in \Omega_{k}}{s^k_r  x^k_r}-\sum_{r\in \Omega_{k-1}}{s^{k-1}_r x^{k-1}_r}\geqslant 1, &&k\in V\backslash\{1\}, \\%[-12pt]
        \label{m1-con3}&\sum_{r\in \Omega_{\pi_{p+1}}}{t^{\pi_{p+1}}_r x^{\pi_{p+1}}_r}-\sum_{r\in \Omega_{\pi_{p}}}{t^{\pi_{p}}_r x^{\pi_{p}}_r}\geqslant 1, &&p\in P\backslash \{d\}, \\%[-3pt]
        \label{m1-con4}&\sum_{k\in V}{\sum_{r\in \Omega_k}{\rho^{kr}_{it} \cdot x^k_r}}\leqslant 1, &&(i,t)\in \mathcal{N}^{f}\cup \mathcal{N}^{r}, \\%[-3pt]
        \label{m1-con5}&\sum_{l\in L^f}{\sum_{k\in V}{\sum_{r\in \Omega_k}{\Tilde{\rho}^{kr}_{c_{l1},t} \cdot x^k_r}}}\leqslant 1, &&t\in \{1,\dots,\tau-\bar{q}\}, \\%[-3pt]
        \label{m1-con6}&\sum_{k\in V}{\sum_{r\in \Omega_{k}}{(\rho^{kr}_{et}+\Tilde{\rho}^{kr}_{c_{\bar{l}\bar{q}},t})\cdot x^{k}_{r}}}\leqslant 1,  &&t\in \{\tau^m,...,\tau \}, \\%[-3pt]
        \label{m1-con7}&\sum_{n\in N_k}{z^k_n}=1, &&k\in V, \\%[-3pt]
        \label{m1-con8}&\sum_{r\in \Omega_k}{\zeta^k_r\cdot x^k_r}-\sum_{n\in N_k}{n\cdot z^k_n}=0, &&k\in V, \\%[-3pt]
        \label{m1-con9}&\left[\texttt{Feasibility Cuts}\right], &&  \\%[-3pt]
        \label{m1-con10}&x^{k}_{r}, z^{k}_{n}\in \{0,1\}, &&r\in \Omega_k, n\in N_k, k\in V.
\end{align}
%%%%\setcounter{RMPone}{\value{equation}}
%record how many equations are put in this "align"
\end{subequations}

The objective function \eqref{m1-obj} minimizes the arrival timestamp of car $\pi_d$ at the downstream shop, i.e., the \textit{resequencing makespan}. Constraints \eqref{m1-con1} ensure that exactly one \textit{path} is selected per car.
Recall that upstream car $\pi_p\in V$ moves to downstream position $p\in P$, and $\pi_{p+1}\in V$ to $p+1\in P$. %Recall that upstream cars $\pi_p$ and $\pi_{p+1}$ move to downstream positions $p$ and $p+1$, respectively.
Constraints \eqref{m1-con2} and \eqref{m1-con3} require the cars to depart and arrive in the given upstream and downstream orders. Constraints \eqref{m1-con4} limit each \textit{FL} or \textit{RL node} in $\mathcal{G}$ to at most one visit to ensure that FIFO rules and lane capacity limits are respected. Constraints \eqref{m1-con5}/\eqref{m1-con6} enforce that at most one car can enter/exit a forward lane at a time. Constraints \eqref{m1-con7} require each car to enter the RL a certain number of times, with relevant variables linked by constraints \eqref{m1-con8}. As stated earlier, a feasibility cut is added to the MP if the integer values of partial variables are proven infeasible by solving the CP model in $\mathcal{S}_2$. Constraints \eqref{m1-con9} incorporate all such generated cuts, each introduced in Section \ref{sec4-3} following the CP model. Finally, constraints \eqref{m1-con10} impose binary requirements on decision variables.

Model $\mathcal{M}_1$ generally includes numerous variables as the graph expands. We consider its restricted version, in which the \textit{path} set $\Omega_k$ is replaced by a smaller subset $\Omega^{\prime}_k\subset \Omega_k$ for each car $k\in V$, and all integrality requirements on variables are removed. The resulting linear program (LP), denoted by $\mathcal{M}^{\text{LP}}_1$, is the first RMP in our BP-CP. It is solved repeatedly as new \textit{paths} with negative reduced costs are dynamically added to $\Omega^{\prime}_k$, until no such \textit{paths} remain.

%%Model $\mathcal{M}_1$ is regarded as the first so-called \textit{master problem} (MP), typically containing numerous variables as the graph size rapidly grows. By replacing the \textit{path} set $\Omega_k$ with a smaller one, $\Omega^{\prime}_k$, for each car $k\in V$, we get a restricted $\mathcal{M}_1$, i.e., the first RMP in our BP-CP. Its linear relaxation, from relaxing the integrality requirements, is denoted by $\mathcal{M}^{\text{LP}}_1$. During CG, $\mathcal{M}^{\text{LP}}_1$ is continually updated as new \textit{paths} with negative reduced costs are added to $\Omega_k^{\prime}$ ($\forall k\in V$) until no such \textit{paths} exist.

%%%%The model $\mathcal{M}_1$ is regarded as the first so-called \textit{master problem}(MP) in our approach. It typically contains an exponential number of variables (or columns) because the number of feasible \textit{paths} increases rapidly with graph size, while most variables remain 0 in the optimal solution. To address this issue, following the principle of BP methods, we consider a relaxed version of $\mathcal{M}_1$, where the \textit{path} set $\Omega_k$ for each car $k\in V$ is replaced by a smaller set $\Omega^{'}_k$, and the integrality requirements on the variables are relaxed. The resulting linear programming model, denoted by $\mathcal{M}^{\text{LP}}_1$, serves as the first RMP in our BP-CP. During CG, $\mathcal{M}^{\text{LP}}_1$ is iteratively updated as new paths with negative reduced costs are added to $\Omega^{'}_k$, until no such paths exist.

As shown in Section \ref{sec5-2}, $\mathcal{M}_1$ yields the tightest lower bound on the makespan by fully modeling the CRSP-MS. However, its use in BP-CP poses two challenges stemming from the structure of $\mathcal{G}$. The first is the strong symmetries inherent in $\mathcal{G}$, as described in Section \ref{sec41}. The second is that the performance of BP-CP with $\mathcal{M}_1$ degrades significantly as the problem size increases. Each \textit{FL} or \textit{RL node} in $\mathcal{G}$ induces a constraint of type \eqref{m1-con4}, so larger graphs add numerous such constraints, which slow down the solution of $\mathcal{M}^{\text{LP}}_1$ and the CG process.

To address these issues, we drop constraints \eqref{m1-con4} from $\mathcal{M}_1$ to speed up the solution of $\mathcal{M}^{\text{LP}}_1$, while accepting a weaker linear relaxation as a trade-off. This removal renders some \textit{FL} and \textit{RL nodes} in $\mathcal{G}$ redundant, allowing us to eliminate them, and \textit{paths} no longer need to track the exact position of a car at every timestamp. Building on this idea, we next introduce two reduced variants of $\mathcal{M}_1$, each based on a type of compressed TSN graph.

%%To address these issues, we consider dropping constraints \eqref{m1-con4} from $\mathcal{M}_1$ to speed up the solution of $\mathcal{M}^{\text{LP}}_1$, whereas accepting a weaker linear relaxation as a trade-off. This removal makes some \textit{FL} and \textit{RL nodes} in $\mathcal{G}$ become redundant, allowing them to be eliminated, and \textit{paths} no longer need to track the exact position of a car at every timestamp. Building on this idea, we next introduce two reduced variants of $\mathcal{M}_1$, each defined on a compressed TSN graph, along with their related RMPs.

%%%%%To tackle these challenges, we consider dropping constraints \eqref{m1-con4} from $\mathcal{M}_1$ to accelerate CG, while accepting a weaker linear relaxation as a trade-off. Removing these constraints eliminates the need for each \textit{path} to specify the exact position of a car at every timestamp, allowing arcs in $\mathcal{G}^1_k$ that represent consecutive movements of car $k\in V$ within a lane to be merged. The resulting graph is more compact and helps reduce problem symmetries. Building on this idea, we next present two reduced variants of $\mathcal{M}_1$, each based on a compressed TSN graph, and their related RMPs, to progressively address the symmetries from homogeneous lane cells and forward lanes.

%\subsubsection{First reduced RMP}
\subsubsection{First reduced MP}
The main idea behind the first reduction is to ignore detailed intra-lane car movements and to keep only the nodes and arcs that capture which lane each car enters each time, along with the corresponding lane entry and exit timestamps.
%The main idea behind the first reduction of $\mathcal{M}^1$ and the compression of $\mathcal{G}$ is to ignore detailed intra-lane car movements and to keep only nodes and arcs that capture which lane each car enters each time, along with the corresponding lane entry and exit timestamps.

%Let G2 denote the TSN graph compressed by G1 for each car k1 after the first reduction.
%We start by compressing $\mathcal{G}^1_k$ for each car $k\in V$. Let $\mathcal{G}^2_k=(\mathcal{N}^2_k,\mathcal{A}^2_k)$ denote the resulting TSN graph. Its node set $\mathcal{N}^2_k$ consists of four subsets: $\mathcal{N}^2_k=\mathcal{N}^s_k\cup \mathcal{N}^e_k\cup \mathcal{N}^{2F}_k\cup \mathcal{N}^{2R}_k$.
Let $\mathcal{G}^2_k=(\mathcal{N}^2_k,\mathcal{A}^2_k)$ denote the compressed TSN graph for each car $k\in V$ after the first reduction. The node set $\mathcal{N}^2_k$ consists of four subsets: $\mathcal{N}^2_k=\mathcal{N}^s_k\cup \mathcal{N}^e_k\cup \mathcal{N}^{2F}_k\cup \mathcal{N}^{2R}_k$. The \textit{origin nodes} $\mathcal{N}^s_k$ and \textit{destination nodes} $\mathcal{N}^e_k$ remain unchanged from $\mathcal{G}^1_k$ to record the upstream-departure and downstream-arrival timestamps. To identify which forward lane a car enters and its entry timestamp, the \textit{FL node} set $\mathcal{N}^{2F}_k=\bigcup_{l\in L^f}{\mathcal{N}^{2l}_k}$ only retains nodes in $\mathcal{G}^1_k$ associated with the leftmost cells of forward lanes, with subset $\mathcal{N}^{2l}_k=\{(c_{l1},t)\in \mathcal{N}^{1f}_k\}$ specific to forward lane $l\in L^f$. Likewise, the \textit{RL node} set $\mathcal{N}^{2R}_k=\{(c_{\bar{l}\bar{q}},t)\in \mathcal{N}^{1r}_k\}$ contains only nodes from $\mathcal{G}^1_k$ associated with the rightmost cell of the RL to capture the RL entry timestamp. The exit timestamp $t\in T$ from a forward lane or the RL can be derived by a node from $\mathcal{N}^e_k\cup \mathcal{N}^{2R}_k$ or $\mathcal{N}^{2F}_k$ with timestamp $t+1\in T$.

%With the compressed node set $\mathcal{N}^2_k$,
With $\mathcal{N}^2_k$, the \textit{pull-out}, \textit{waiting}, and \textit{moving arcs} in $\mathcal{G}^1_k$ can be merged into three macro-types, each describing a macro-movement of cars and further subdivided by specific forward lanes. %each representing a car macro-movement and further divided by a specific forward lane.
For each forward lane $l\in L^f$, a \textit{macro-pull-out arc} in $\mathcal{A}^{le}_k=\{(c_{l1},t;e,t^{\prime})\mid (c_{l1},t)\in \mathcal{N}^{2l}_k,(e,t^{\prime})\in \mathcal{N}^e_k, t^{\prime}-t\geqslant\bar{q}\}$ indicates a direct movement from its leftmost cell $c_{l1}$ to the downstream shop. An \textit{enter-RL arc} in $\mathcal{A}^{lR}_k=\{(c_{l1},t;c_{\bar{l}\bar{q}},t^{\prime})\mid (c_{l1},t)\in \mathcal{N}^{2l}_k, (c_{\bar{l}\bar{q}},t^{\prime})\in \mathcal{N}^{2r}_k, t^{\prime}-t\geqslant \bar{q} \}$ represents entering the rightmost cell $c_{\bar{l}\bar{q}}$ of the RL from the leftmost cell $c_{l1}$ of lane $l$. An \textit{exit-RL arc} in $\mathcal{A}^{Rl}_k=\{(c_{\bar{l}\bar{q}},t;c_{l1},t^{\prime})\mid (c_{\bar{l}\bar{q}},t)\in \mathcal{N}^{2r}_k, (c_{l1},t^{\prime})\in \mathcal{N}^{2l}_k, t^{\prime}-t\geqslant \bar{q}\}$ signifies exiting from the rightmost cell $c_{\bar{l}\bar{q}}$ of the RL into the leftmost cell $c_{l1}$ of lane $l$. Collecting these arcs over all forward lanes yields the \textit{macro-pull-out}, \textit{enter-RL}, and \textit{exit-RL arc} sets $\mathcal{A}^{2e}_k=\bigcup_{l\in L^f}{\mathcal{A}^{le}_k}$, $\mathcal{A}^{FR}_k=\bigcup_{l\in L^f}{\mathcal{A}^{lR}_k}$, and $\mathcal{A}^{RF}_k=\bigcup_{l\in L^f}{\mathcal{A}^{Rl}_k}$. %By combining these arc subsets across all forward lanes, the \textit{macro-pull-out}, \textit{enter-RL}, and \textit{exit-RL arc} sets $\mathcal{A}^{2e}_k=\bigcup_{l\in L^f}{\mathcal{A}^{le}_k}$, $\mathcal{A}^{FR}_k=\bigcup_{l\in L^f}{\mathcal{A}^{lR}_k}$, and $\mathcal{A}^{RF}_k=\bigcup_{l\in L^f}{\mathcal{A}^{Rl}_k}$ are obtained.
Along with the unchanged \textit{pull-in arcs} $\mathcal{A}^{s}_k$ from $\mathcal{G}^1_k$, these arcs form the arc set $\mathcal{A}^2_k$ of $\mathcal{G}^2_k$, given by: $\mathcal{A}^{2}_k=\mathcal{A}^{s}_k\cup \mathcal{A}^{2e}_k\cup \mathcal{A}^{FR}_k\cup \mathcal{A}^{RF}_k$. Figure \ref{fig3}(a) illustrates the $\mathcal{G}^2_1$ built for car 1 in Figure \ref{fig2}.
%%%For car 3 in Figure 1(a), Figure 2(b) shows the G2 constructed for it.

%%%By combining these arc subsets over all forward lanes, the \textit{macro-pull-out}, \textit{enter-RL}, and \textit{exit-RL arc} sets $\mathcal{A}^{2e}_k=\bigcup_{l\in L^f}{\mathcal{A}^{le}_k}$, $\mathcal{A}^{FR}_k=\bigcup_{l\in L^f}{\mathcal{A}^{lR}_k}$, and $\mathcal{A}^{RF}_k=\bigcup_{l\in L^f}{\mathcal{A}^{Rl}_k}$ are formed. Together with the unchanged \textit{pull-in arcs} $\mathcal{A}^{s}_k$ %$\mathcal{A}^{s}_k=\{(s,t;c_{l1},t+1)\mid (s,t)\in \mathcal{N}^s_k,(c_{l1},t+1)\in \mathcal{N}^{2f}_k\}$
%%%from $\mathcal{G}^1_k$, these arcs form the arc set $\mathcal{A}^{2}_k=\mathcal{A}^{s}_k\cup \mathcal{A}^{2e}_k\cup \mathcal{A}^{FR}_k\cup \mathcal{A}^{RF}_k$.

\begin{figure}[htbp]
    \centering

 \noindent\makebox[\linewidth][c]{
    \centering
    \scalebox{0.78}{
 % [inline block 1: 1 envs, 32013 chars -> data_tex | \begin{tikzpicture}[every node/.style={draw}, 		rotate border/.style={shape border uses incircle, shape border rotate=#1...]
}}

    \caption{Illustration of the First and the Second Compressed TSN Graphs for Car 1 in Figure \ref{fig2}. \label{fig3}}

\end{figure}

Based on $\mathcal{G}^2_k$, the feasible \textit{path} $r\in \Omega_k$ for each car $k\in V$ starts with a \textit{pull-in arc} in $\mathcal{A}^s_k$ and ends with a \textit{macro-pull-out arc} in $\mathcal{A}^{2e}_k$, with a series of \textit{enter-RL} and \textit{exit-RL arcs} in between. In addition to the parameters $s^k_r$, $t^k_r$, and $\zeta^k_r$ inherited from feasible \textit{paths} in $\mathcal{G}^1_k$, such a \textit{path} $r$ in $\mathcal{G}^2_k$ is further characterized by four binary indicators: \textup{(i)} $\rho^{kr}_{It}=1$ if car $k$ arrives at the leftmost cell of a forward lane at timestamp $t\in T$, \textup{(ii)} $\rho^{kr}_{Ot}=1$ if car $k$ arrives at the downstream shop or the rightmost cell of the RL at timestamp $t\in T$, \textup{(iii)} $\rho^{kr}_{Rt}=1$ if car $k$ is in the RL at timestamp $t\in T$, and \textup{(iv)} $\rho^{kr}_{lt}=1$ if car $k$ is in forward lane $l\in L^f$ at timestamp $t\in T$; otherwise, these indicators are 0. With these definitions and the same decision variables $x^k_r$ and $z^k_n$ as in $\mathcal{M}_{1}$, the first reduced MP, $\mathcal{M}_2$, is formulated as follows.
%the first reduced variant of $\mathcal{M}_{1}$, denoted by $\mathcal{M}_{2}$, is formulated as the following IP model.
%\vspace{-0.7cm}
\begin{subequations}\label{RMP2}
\begin{align}
  (\mathcal{M}_2)\,\ \min&\,\ \eqref{m1-obj} \nonumber \\%[-3pt]
        s.t.&\quad \nonumber \\
        &\,\ \eqref{m1-con1}-\eqref{m1-con3}, \eqref{m1-con7}-\eqref{m1-con10}, \nonumber \\%[-3pt]
        \label{m2-con2}&\,\ \sum_{k\in V}{\sum_{r\in \Omega_k}{\rho^{kr}_{It} \cdot x^k_r}}\leqslant 1, &&t\in \{1,\dots,\tau-\bar{q}\}, \\
        %\label{m2-con2}&\,\ \sum_{k\in V}{\sum_{r\in \Omega_k}{\rho^{kr}_{It} \cdot x^k_r}}\leqslant 1, \nonumber \\%[-15pt]
        %\label{m2-con2}&\qquad \qquad \qquad \quad \quad t\in \{1,\dots,\tau-\bar{q}\}, \\%[-3pt]
        \label{m2-con3}&\,\ \sum_{k\in V}{\sum_{r\in \Omega_k}{\rho^{kr}_{Ot} \cdot x^k_r}}\leqslant 1, &&t\in \{\tau^m,\dots,\tau\}, \\%[-3pt]
        \label{m2-con4}&\,\ \sum_{k\in V}{\sum_{r\in \Omega_k}{\rho^{kr}_{Rt}\cdot x^k_r}}\leqslant \bar{q}, &&t\in \{\tau^m,\dots,\tau-\tau^m\}, \\
        \label{m2-con5}&\,\ \sum_{k\in V}{\sum_{r\in \Omega_k}{\rho^{kr}_{lt}\cdot x^k_r}}\leqslant \bar{q}, && l\in L^f, t\in \{1,\dots,\tau-1\}.
\end{align}
%%%%%\setcounter{RMPone}{\value{equation}}
%record how many equations are put in this "align"
\end{subequations}

Constraints \eqref{m2-con2}/\eqref{m2-con3} correspond to constraints \eqref{m1-con5}/\eqref{m1-con6} in $\mathcal{M}_1$, ensuring that at most one car reaches/leaves a forward lane per timestamp. Constraints \eqref{m2-con4}/\eqref{m2-con5} impose lane capacity limits on the RL/each forward lane such that the number of cars in that lane never exceeds $\bar{q}$. As a relaxation of $\mathcal{M}_1$, $\mathcal{M}_2$ does not consider FIFO rules. The LP of $\mathcal{M}_2$ defined over a small set $\Omega^{\prime}_k\subset \Omega_k$ for each car $k\in V$, serves as the first reduced RMP (also the second RMP) in our BP-CP, denoted by $\mathcal{M}^{\text{LP}}_2$.
%%As a relaxation of $\mathcal{M}_1$, $\mathcal{M}_2$ does not enforce FIFO rules when cars leave lanes. Similarly, the LP of $\mathcal{M}_2$ defined over a small set $\Omega^{\prime}_k\subset \Omega_k$ for each car $k\in V$ serves as the first reduced RMP (also the second RMP) in our BP-CP, denoted by $\mathcal{M}^{\text{LP}}_2$. Similarly, the restricted $\mathcal{M}_2$ using a smaller \textit{path} set $\Omega^{\prime}_k$ for each car $k\in V$ serves as the first reduced RMP (also the second RMP) in our BP-CP. Its linear relaxation without integrality requirements on variables is denoted by $\mathcal{M}^{\text{LP}}_2$.

%%%Corresponding to constraints \eqref{m1-con5} and \eqref{m1-con6} in $\mathcal{M}_1$, constraints \eqref{m2-con2} and \eqref{m2-con3} in $\mathcal{M}_2$ ensure that at most one car can enter and leave a forward lane per timestamp, respectively. Instead of incorporating constraints \eqref{m1-con4}, lane capacity is enforced in $\mathcal{M}_2$ through constraints \eqref{m2-con4} and \eqref{m2-con5}, which require that the number of cars in any lane never exceeds $\bar{q}$. As a relaxation of $\mathcal{M}_1$, $\mathcal{M}_2$ allows cars to ignore FIFO rules when leaving lanes.

%%%By replacing $\Omega_k$ with its smaller subset $\Omega^{'}_k$ and dropping the integrality constraints \eqref{m1-con10}, we obtain the first reduced RMP, which is also the second RMP in our BP-CP, denoted by $\mathcal{M}^{\text{LP}}_2$.

%\subsubsection{Second reduced RMP}
\subsubsection{Second reduced MP}
We further reduce $\mathcal{M}_2$ to mitigate symmetry caused by homogeneous forward lanes. The idea is to track only whether and when each car $k\in V$ enters and exits the RL, without distinguishing the specific forward lanes it enters. From the RL entry and exit timestamps, we can infer when the car departs from the preceding forward lane and arrives at the subsequent one. As a result, all nodes and arcs related to forward lanes can be removed from $\mathcal{G}^2_k$.

%We further reduce $\mathcal{M}_2$ and compress $\mathcal{G}^2_k$ for each car $k\in V$ to mitigate the symmetry effects caused by homogeneous forward lanes. The idea is to focus only on whether and when car $k$ enters and exits the RL, without distinguishing which forward lanes it enters. Once the timestamps for entering and exiting the RL are known, the departure timestamp from the preceding forward lane and the arrival timestamps at the subsequent forward lane can be inferred. As a result, it is possible to remove all nodes and arcs related to forward lanes from $\mathcal{G}^2_k$.

Let $\mathcal{G}^3_k=(\mathcal{N}^3_k,\mathcal{A}^3_k)$ denote the second compressed TSN graph for car $k\in V$. The node set $\mathcal{N}^3_k$ retains all \textit{origin nodes} $\mathcal{N}^s_k$, \textit{destination nodes} $\mathcal{N}^e_k$, and \textit{RL-nodes} $\mathcal{N}^{2R}_k$ from the original $\mathcal{G}^2_k$. Beyond these, it features those \textit{RL-nodes} from $\mathcal{G}^1_k$ associated with the leftmost cell of the RL to indicate car $k$ exiting the RL, %exiting the RL for car $k$,
called \textit{RL-left nodes} $\mathcal{N}^{3R}_k=\{(c_{\bar{l}1},t)\in \mathcal{N}^{1r}_k\}$. Formally, $\mathcal{N}^3_k=\mathcal{N}^s_k\cup \mathcal{N}^e_k\cup \mathcal{N}^{2R}_k\cup \mathcal{N}^{3R}_k$.

%%%%Let $\mathcal{G}^3_k=(\mathcal{N}^3_k,\mathcal{A}^3_k)$ denote the second compressed TSN graph for car $k$. The node set $\mathcal{N}^3_k$ includes all \textit{origin} and \textit{destination nodes} ($\mathcal{N}^s_k$ and $\mathcal{N}^e_k$) from graph $\mathcal{G}^1_k$ (or $\mathcal{G}^2_k$). Besides these, other nodes in $\mathcal{N}^3_k$ are divided into two subsets to identify when car $k$ enters and exits the RL:

%%%%\begin{itemize}
%%%%    \item \textit{RL-last nodes} $\mathcal{N}^{3R\bar{q}}_{k}=\{(c_{\bar{l}q},t)\in \mathcal{N}^{1R}_k\mid q=\bar{q}\}$: indicate entry into the last cell $c_{\bar{l}\bar{q}}$ of the RL.

%%%%    \item \textit{RL-first nodes} $\mathcal{N}^{3R1}_{k}=\{(c_{\bar{l}q},t)\in \mathcal{N}^{1R}_k\mid q=1\}$: indicate exit from the first cell $c_{\bar{l}1}$ of the RL.

    %\item $\mathcal{N}^{R}_{1k}=\{(c_{\bar{l}1},t)\in C\times T | t=\tau^s_k+\tau^r, \dots, \tau^e_k-\tau^d\}$: set of \textit{first-cell nodes} in $\mathcal{G}^3_k$.

    %\item $\mathcal{N}^R_{\bar{q}k}=\{(c_{\bar{l}\bar{q}},t)\in C\times T | t=\tau^s_k+\tau^d, \dots, \tau^e_k-\tau^r\}$: set of \textit{last-cell nodes} in $\mathcal{G}^3_k$.
%%%%\end{itemize}

%With $\mathcal{N}^3_k$, the arc set $\mathcal{A}^3_k$ models the upstream-to-downstream movement of car $k$ using the number of RL entries and \textcolor{red}{the associated RL entry and exit timestamps.}
With $\mathcal{N}^3_k$, the arc set $\mathcal{A}^3_k$ models the upstream-to-downstream movement of car $k$ using the number of times it enters the RL and the associated RL entry and exit timestamps. Specifically, a feasible \textit{path} $r\in \Omega_k$ for car $k$ in $\mathcal{G}^3_k$ has two forms. One is a single \textit{through-arrive arc} $\mathcal{A}^{se}_k=\{(s,t;e,t^{\prime})\mid (s,t)\in \mathcal{N}^s_k,(e,t^{\prime})\in \mathcal{N}^e_k,t^{\prime}-t\geqslant \tau^m\}$, representing a direct arrival at the downstream shop without passing through the RL. The other is a sequence of consecutive arcs indicating an arrival after using the RL. The other is a sequence of consecutive arcs indicating an arrival after using the RL. The latter includes \textit{use-RL arcs} $\mathcal{A}^{sR}_k=\{(s,t;c_{\bar{l}\bar{q}},t^{\prime})\mid (s,t)\in \mathcal{N}^s_k,(c_{\bar{l}\bar{q}},t^{\prime})\in \mathcal{N}^{2R}_k,t^{\prime}-t\geqslant \tau^m\}$ for initial entry into the RL from the upstream shop, \textit{re-RL arcs} $\mathcal{A}^{RR}_k=\{(c_{\bar{l}1},t;c_{\bar{l}\bar{q}},t^{\prime})\mid (c_{\bar{l}1},t)\in \mathcal{N}^{3R}_k,(c_{\bar{l}\bar{q}},t^{\prime})\in \mathcal{N}^{2R}_k,t^{\prime}-t\geqslant \tau^m\}$ for re-entry into the RL after moving through a forward lane, \textit{transit-arrive arcs} $\mathcal{A}^{Re}_k=\{(c_{\bar{l}1},t;e,t^{\prime})\mid (c_{\bar{l}1},t)\in \mathcal{N}^{3R}_k,(e,t^{\prime})\in \mathcal{N}^e_k,t^{\prime}-t\geqslant \tau^m\}$ for final downstream-arrival after the last use of the RL, and \textit{RL-moving arcs} $\mathcal{A}^{Rm}_k=\{(c_{\bar{l}\bar{q}},t;c_{\bar{l}1},t^{\prime})\mid (c_{\bar{l}\bar{q}},t)\in \mathcal{N}^{2R}_k,(c_{\bar{l}1},t^{\prime})\in \mathcal{N}^{3R}_k,t^{\prime}-t\geqslant \bar{q}-1\}$ for movement within the RL.
%The latter includes \textit{use-RL arcs} $\mathcal{A}^{sR}_k=\{(s,t;c_{\bar{l}\bar{q}},t^{\prime})\mid (s,t)\in \mathcal{N}^s_k,(c_{\bar{l}\bar{q}},t^{\prime})\in \mathcal{N}^{2R}_k,t^{\prime}-t\geqslant \tau^m\}$ for initial entry into the RL from the upstream shop, \textit{re-RL arcs} $\mathcal{A}^{RR}_k=\{(c_{\bar{l}1},t;c_{\bar{l}\bar{q}},t^{\prime})\mid (c_{\bar{l}1},t)\in \mathcal{N}^{3R}_k,(c_{\bar{l}\bar{q}},t^{\prime})\in \mathcal{N}^{2R}_k,t^{\prime}-t\geqslant \tau^m\}$ for re-entry into the RL after moving through a forward lane, \textit{transit-arrive arcs} $\mathcal{A}^{Re}_k=\{(c_{\bar{l}1},t;e,t^{\prime})\mid (c_{\bar{l}1},t)\in \mathcal{N}^{3R}_k,(e,t^{\prime})\in \mathcal{N}^e_k,t^{\prime}-t\geqslant \tau^m\}$ for final arrival after using the RL, and \textit{RL-moving arcs} $\mathcal{A}^{Rm}_k=\{(c_{\bar{l}\bar{q}},t;c_{\bar{l}1},t^{\prime})\mid (c_{\bar{l}\bar{q}},t)\in \mathcal{N}^{2R}_k,(c_{\bar{l}1},t^{\prime})\in \mathcal{N}^{3R}_k,t^{\prime}-t\geqslant \bar{q}-1\}$ for movement within the RL. These arcs together define $\mathcal{A}_3=\mathcal{A}^{se}_k\cup \mathcal{A}^{sR}_k\cup \mathcal{A}^{RR}_k \cup \mathcal{A}^{Re}_k \cup \mathcal{A}^{Rm}_k$.
Correspondingly, Figure \ref{fig3}(b) illustrates the $\mathcal{G}^3_1$ built for the same car 1 in Figure \ref{fig2}.

Omitting all forward-lane structures in $\mathcal{G}^3_k$ graphs allows us to remove constraints \eqref{m2-con5} related to them from $\mathcal{M}_2$. The resulting formulation is $\mathcal{M}_3$, together with its linear relaxation defined on set $\Omega^{\prime}_{k}\subset \Omega_k$ ($\forall k\in V$). %The resulting model $\mathcal{M}_3$ and its linear relaxation with set $\Omega^{\prime}_{k}\subset \Omega_k$ ($\forall k\in V$) are the third MP and RMP in our BP-CP, respectively.
Because $\mathcal{M}_3$ excludes both FIFO and forward-lane capacity constraints, it yields the weakest lower bound for the CRSP-MS. Nonetheless, it has the shortest solution time and reduces computational effort at each BB node, as shown in Section \ref{sec5-2}.

For each RMP, the set $\Omega^{\prime}_{k}$ ($\forall k\in V$) initially contains a few feasible \textit{paths} in the corresponding TSN graph. During CG, it repeatedly incorporates new \textit{paths} that can lower the objective value until none exist. We next outline how to find these \textit{paths} using \textit{pricing} SPs.

\subsection{First Stage: Pricing Subproblems}\label{sec4-2}
At each CG iteration, $d$ \textit{pricing} SPs are solved using the dual-variable values from the current RMP solution. Each SP, associated with a car $k\in V$ and denoted by SP$_k$, is a \textit{shortest-path problem} on the TSN graph for that car. It seeks a \textit{path} with the minimum reduced cost from an \textit{origin node} in $\mathcal{N}^s_k$ to a \textit{destination node} in $\mathcal{N}^e_k$. If this \textit{path} has a negative cost, it is added to set $\Omega_k^{\prime}$. When no such \textit{paths} with negative reduced costs are found for any car in $V$, the CG completes with the optimal RMP solution. For all three RMP versions and their respective TSN graphs for car $k$, the reduced-cost calculation for a feasible \textit{path} in $\Omega_k$ is given in the online supplement.

To prepare for solving each SP$_k$, we add a dummy \textit{source node} connected to all \textit{origin nodes} and a dummy \textit{terminal node} connected to all \textit{destination nodes} with zero-cost arcs in every TSN graph for car $k$. The SP$_k$ is then equivalent to finding the shortest \textit{path} from the \textit{source} to the \textit{terminal}.

%%%%For each version of the RMP and its corresponding TSN graph for car $k\in V$, the calculation of the reduced cost for any \textit{path} $r\in \Omega_k$ is provided in the online supplement. To prepare for solving SP$_k$, we add a dummy \textit{source node} connected to all \textit{origin nodes} and a dummy \textit{terminal node} connected to all \textit{destination nodes} with zero-cost arcs in all three graph types for car $k$. Then, solving SP$_k$ becomes finding the shortest \textit{path} from the \textit{source} to the \textit{terminal node}.

In support of branching on the number of times car $k\in V$ enters the RL (see Section \ref{sec4-4}), we solve each SP$_k$ on three TSN graphs using a \textit{labeling algorithm}. This algorithm produces $\bar{n}_k+1$ shortest \textit{paths} from the \textit{source} to the \textit{terminal}, recalling that $\bar{n}_k$ is the maximum number of times car $k$ can enter the RL during resequencing. The $n$-th ($0\leqslant n\leqslant\bar{n}_k$) \textit{path} is the shortest among those along which car $k$ enters the RL exactly $n$ times. Details of this \textit{labeling algorithm} for our classical \textit{shortest-path} SP$_k$ are provided in the online supplement.

%%%In support of branching on the number of times car $k\in V$ enters the RL (as introduced in Section \ref{sec4-4}), we solve each SP$_k$ on three TSN graphs using a \textit{labeling algorithm}. This algorithm produces $\bar{n}_k+1$ shortest \textit{paths} from the \textit{source} to the \textit{terminal}, recalling that $\bar{n}_k$ is the maximum number of times car $k$ can enter the RL during resequencing. The $n$-th ($0\leqslant n\leqslant\bar{n}_k$) \textit{path} is the shortest among those along which car $k$ enters the RL $n$ times. %The $n$-th ($0\leqslant n\leqslant\bar{n}_k$) shortest \textit{path} indicates that car $k$ enters the RL $n$ times on that specific \textit{path}.
%%%Because our \textit{labeling algorithm} is designed for a classic \textit{single-source-single-sink shortest-path problem}, we detail it in the online supplement.

%%%After running the \textit{labeling algorithm} for each car $k\in V$, if the \textit{paths} obtained satisfy the branching rules in Section \ref{sec4-4} applied to the current BB node and have negative reduced costs, they are added to set $\Omega^{\prime}_k$. If there exists an updated $\Omega^{\prime}_k$ for a car $k\in V$, the RMP is then resolved; otherwise, the CG iterative process between $\mathcal{S}_1$ and $\mathcal{S}_2$ terminates.

After running the \textit{labeling algorithm} for each car $k\in V$, the \textit{paths} that have negative reduced costs and satisfy the branching rules in Section \ref{sec4-4} of the current BB node are added to set $\Omega^{\prime}_k$. If there exists an updated $\Omega^{\prime}_k$ for a car $k\in V$, the RMP is then solved again; otherwise, the CG process terminates.

%%%%Accounting for arc availability affected by branching rules, we perform a depth-first search to calculate the in-degree of each node before applying the \textit{labeling algorithm}. If the \textit{terminal} has zero in-degree, i.e., it is unreachable from the \textit{source}, then SP$_k$ is confirmed infeasible, allowing us to prune the current BB node.

\subsection{Acquisition of partial integer solution}\label{sec4-4}
%\subsection{Branching scheme}\label{sec4-4}
Upon completion of CG, we require integrality only for the RMP variables related to the makespan and the number of times each car enters the RL. The makespan integrality is ensured by removing \textit{incompatible paths} from set $\Omega^{\prime}_{\pi_d}$ of the last car $\pi_d$ and disabling the relevant arcs in its TSN graphs. For RL-entry counts, integrality is maintained through two branching rules.

%%%For the RMP solution at the end of the $\mathcal{S}_1$-$\mathcal{S}_2$ iteration, our BP-CP approach only requires the variables related to the makespan and the number of times each car enters the RL to be integers. If not, we enforce makespan integrality by removing incompatible \textit{paths} from $\Omega^{\prime}_{\pi_d}$ of the last car $\pi_d$ and marking relevant arcs in its TSN graphs as inaccessible. The integrality of the RL-entry counts is ensured through two branching rules.
%If either of these is fractional

\begin{itemize}
    %\item \textit{Removing incompatible columns and arcs.}
    \item \textit{Removing incompatible \textit{paths} and arcs.} The optimal RMP objective value $\widehat{m}$ implies a lower bound $\lceil \widehat{m}\rceil$ on the makespan. It follows that any \textit{paths} in $\Omega^{\prime}_{\pi_d}$ leading car $\pi_d$ to the downstream shop before $\lceil \widehat{m}\rceil$ are \textit{incompatible} and should be removed. To prevent their regeneration, downstream-arrival arcs $(i,t;e,t^{\prime})$ with $t^{\prime}<\lceil \widehat{m}\rceil$ are marked as inaccessible in the TSN graphs of car $\pi_d$, including \textit{pull-out arcs} in $\mathcal{G}^1_{\pi_d}$, \textit{macro-pull-out arcs} in $\mathcal{G}^2_{\pi_d}$, as well as \textit{through-arrive} and \textit{transit-arrive arcs} in $\mathcal{G}^3_{\pi_d}$. After these updates, CG restarts at the current BB node.

    %%%The optimal RMP objective value $\widehat{m}$ implies that the minimum makespan is at least $\lceil \widehat{m}\rceil$. Therefore, any \textit{paths} in $\Omega^{'}_{\pi_d}$ that lead car $\pi_d$ to the downstream shop before $\lceil \widehat{m}\rceil$ are \textit{incompatible} and should be removed from $\Omega^{'}_{\pi_d}$. To prevent their regeneration in $\mathcal{S}_2$, we mark the arrival-related arcs of the form $(i,t;e,t^{\prime})$, where $t^{\prime}<\lceil \widehat{m}\rceil$, as inaccessible in the TSN graphs of car $\pi_d$. %Specifically, these involve \textit{pull-out arcs} in $\mathcal{G}^{1}_{\pi_d}$, \textit{macro-pull-out arcs} in $\mathcal{G}^2_{\pi_d}$, as well as \textit{through-arrive} and \textit{transit-arrive arcs} in $\mathcal{G}^3_{\pi_d}$.
    %%%After these updates, the $\mathcal{S}_1$-$\mathcal{S}_2$ iteration at the current BB node restarts by solving the RMP.

    %The path-removal and arc-marking trigger no new BB nodes created, but a restart iteration between S1 and S2.
\end{itemize}

Once CG ends with no \textit{incompatible paths} in $\Omega^{\prime}_{\pi_d}$, the makespan $\widehat{m}$ is integral. We then verify and enforce the integrality of the RL-entry count $\widetilde{n}_k=\sum_{n\in N_k}{n\cdot \bar{z}^k_n}$ for each car $k\in V$, where $\bar{z}^k_n$ is the value of variable $z^k_n$ ($\forall n\in N_k$). First, the downstream sequence $\Pi^{D}$ is partitioned into $|V^{R}_{\times}|+1$ consecutive segments by the cars in $V^{R}_{\times}$, which serve as partition boundaries. These segments are then examined sequentially. For each, denote $S$ as the set of all cars in it with fractional RL-entry counts. %denote $S$ as the set of all cars within that segment that have fractional RL-entry counts.
Let $h_s=\sum_{k\in S}{\widetilde{n}_k}$ be the sum of RL-entry counts for cars in $S$. If $h_s$ is integral, we proceed to the next segment; otherwise, two branches are created using the following branching rules.

\begin{itemize}
    \item \textit{Branching on the total number of times cars in a segment enter the RL.} On the left branch, a constraint $\sum_{k\in S}{\sum_{n\in N_k}{n\cdot z^k_n}}\geqslant \lceil h_s\rceil$ is added to the RMP to ensure that the cars in $S$ enter the RL at least $\lceil h_s\rceil$ times in total, leaving set $\Omega^{\prime}_{k}$ and the \textit{pricing} SP$_k$ ($\forall k\in V$) unchanged. On the right branch, a constraint $\sum_{k\in S}{\sum_{n\in N_k}{n\cdot z^k_n}}\leqslant \lfloor h_s\rfloor$ is added to limit these cars to at most $\lfloor h_s \rfloor$ RL entries. For each car $k\in S$, only \textit{paths} with up to $\lfloor h_s \rfloor$ RL visits are retained in and added to $\Omega^{\prime}_{k}$. When $h_s=0$, all RL-related arcs in the TSN graphs of the car are disabled. We use Example \ref{ex1} to illustrate this branching procedure.
    %%%%On the left branch, a constraint $\sum_{k\in S}{\sum_{n\in N_k}{n\cdot z^k_n}}\geqslant \lceil h_s\rceil$ is added to the RMP to ensure that the cars in $S$ enter the RL at least $\lceil h_s\rceil$ times overall, without revising set $\Omega^{\prime}_{k}$ or the \textit{pricing} SP$_k$ ($\forall k\in V$). On the right branch, an added constraint $\sum_{k\in S}{\sum_{n}{n\cdot z^k_n}}\leqslant \lfloor h_s\rfloor$ restricts these cars to at most $\lfloor h_s \rfloor$ RL entries. Only \textit{paths} with up to $\lfloor h_s \rfloor$ RL entries can remain in set $\Omega^{\prime}_{k}$, $\forall k\in V$, and be added during the $\mathcal{S}_1$-$\mathcal{S}_2$ iteration. When $h_s=0$, all RL-related arcs in the TSN graphs are disabled.
\end{itemize}

\begin{example}\label{ex1}
    Given a downstream sequence $\Pi^D=[4,7,10,9,5,1,8,6,3,2]$ of 10 cars, the RL-entry counts $\widetilde{n}_k$ for each car $k\in V$ are: $\widetilde{n}_1 = 0.652777$, $\widetilde{n}_2 = 1$, $\widetilde{n}_3 = 1$, $\widetilde{n}_4 = 0$, $\widetilde{n}_5 = 0.222222$, $\widetilde{n}_6 = 0.674999$, $\widetilde{n}_7 = 0$, $\widetilde{n}_8 = 0.2$, $\widetilde{n}_9 = 0.125$, and $\widetilde{n}_{10}=0$. According to the partitioning procedure, $\Pi^D$ is divided into 4 segments by $V^R_{\times}=\{4,7,10\}$: $[4]$, $[7]$, $[10]$, and $[9,5,1,8,6,3,2]$. After removing the cars with integral $\widetilde{n}_k$ from each segment, we obtain the sets $\emptyset$, $\emptyset$, $\emptyset$, and $\{9,5,1,8,6\}$. %After removing the cars with integral $\widetilde{n}_k$ values from each segment, we obtain the corresponding sets $\emptyset$, $\emptyset$, $\emptyset$, and $\{9,5,1,8,6\}$.
    The first three have $h_s = 0$ and induce no branching. For the last set, $h_s = 1.874998$ (fractional). We hence perform branching on this set, yielding two branches $\sum_{k\in S}{\sum_{n\in N_k}{n\cdot z^k_n}}\geqslant 2$ and $\sum_{k\in S}{\sum_{n\in N_k}{n\cdot z^k_n}}\leqslant 1$.
    %According to our partition method, $\Pi^D$ is divided into 4 segments by $V^R_{\times}=\{4,7,10\}$: $\emptyset$, $\emptyset$, $\emptyset$, and $\{9,5,1,8,6\}$. Starting from the first $\emptyset$, we finally identify $\{9,5,1,8,6\}$ where the cars have fractional RL-entry counts of 1.874998 as set $S$, yielding two branches: $\sum_{k\in S}{\sum_{n\in N_k}{n\cdot z^k_n}}\geqslant 2$ and $\sum_{k\in S}{\sum_{n\in N_k}{n\cdot z^k_n}}\leqslant 1$.
\end{example}

If the above branching is not performed, we then check the integrality of the $\boldsymbol{z}$-variable values. For each car $k\in V$, we find the variable $z^{k}_{\widehat{n}_k}$ in the RMP solution that has the maximum value among those in set $\{z^k_n \mid n\in N_k\}$. If their values meet the \textit{cover-set inequality} $\sum_{k\in V}{z^k_{\widehat{n}_k}}\geqslant d-1$, then branching on the $\boldsymbol{z}$ variables is deemed unnecessary, and we obtain an \textit{RL-entry plan} specifying that car $k$ enters the RL $\widehat{n}_k$ times. If not, among all fractional $\boldsymbol{z}$ variables, we select the one $z^{k^{*}}_{n^{*}}$ corresponding to the car with the largest upstream index $k^{*}=\max\{k\in V\mid \exists n\in N_k,
\bar{z}^k_n \text{ is fractional}\}$ and the highest RL-entry count $n^*=\max\{n\in N_{k^*}\mid \bar{z}^{k^*}_n \text{ is fractional}\}$, and branch on it as follows.

%If the above branching is not performed, it then proceeds to examine the integrality of the $\boldsymbol{z}$-variable values. For each car $k\in V$, we identify the variable $z^k_{\widetilde{n}_k}$ in set $\{z^k_n\in \{0,1\}\mid n\in N_k\}$ with the maximum value in the RMP solution.
%%%%%%%If the above branching is not carried out, it then examines the integrality of the $\boldsymbol{z}$-variable values. %For each car $k\in V$, we find the variable $z^{k}_{\widehat{n}_k}$ with the maximum value in the RMP solution among the variables in set $\{z^k_n\in \{0,1\} \mid n\in N_k\}$.
%%%%%%%For each car $k\in V$, we find the variable $z^{k}_{\widehat{n}_k}$ in the RMP solution with the maximum value among the variables in set $\{z^k_n\in \{0,1\} \mid n\in N_k\}$. If their values meet the \textit{cover-set inequality} $\sum_{k\in V}{z^k_{\widehat{n}_k}}\geqslant d-1$, then branching on $\boldsymbol{z}$ variables is deemed unnecessary, and we obtain an \textit{RL-entry plan} specifying that car $k$ enters the RL $\widehat{n}_k$ times. If not, among all fractional $\boldsymbol{z}$ variables, we select the one $z^{k^{*}}_{n^{*}}$ corresponding to the car with the largest upstream index $k^{*}=\max\{k\in V\mid \exists n\in N_k,
%%%%%%%\bar{z}^k_n \text{ is fractional}\}$ and the highest RL-entry count $n^*=\max\{n\in N_{k^*}\mid \bar{z}^{k^*}_n \text{ is fractional}\}$, and branch on it as follows.

\begin{itemize}
    \item \textit{Branching on the $\boldsymbol{z}$ variables.}
    Branching on variable $z^{k^*}_{n^*}$ yields two BB nodes by fixing $z^{k^*}_{n^*}=1$ or $z^{k^*}_{n^*}=0$ in the RMP. Under $z^{k^*}_{n^*}=1$, only \textit{paths} with exactly $n^*$ RL visits remain in set $\Omega^{\prime}_{k^*}$. If $n^{*}=0$, all RL-related arcs in the TSN graphs of car $k^*$ are disabled. Under $z^{k^*}_{n^*}=0$, all \textit{paths} with exactly $n^*$ RL visits are excluded from $\Omega^{\prime}_{k^*}$ and barred from future addition. No changes to the TSN graphs are needed, but all \textit{through-arrive arcs} in $\mathcal{G}^3_{k^*}$ are inaccessible when $n^{*}=0$.

    %%Branching on variable $z^{k^*}_{n^*}$ yields two BB nodes by fixing $z^{k^*}_{n^*}=1$ or $z^{k^*}_{n^*}=0$ in the RMP. Under $z^{k^*}_{n^*}=1$, only \textit{paths} with exactly $n^*$ RL entries are included in set $\Omega^{\prime}_{k^*}$. If $n^{*}=0$, all RL-related arcs are disabled in the graphs. Under $z^{k^*}_{n^*}=0$, all \textit{paths} with exactly $n^*$ RL-entries are excluded from $\Omega^{\prime}_{k^*}$ and prevented from future addition. No structural changes to the graphs, but all \textit{through-arrive arcs} in $\mathcal{G}^3_{k^*}$ are made inaccessible when $n^{*}=0$.
\end{itemize}

If no branching occurs up to this point, an integer makespan $\widehat{m}$ and an \textit{RL-entry plan} $u=\{(1,\widehat{n}_1),\dots,(k,\widehat{n}_k),\dots,(d,\widehat{n}_d)\}$ are obtained, where each pair $(k,\widehat{n}_k)$ indicates that car $k\in V$ enters the RL $\widehat{n}_k\in N_k$ times. The remaining decisions for the CRSP-MS are then obtained by solving a feasibility problem in the second stage $\mathcal{S}_2$, as explained in the next section.

%%%If no BB nodes are generated up this point, an integer makespan $\widehat{m}$ and an \textit{RL-entry plan} $u=\{(1,\widehat{n}_1),\dots,(k,\widehat{n}_k),\dots,(d,\widehat{n}_d)\}$ are obtained, where each pair $(k,\widehat{n}_k)$ indicates that car $k\in V$ enters the RL $\widehat{n}_k\in N_k$ times. The remaining decisions for the CRSP-MS are then completed in BP-CP by solving a feasibility problem in the third stage $\mathcal{S}_3$, as explained in the next section.

\subsection{Second stage: Constraint Programming for the Car-to-Lane Assignment Problem}\label{sec4-3}
%The feasibility problem in $\mathcal{S}_3$ assigns each car to a specific forward lane for each time it needs to enter, along with the corresponding entry and exit timestamps.
%The feasibility problem in $\mathcal{S}_2$ assigns each car to a forward lane at each time it needs to enter, and determines the corresponding lane-entry and lane-exit timestamps. To model it, we create $\widehat{n}_k+1$ \textit{dummy cars} for each car $k\in V$ if it enters the RL $\widehat{n}_k$ times, i.e., for each pair $(k,\widehat{n}_k)\in u$, where the $n$-th ($1\leqslant n\leqslant \widehat{n}_k+1$) \textit{dummy car}, denoted by $o^k_n$, represents the $n$-th time car $k$ enters a forward lane.

The feasibility problem in $\mathcal{S}_2$ assigns each car to a specific forward lane at each time it needs to enter, and determines the corresponding entry and exit timestamps. To model it, we create $\widehat{n}_k+1$ \textit{dummy cars} for each car $k\in V$ if it enters the RL $\widehat{n}_k$ times (i.e., for each pair $(k,\widehat{n}_k)\in u$), where the $n$-th ($1\leqslant n\leqslant \widehat{n}_k+1$) \textit{dummy car}, denoted by $o^k_n$, indicates the $n$-th time car $k$ enters a forward lane.

For each car $k\in V$, let $\mathcal{D}^k=\{o^k_1,\dots,o^k_n,\dots,o^k_{\widehat{n}_k+1}\}$ be the ordered set of \textit{dummy cars} created for it. The first \textit{dummy car} ($o^k_1$) represents its initial entry into a forward lane from the upstream shop, and the last one ($o^k_{\widehat{n}_k+1}$) the final entry before reaching the downstream shop. Each intermediate \textit{dummy car} $o^k_n$, where $2\leqslant n\leqslant \widehat{n}_k$, corresponds to an entry from the RL into a forward lane, followed by a subsequent re-entry into the RL. The set of all \textit{dummy cars} is denoted by $\mathcal{D}=\bigcup_{k\in V}{\mathcal{D}^k}$.

%%%Let $\mathcal{D}^k=\{o^k_1,\dots,o^k_n,\dots,o^k_{\widehat{n}_k+1}\}$ be the ordered set of all \textit{dummy cars} for each car $k\in V$, and $\mathcal{D}=\bigcup_{k\in V}{\mathcal{D}^k}$ the set of all such \textit{dummy cars}. For each car $k$, the first \textit{dummy car} ($o^k_1$) represents its entry into a forward lane from the upstream shop, and the last one ($o^k_{\widehat{n}_k+1}$) corresponds to its final exit through a forward lane to the downstream shop. Each intermediate \textit{dummy car} $o^k_n$, where $1<n\leqslant\widehat{n}_k$, indicates that car $k$ enters a forward lane from the RL and will later re-enter the RL. If $\widehat{n}_k=0$, the only \textit{dummy car} in $\mathcal{D}^k$, i.e., $o^k_1$, is car $k$ itself, which never enters the RL.

We formulate this car-to-lane assignment problem as a CP model. Each \textit{dummy car} is represented by an \textit{interval decision variable} in the IBM CP Optimizer, with \textit{start} and \textit{end times} denoting the timestamps at which the car enters and is scheduled to exit a lane. %The \textit{start} and \textit{end times} of this variable correspond to the timestamps marking when the car enters and is scheduled to exit a lane, respectively.
To capture lane assignment, we also define an \textit{optional} lane-specific \textit{interval variable} for each \textit{dummy car}. Its absence in the CP solution indicates that the \textit{dummy car} is not assigned to the related forward lane. For each car $k\in V$, all decision variables in the model are defined as follows. (The domains of integer variables and of the \textit{start} and \textit{end times} of \textit{interval variables} are given in the online supplement.)
%For \textit{interval variables}, the value ranges for their \textit{start} and \textit{end times} are provided in the online supplement.
%If this variable is determined to be absent in the CP solution, it indicates that the dummy car is not assigned to the related forward lane. All decision variables used in the model are defined as follows. For interval variables, the value ranges for their start and end times are provided in the online supplement.

%%%%We formulate the assignment problem as a CP model. Each \textit{dummy car}, associated with two timestamps marking when it enters and leaves a forward lane, can be represented as an \textit{interval variable} in IBM CP Optimizer. The \textit{start time} and \textit{end time} of this variable correspond to the timestamps when the car arrives at the first cell of a forward lane and when it is scheduled to leave that lane, respectively. The \textit{duration} between these times, i.e., the length of the variable, can be used to impose lane-capacity constraints.

%%%%To model lane assignment, a lane-specific \textit{interval variable} can be defined as \textit{optional} for each \textit{dummy car}, indicating whether that car is assigned to a specific lane. If such a variable is determined to be absent in the solution of the CP model, then the associated \textit{dummy car} does not enter that lane. In this case, all constraints applied to that variable become inactive.

%%%%As a result, the decision variables used in our assignment CP model are defined as below:

\begin{itemize}[leftmargin=0pt,itemsep=2pt,topsep=2pt]
    \item $y^k_n$: a \textit{present interval variable} for \textit{dummy car} $o^k_n\in \mathcal{D}^k$, %$\forall k\in V$, %with \textit{start time} defined in X1 and \textit{end time} defined in X2.

    \item $\widetilde{y}^{l}_{kn}$: an \textit{optional interval variable} indicates assigning \textit{dummy car} $o^k_n\in \mathcal{D}^k$ to forward lane $l\in L^f$,

    %%\item $\widetilde{y}^{l}_{kn}$: an \textit{optional interval variable} indicates that \textit{dummy car} $o^k_n\in \mathcal{D}^k$ is assigned to forward lane $l\in L^f$, %$\forall k\in V$, %with \textit{start time} defined in X1 and \textit{end time} defined in X2.

    \item $g^k_n$: a \textit{present interval variable} denotes the $n$-th ($n\in \{1,\dots,\widehat{n}_k\}$) entry of car $k$ into the RL,

    %\item $g^k_n$: a \textit{present interval variable} denotes the $n$-th entry of \textit{dummy car} $o^k_n\in \mathcal{D}^k\backslash \{o^k_{\widehat{n}_k+1}\}$ into the RL, %$\forall k\in V$, %with \textit{start time} defined in X1 and \textit{end time} defined in X2.

    \item $\phi^{in}_{kn}$: an integer variable represents the arrival timestamp of \textit{dummy car} $o^k_n\in \mathcal{D}^k$ at the leftmost cell of a forward lane, %$\forall k\in V$, %defined between x1 and x2.

    \item $\phi^{out}_{kn}$: an integer variable represents the departure timestamp of \textit{dummy car} $o^k_n\in \mathcal{D}^k$ from the rightmost cell of a forward lane, %$\forall k\in V$.%, defined between x1 and x2.
\end{itemize}

The car-to-lane assignment CP model ($\mathcal{M}^{\text{CP}}$) is then written as follows.

%%The car-to-lane assignment problem can then be written as the following CP model ($\mathcal{M}^{\text{CP}}$).

\vspace{-0.5cm}

\begin{subequations}\label{CPModel}
\begin{align}
  \label{cp-con1}(\mathcal{M}^{\text{CP}})\,\ &\texttt{StartOf}(y^k_n)=\phi^{in}_{kn}, &&o^k_n \in \mathcal{D}^{k}, k\in V, \\
        \label{cp-con2}&\texttt{EndOf}(y^k_n)=\phi^{out}_{kn}, &&o^k_n \in \mathcal{D}^{k}, k\in V, \\
        \label{cp-con3}&\texttt{AllDiff}\big( \{\phi^{in}_{kn}\mid o^k_n \in \mathcal{D}^{k}, k\in V \} \big), \\
        \label{cp-con4}&\texttt{AllDiff}\big( \{ \phi^{out}_{kn}\mid o^k_n \in \mathcal{D}^{k}, k\in V \} \big), \\
        \label{cp-con5}&\texttt{Alternative}\big(y^k_n, \{\widetilde{y}^l_{kn}\mid l\in L^f  \} \big), &&o^k_n \in \mathcal{D}^{k}, k\in V, \\
        \label{cp-con6}&\texttt{EndAtStart}\big(y^k_n, g^k_n, 1 \big), &&o^k_n \in \mathcal{D}^k \backslash \{o^k_{\widehat{n}_k+1}\}, k\in V, \\
        \label{cp-con7}&\texttt{EndAtStart}\big(g^k_n, y^k_{n+1}, 1 \big), &&o^k_n \in \mathcal{D}^k \backslash \{o^k_{\widehat{n}_k+1}\}, k\in V,\\
        \label{cp-con8}&\texttt{StartBeforeStart}\big(y^k_1, y^{k+1}_1,1 \big), &&k\in V\backslash \{d\}, \\
        \label{cp-con9}&\texttt{EndBeforeEnd}\big(y^{\pi_p}_{\widehat{n}_{\pi_p}+1}, y^{\pi_{p+1}}_{\widehat{n}_{\pi_{p+1}}+1}, 1 \big), &&p\in P\backslash \{d\}, \\
        \label{cp-con10}&\texttt{EndBeforeStart}\big( y^k_n,y^k_{n+1}, \tau^m \big), &&o^k_n \in \mathcal{D}^k \backslash \{o^k_{\widehat{n}_k+1}\}, k\in V, \\
        \label{cp-con11}&\sum_{k\in V}{\sum_{o^k_n \in \mathcal{D}^k}{ \texttt{Pulse}\big(\widetilde{y}^l_{kn}, 1 \big)}}\leqslant \bar{q}, &&l\in L^f, \\
        \label{cp-con12}&\sum_{k\in V}{\sum_{o^k_n \in \mathcal{D}^k \backslash \{o^k_{\widehat{n}_k+1}\}}{\texttt{Pulse}\big(g^k_n, 1 \big)}}\leqslant \bar{q}, \\
        &\texttt{If} \,\ \big(\phi^{in}_{k,n_k} - \phi^{in}_{v,n_v} \big) \times \big(\phi^{out}_{k,n_k} - \phi^{out}_{v,n_v} \big)<0 \,\ \Longrightarrow \nonumber \\%[-5pt]
        \label{cp-con13}&\texttt{PresentOf}\big(\widetilde{y}^{l}_{k,n_k} \big) + \texttt{PresentOf}(\widetilde{y}^{l}_{v,n_v}) \leqslant 1, &&l\in L^f, o^k_{n_k}, o^v_{n_v} \in \mathcal{D}, \\
        \label{cp-con14}&\big(\texttt{StartOf}(y^k_{n_k+1}) < \texttt{StartOf}(y^v_{n_v+1}) \big)= \nonumber \\%[-5pt]
        &\big(\texttt{EndOf}(y^k_{n_k}) < \texttt{EndOf}(y^v_{n_v}) \big), &&k<v\in V, n_k=1,\dots,\widehat{n}_k, n_v=1,\dots,\widehat{n}_v.
\end{align}
%%%%\setcounter{RMPone}{\value{equation}}
%record how many equations are put in this "align"
\end{subequations}

\vspace{-6mm}

For each \textit{dummy car}, constraints \eqref{cp-con1} and \eqref{cp-con2} synchronize the \textit{start} and \textit{end times} of its \textit{interval variables} with the corresponding integer variables. %Constraints \eqref{cp-con3} and \eqref{cp-con4} ensure distinct lane-entry and lane-exit timestamps so that no two cars enter (exit) a forward lane at the same timestamp.
Constraints \eqref{cp-con3}/\eqref{cp-con4} ensure distinct lane-entry/lane-exit timestamps so that no two cars enter/exit a forward lane at the same timestamp. Constraints \eqref{cp-con5} select exactly one \textit{optional interval variable} from set $\{\widetilde{y}^l_{kn}\mid l\in L^f\}$ to be \textit{present} for each \textit{dummy car} $o^k_n\in \mathcal{D}$. This variable shares attributes with variable $y^k_n$, and its lane index $l\in L^f$ determines the assigned forward lane. Constraints \eqref{cp-con6}/\eqref{cp-con7} specify that the $n$-th exit timestamp of car $o^k_n$ from a forward lane/the RL equals the $n$-th/$n+1$-th entry timestamp into the RL/a forward lane minus one TU. Constraints \eqref{cp-con8} and \eqref{cp-con9} maintain the given upstream and downstream sequences, with at least one TU of separation between consecutive cars. For successive forward-lane entries of car $k\in V$, constraints \eqref{cp-con10} ensure the required transit time ($\tau^m$ TUs) through the RL. Constraints \eqref{cp-con11} and \eqref{cp-con12} impose lane capacity limits using function $\texttt{Pluse($a,1$)}$, which returns 1 for any timestamp within the interval defined by variable $a$. Their left-hand sides count the number of cars in each lane at any timestamp. To respect FIFO rules, constraints \eqref{cp-con13} forbid two cars from using the same forward lane if their entry and exit orders are opposite. Function $\texttt{PresentOf($a$)}$ returns 1 only if the \textit{optional} variable $a$ is determined to be \textit{present}. Constraints \eqref{cp-con14} ensure consistency between the orders in which cars enter the forward lanes and their previous exits from the RL.

A feasible $\mathcal{M}^{\text{CP}}$ solution yields a CRSP-MS solution. If $\mathcal{M}^{\text{CP}}$ is infeasible, the makespan $\widehat{m}$ along with the RL-entry plan $u$ is confirmed infeasible. Any shorter makespan $t<\widehat{m}$ paired with this $u$ is also invalid because it further reduces the \textit{interval}-\textit{variable} domains. Consequently, the infeasible pair $(\widehat{m},u)$ can be eliminated by adding the following lifted \textit{no-good cut} \eqref{cut2} to the MP, where $\Omega_{\pi_d}^{\prime}(t)$ denotes the set of all \textit{paths} for the last car $\pi_d$ with a downstream-arrival timestamp $t$.

%%%If model $\mathcal{M}^{\text{CP}}$ yields a feasible assignment, then we obtain a CRSP solution. Otherwise, if infeasible, the combination of the given makespan $\widehat{m}$ and RL-usage plan $u$ is removed by adding the following valid cut \eqref{cut1} to all three RMPs.

%%%\begin{equation}\label{cut1}
%%%    \sum_{r\in \Omega^{'}_{\pi_d}(\widehat{m})}{x^{\pi_d}_r}+\sum_{(k,\widehat{n}_k)\in u}{z^k_{\widehat{n}_k}}\leqslant d
%%%\end{equation}
%%%\noindent where set $\Omega^{'}_{k^*}(\widehat{m})$ comprises all \textit{paths} by which the last car $\pi_d$ arrives at the downstream shop at timestamp $\widehat{m}$. Cut \eqref{cut1} acts as a standard \textit{no-good cut} and can be enhanced in our case by including all \textit{paths} where car $\pi_d$ arrives earlier than $\widehat{m}$. The reason is that a shorter makespan reduces the domain of \textit{interval variables}. The lifted cut is written as follows:

%\vspace{-3mm}
\begin{equation}\label{cut2}
    \sum_{t\leqslant \widehat{m}}{\sum_{r\in \Omega^{\prime}_{\pi_d}(t)}{x^{\pi_d}_r}}+\sum_{(k,\widehat{n}_k)\in u}{z^{k}_{\widehat{n}_k}}\leqslant d
\end{equation}

Once a cut is added, the RMP is solved again using CG.
%Once a cut is added, the RMPs are solved again, along with the restarted $\mathcal{S}_1$-$\mathcal{S}_2$ iteration.

\subsection{Enhancements}\label{sec4-5}
%\subsection{Probing techniques}\label{sec4-5}
%\subsection{Preprocessing}\label{sec4-5}
This section presents two enhancements for BP-CP: a valid lower bound and a preprocessing scheme with an associated valid inequality for the MP. The latter selects \textit{promising cars} likely to use the RL that help probe CRSP-MS solutions both before and during BP-CP execution.
%for probing CRSP-MS solutions both before and during BP-CP execution.

%%%%%This section introduces three algorithmic enhancements for BP-CP: a valid lower bound for the CRSP and two preprocessing techniques, each with its respective valid inequalities added to the RMPs. The first preprocessing is to select \textit{promising cars} likely to use the RL and then use them to probe CRSP solutions both before and during BP-CP execution. The second is to solve a series of MIPs to prefix the departure and arrival timestamps for specific cars.

\begin{itemize}
    \item \textit{A valid lower bound for the CRSP-MS.} %Let $\underline{\tau}^e_k$ denote the lower bound on the arrival timestamp of each car $k\in V$ at the downstream shop, which equals $k+\bar{q}$ when the upstream and downstream sequences are identical.
    Let $\underline{\tau}^e_k$ denote the lower bound on the downstream-arrival timestamp for each car $k\in V$, which equals $k+\bar{q}$ when the upstream and downstream sequences are identical. As Corollary \ref{coro1} states, in the optimal CRSP-MS solution, the first car $\pi_1$ in $\Pi^{D}$ departs the upstream shop at timestamp $\pi_1-1$ and reaches the downstream shop at its lower bound $\underline{\tau}^e_1=\pi_1+\bar{q}$. This $\underline{\tau}^e_1$ yields a tighter bound $\underline{\tau}^e_2=\max\{\underline{\tau}^e_1+1,\tau_2+\bar{q}\}$ for the next car $\pi_2$, and the process continues recursively for all remaining cars. The bound $\underline{\tau}^e_{\pi_d}$ for the last car $\pi_d$ offers a valid lower bound on the \textit{resequencing makespan}, as summarized below.
\end{itemize}

\begin{proposition}\label{pro3}
    %\textup{(i)} In the optimal CRSP-MS solution, the first downstream car $\pi_1$ departs at timestamp $\pi_1-1$ and arrives at timestamp $\pi_1+\bar{q}$.
    \textup{(i)} A lower bound $\underline{\tau}^e_{\pi_p}$ on the downstream-arrival timestamp of car $\pi_p\in V$ ($\forall p\in P\backslash \{1\}$) is obtained recursively from $\max \{\underline{\tau}^e_{\pi_{p-1}}+1,\pi_p+\bar{q}\}$, where $\underline{\tau}^e_{\pi_1}=\pi_1+\bar{q}$. %\textup{(ii)} A lower bound $\underline{\tau}^e_{\pi_p}$ on the arrival timestamp of each car $\pi_p\in V\backslash \{\pi_1\}$ at its downstream position $p\in P\backslash \{1\}$ is obtained recursively as $\max \{\underline{\tau}^e_{\pi_{p-1}}+1,\pi_p+\bar{q}\}$, where $\underline{\tau}^e_{\pi_1}=\pi_1+\bar{q}$.
    \textup{(ii)} The bound $\underline{\tau}^e_{\pi_d}$ for the last car $\pi_d$ is a valid lower bound for the CRSP-MS. \textup{(iii)} If the CRSP-MS is feasible with a given MB and no RL, then the minimum makespan under that MB is exactly $\underline{\tau}^e_{\pi_d}$.
\end{proposition}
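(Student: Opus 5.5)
For part (i) I will use two elementary timing facts about any feasible schedule. First, car $k$ cannot leave the upstream shop before timestamp $k-1$. This follows by induction from the upstream order (constraints \eqref{m1-con2}) together with the start at $t=0$.

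Second, every trajectory from $s$ to $e$ takes at least $\tau^m=\bar{q}+1$ TUs. The car needs one TU to enter $c_{l1}$, then $\bar{q}-1$ TUs of moving arcs to reach $c_{l\bar{q}}$, then one TU to reach $e$. Any visit to the RL adds at least $\tau^r$ further TUs. Hence the arrival time of car $k$ is at least $k-1+\bar{q}+1=k+\bar{q}$. In particular car $\pi_1$ arrives no earlier than $\pi_1+\bar{q}$.

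Next, the downstream order forces at least one TU between consecutive arrivals, since the exit transfer car handles only one car per timestamp (constraints \eqref{m1-con3}/\eqref{m1-con6}). So the arrival of $\pi_p$ is at least the arrival of $\pi_{p-1}$ plus one. Combining the two facts and inducting on $p$ shows that the arrival of $\pi_p$ is at least $\max\{\underline{\tau}^e_{\pi_{p-1}}+1,\pi_p+\bar{q}\}=\underline{\tau}^e_{\pi_p}$. The makespan is the arrival time of $\pi_d$, so part (ii) follows immediately from part (i) with $p=d$.

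Part (iii) is the main obstacle, because it needs a construction that attains the bound, not just an inequality. Assume the MB without RL admits some feasible resequencing. I will then give it the following canonical schedule.
\begin{itemize}
\item Each car $k$ departs at $k-1$ and enters a lane at $k$, using the same car-to-lane assignment as a given feasible solution. By Proposition \ref{pro1}(ii) such an assignment exists and is FIFO-consistent: cars in one lane appear in the same relative order upstream and downstream.
\item Car $\pi_p$ leaves the buffer exactly at $\underline{\tau}^e_{\pi_p}$. Equivalently, it sits in cell $c_{l\bar{q}}$ until timestamp $\underline{\tau}^e_{\pi_p}-1$, and it is pushed forward as far as possible inside its lane while waiting.
\end{itemize}

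Then I verify feasibility of this schedule item by item.
\begin{itemize}
\item \emph{Entry and exit spacing.} Entries are distinct because the times $k$ are distinct. Exit times are strictly increasing in $p$ by the recursion.
\item \emph{Travel time.} Each car can reach the last cell in time, because $\underline{\tau}^e_{\pi_p}\ge \pi_p+\bar{q}$.
\item \emph{FIFO.} Within a lane, the entry order and the exit order agree, by the assignment's consistency.
\item \emph{Capacity.} This is the delicate part. The number of cars in lane $l$ at time $t$ must be at most $\bar{q}$. Here the exit times $\underline{\tau}^e$ are the earliest possible, so every car leaves no later than in the given feasible solution. I will try to show this by induction on $p$: any feasible schedule's exit time for $\pi_p$ is $\ge\underline{\tau}^e_{\pi_p}$, and entry times in the canonical schedule are $\le$ those of the feasible one. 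Hence each car's residence interval in its lane is contained in (or shifted earlier relative to) its interval in the feasible schedule.
\end{itemize}

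The capacity count needs care because entries are also shifted earlier. If the containment argument fails, I would instead argue directly. At any time $t$, the cars present in lane $l$ are those that entered by $t$ but whose downstream predecessors-in-lane have not all exited. Because cars exit back-to-back at the earliest times, the occupancy then reduces to the feasibility condition behind Lemma \ref{lema1} restricted to lane $l$ (at most $\bar{q}$ cars from the lane pending). The given feasible assignment guarantees this at the respective entry moments.

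Once feasibility holds, the makespan equals $\underline{\tau}^e_{\pi_d}$. Together with part (ii), this gives optimality.
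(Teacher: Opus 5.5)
Your arguments for (i) and (ii) are correct. They follow the same reasoning the paper sketches in its ``valid lower bound'' paragraph: earliest departure $k-1$, minimum transit $\tau^m=\bar{q}+1$, one TU between consecutive downstream arrivals, and induction on $p$. The paper gives no argument for (iii).

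For (iii), the capacity step fails, and it cannot be repaired, because (iii) itself is false as stated.

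\emph{Why your argument breaks.} Your canonical schedule fixes every lane entry at time $k$, while no car can reach the downstream shop before $\underline{\tau}^e_k$. Shifting residence intervals to the left is not containment. A later car can shift left by more than an earlier car in the same lane, and this creates new overlaps. Your fallback does not close the gap either. A given feasible assignment guarantees capacity only at its own lane-entry instants, and those may be delayed exactly so that an earlier car in the same lane can leave first.

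\emph{Counterexample.} Take an MB with $l^f=2$ forward lanes, $\bar{q}=2$, $d=5$ and $\Pi^D=[3,1,5,2,4]$.
\begin{itemize}
\item The conflicting pairs are $\{1,3\},\{2,3\},\{2,5\},\{4,5\}$. These form a path, so every feasible solution uses lanes $\{1,2,4\}$ and $\{3,5\}$.
\item The recursion gives $\underline{\tau}^e_{\pi_p}=5,6,7,8,9$, so the bound is $9$.
\item Car $3$ departs no earlier than $2$, so it reaches the downstream shop no earlier than $2+\tau^m=5$. Hence car $1$ arrives no earlier than $6$.
\item When car $4$ enters lane $\{1,2,4\}$, car $2$ is still in that lane. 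Car $2$ leaves after car $5$, which enters after car $4$. Both cells are then taken by cars $2$ and $4$, so car $1$ must already be downstream, which forces car $4$'s lane entry to be at least $6$.
\item It follows that car $5$ enters at time $\ge 7$ and arrives at $\ge 9$. Then car $2$ arrives at $\ge 10$ and car $4$ at $\ge 11$.
\item Makespan $11$ is attained: lane entries at $1,2,3,6,7$, and arrivals of cars $3,1,5,2,4$ at $5,6,9,10,11$.
\end{itemize}
So the instance is MB-feasible, yet its optimum is $11>9$. In your canonical schedule, lane $\{1,2,4\}$ holds cars $1,2,4$ at time $4$, with residence intervals $[1,5]$, $[2,7]$ and $[4,8]$. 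Since the lane assignment is forced and the delay imposed on car $4$ propagates through the tight downstream chain $5,2,4$, no choice of assignment or exit times rescues a bound-attaining schedule.

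\emph{What your construction does prove.} It establishes a conditional version of (iii). Suppose some FIFO-consistent assignment respects capacity for the canonical intervals $[k,\underline{\tau}^e_k-1]$. Then, with cars pushed forward within each lane, the schedule is cell-feasible, and it suffices to check occupancy at lane-entry instants. Under that hypothesis the minimum makespan equals $\underline{\tau}^e_{\pi_d}$.
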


%%%%%%\begin{proposition}\label{pro3}
%%%%%%    \textup{(i)} In the optimal CRSP-MS solution, car $\pi_1$ departs at timestamp $\pi_{1}-1$ and arrives at timestamp $\pi_{1}+\bar{q}$. %\textup{(i)} In the optimal CRSP solution, the first car $\pi_{1}$ in the downstream shop departs at timestamp $\pi_{1}-1$ and arrives at timestamp $\pi_{1}+\bar{q}$.
%%%%%%    \textup{(ii)} A lower bound $\underline{\tau}^e_{\pi_p}$ on the arrival timestamp of each car $\pi_p \in V\backslash\{\pi_1\}$ at its downstream position $p\in P\backslash \{1\}$ is calculated recursively as $\max \{\underline{\tau}^e_{\pi_{p-1}}+1, \pi_{p}+\bar{q}\}$, where $\underline{\tau}^e_{\pi_1}=\pi_1+\bar{q}$. \textup{(iii)} The lower bound $\underline{\tau}^e_{\pi_d}$ for the last car $\pi_d$ is a valid lower bound for the CRSP-MS. \textup{(iv)} If the CRSP-MS is feasible under a given MB, then the minimum resequencing makespan equals the lower bound $\underline{\tau}^e_{\pi_d}$.
%%%%%%    \end{proposition}

\begin{itemize}
    \item \textit{Selection of promising cars likely to enter the RL.} As introduced in Section \ref{sec3-1}, two \textit{conflicting} cars cannot be assigned to the same forward lane without entering the RL. Therefore, for any subset $\mathcal{C}$ of cars in which every pair conflicts, if $|\mathcal{C}|$ exceeds the number of forward lanes, i.e., $|\mathcal{C}|>l^f$, then at least $|\mathcal{C}|-l^f$ cars in $\mathcal{C}$ must enter the RL.
    %Two cars cannot be assigned to the same forward lane without entering the RL if their upstream indexes and downstream positions are in opposite order, i.e., $k_1>k_2$, $p_1<p_2$, or $k_1<k_2$, $p_1>p_2$, $\forall k_1,k_2\in V$, $\forall p_1,p_2\in P$, defining them as \textit{conflicting} cars. For any subset $\mathcal{C}$ of cars where every pair conflicts, if $|\mathcal{C}|$ exceeds the number of forward lanes, i.e., $|\mathcal{C}|>l^f$, then at least $|\mathcal{C}|-l^f$ cars in $\mathcal{C}$ must enter the RL.
%Consider a subset $\mathcal{C}$ of cars where no two cars can be assigned to the same forward lane without using the RL. If the size of $\mathcal{C}$ exceeds all available forward lanes, i.e., $|\mathcal{C}|>|L^f|$, then at least $|\mathcal{C}|-|L^f|$ cars in $\mathcal{C}$ must use the RL.
\end{itemize}

From \citet{guo2025logic}, we identify such subsets via an undirected graph where each car is a vertex and edges connect \textit{conflicting} pairs. Each \textit{clique} in this graph represents a subset $\mathcal{C}$ in which all cars must enter distinct forward lanes unless they enter the RL. Using the \textit{Bron-Kerbosch algorithm}, we find only the \textit{maximal cliques} that cannot be extended further by adding more vertices.

%%%%Following \citet{guo2025logic}, such subsets can be identified via an undirected graph in which each car is considered a vertex. An edge links two cars if their upstream indexes and downstream positions are in opposite orders, i.e., $k_1>k_2$, $p_1<p_2$, or $k_1<k_2$, $p_1>p_2$, $\forall k_1, k_2\in V$, $\forall p_1,p_2\in P$, defining them as \textit{conflicting} cars. A subset $\mathcal{C}$ then forms a \textit{clique} in this graph, where every two cars are connected and must enter distinct forward lanes unless using the RL. As in \citet{guo2025logic}, we run the \textit{Bron-Kerbosch algorithm} to find only the \textit{maximal cliques} that cannot be further extended by adding more vertices (i.e., cars).

Let $\mathcal{C}^{\text{all}}$ be the set of these \textit{maximal cliques} whose sizes exceed $l^f$, and arrange the cars in each of these \textit{cliques} in ascending order based on their upstream indexes. %Let $\mathcal{C}^{\text{all}}$ be the set of these \textit{maximal cliques} whose sizes exceed $l^f$, with cars in each ordered by ascending upstream indexes.
In each \textit{clique} $\mathcal{C}\in \mathcal{C}^{\text{all}}$, at least $|\mathcal{C}|-l^f$ cars enter the RL, which is enforced by adding the following inequality to the MP at the root node before BP-CP execution.
%For each clique, the requirement that at least $|\mathcal{C}|-l^f$ cars in $\mathcal{C}$ enter the RL is enforced by adding the following inequality to the RMPs at the root node before BP-CP execution.

%%%Let $\mathcal{C}^{\text{all}}$ be the set of all \textit{maximal cliques} with sizes greater than $|L^f|$, and cars in each ordered by ascending upstream indexes. For each \textit{clique} $\mathcal{C}\in \mathcal{C}^{\text{all}}$, at least $|\mathcal{C}|-|L^f|$ cars in it must use the RL, which is enforced by adding the following inequality to the RMPs of the root node.

\vspace{-5mm}

\begin{equation}\label{ieq1}
    \text{Clique Inequality: }\qquad \sum_{k\in \mathcal{C}}{(1-z^k_0)}\geqslant |\mathcal{C}|-l^f
\end{equation}

\noindent Intuitively, the first $|\mathcal{C}|-l^f$ cars in $\mathcal{C}$ are more likely to use the RL in the optimal solution. These cars are positioned later downstream and stay in the MB-RL longer, allowing them to enter the RL but with little impact on the makespan. Formally, we refer to them as \textit{promising cars} as follows.
%%%Intuitively, the first $|\mathcal{C}|-l^f$ cars in $\mathcal{C}$ are more likely to use the RL in the optimal solution. These cars are positioned later downstream and stay longer in the MB-RL, allowing them to enter the RL with little impact on the makespan. We refer to them as \textit{promising cars} as follows.
\begin{definition}\label{def3}
    For each \textit{clique} $\mathcal{C}\in \mathcal{C}^{\text{all}}$, the first $|\mathcal{C}|-l^f$ cars are defined as \textit{promising cars} and form set $V^{\mathcal{C}}$. %Denote the set of all \textit{promising cars} across $\mathcal{C}^{\text{all}}$ by $V^{\text{pro}}=\bigcup_{\mathcal{C}\in \mathcal{C}^{\text{all}}}{V^{\mathcal{C}}}$.
    Let $V^{\text{pro}}=\bigcup_{\mathcal{C}\in \mathcal{C}^{\text{all}}}{V^{\mathcal{C}}}$ denote the set of all \textit{promising cars} across %all \textit{cliques} in
    $\mathcal{C}^{\text{all}}$.
    %%%%%%%%%%%The first $|\mathcal{C}|-|L^f|$ cars in each \textit{clique} $\mathcal{C}\in \mathcal{C}^{\text{all}}$% with $|\mathcal{C}|>|L^f|$
    %%%%%%%%%%%, denoted by set $\mathcal{C}(|\mathcal{C}|-|L^f|)$, are considered promising cars, defined as those likely to use the RL in the optimal CRSP solution. The set of all \textit{promising cars} is denoted by $V^{\textit{pro}}=\bigcup_{\mathcal{C}\in \mathcal{C}^{\text{all}}}{\mathcal{C}(|\mathcal{C}|-|L^f|)}$.
\end{definition}

\subsection{Overall implementation}\label{sec4-6}
This section details the BP-CP implementation. To reduce branching and accelerate the discovery of CRSP-MS solutions, we incorporate a heuristic that uses \textit{promising cars} to construct \textit{RL-entry plans} both before and during execution. The algorithm flowchart is given in the online supplement.

%%The implementation of BP-CP is detailed in this section. To reduce branching and accelerate the discovery of high-quality CRPS-MS solutions, it also incorporates a heuristic that uses \textit{promising cars} to build \textit{RL-entry plans} both before and during execution. The algorithm flowchart can be found in the online supplement.

Our procedure starts by computing the valid lower bound $\underline{\tau}^e_{\pi_d}$ given in Proposition \ref{pro3} and identifying the set $\mathcal{C}^{\text{all}}$ of all \textit{maximal cliques} of size greater than $l^f$, as described in Section \ref{sec4-5}. From $\mathcal{C}^{\text{all}}$, we derive all related inequalities \eqref{ieq1} and obtain the set $V^{\text{pro}}$ of \textit{promising cars}.

Before building the BB tree, we solve the CP model $\mathcal{M}^{\text{CP}}$ formed by bound $\underline{\tau}^e_{\pi_d}$ and a constructed \textit{RL-entry plan} $u^{\text{con}}_1=\{(k,1)\mid k\in V^{\text{pro}}\}\cup \{(k,0)\mid k\in V\backslash V^{\text{pro}}\}$, where each \textit{promising car} enters the RL once and the others never do, within a short time limit. If $\mathcal{M}^{\text{CP}}$ yields a feasible solution, the algorithm ends with the minimum makespan $\underline{\tau}^e_{\pi_d}$; if not, the BP-CP is run. %approach is truly activated.

%%%%%%Before initializing the BB tree, we solve a CP model $\mathcal{M}^{\text{CP}}$ formed by a makespan equal to the lower bound $\underline{\tau}^e_{\pi_d}$ and a constructed RL-usage plan $u^{\text{con}}_1$, where each promising car enters the RL once and all others not at all, i.e., $u^{\text{con}}_1=\{(k,1)\mid k\in V^{\text{pro}}\}\cup\{(k,0)\mid k\in V\backslash V^{\text{pro}}\}$, within a short time limit. If this model $\mathcal{M}^{\text{CP}}$ yields a feasible solution, then the algorithm ends with the confirmed optimal makespan $\underline{\tau}^e_{\pi_d}$; otherwise, the BP-CP framework is truly activated.

%%%%%%When BP-CP is activated, we first solve the maximization problem $\mathcal{M}^{\text{A}}(k^{\circ})$ for each RL-unused car $k^{\circ}\in V^R_{\times}$ to obtain all the inequalities \eqref{ieq2}. As per Proposition \ref{pro4}, this preprocessing allows us to determine the departure and arrival timestamps for specific cars and then fix the \textit{start} and \textit{end time} values of the related \textit{interval variables} when solving $\mathcal{M}^{\text{CP}}$ in stage $\mathcal{S}_3$.

The BP-CP uses a queue $Q$ that stores all unexplored BB nodes and is initialized with the root node. At the root node, the MP is created using a simple heuristic solution that lets cars loop in the MB-RL until their turn to exit. This solution sets an upper bound ($ub$) on the makespan and, for each car $k\in V$, a downstream-arrival timestamp bound $\tau^e_k$ used to build its MP-specific TSN graph. All derived inequalities \eqref{ieq1} are added to the MP at the root node.
%Once activated, the BP-CP procedure for each RMP uses a queue $Q$ that stores all unexplored BB nodes and is initialized with the root node. At the root node, the RMP is formed using a simple heuristic solution that lets cars loop in the MB-RL until their turn to exit. This solution sets an upper bound ($ub$) on the makespan and, for each car $k\in V$, a downstream-arrival timestamp bound $\tau^e_k$ used to build its RMP-specific TSN graph. All derived inequalities \eqref{ieq1} are added to the root RMP.

%%%%%%Afterward, the BP-CP procedure with each of the three RMPs proceeds using a queue $Q$ that stores all unexplored nodes in the BB tree and is initialized with the root node. To form the RMP of the root node, a rule-based heuristic (see \ref{app7}) creates an initial solution in which all RL-unused cars never enter the RL. This solution provides an upper bound ($ub$) for the CRSP, along with the arrival-timestamp bound $\tau^e_k$ used to build the three types of TSN graphs for each car $k\in V$. Based on the selected graph type, initial columns are identified, and inequalities \eqref{ieq1} and \eqref{ieq2} from preprocessing are added to complete the RMP of the root node.

The algorithm then repeatedly processes nodes from $Q$, always popping the node whose parent has the minimum RMP objective value ($\widehat{m}$). When multiple parents share this minimum value, we select the one with the highest number of integer-valued $\boldsymbol{z}$ variables in its RMP solution.

%%Each popped node first undergoes a CG iteration between $\mathcal{S}_1$ and $\mathcal{S}_2$: solving the RMP in $\mathcal{S}_1$ and adding new columns from $\mathcal{S}_2$. Once the iteration ends with the minimum makespan $\widehat{m}$ of the current RMP, we check whether any \textit{path} in $\Omega^{\prime}_{\pi_d}$ guides car $\pi_d$ to reach downstream before timestamp $\widehat{m}$. If so, that \textit{path} and the related arcs in the graphs of car $\pi_d$ are removed, and CG restarts.

Each popped node first undergoes CG to solve the RMP to optimality. Once completed, we check whether any \textit{path} in $\Omega^{\prime}_{\pi_d}$ directs car $\pi_d$ to reach the downstream shop before the current RMP objective value $\widehat{m}$. If such a \textit{path} exists, it and the related arcs in the TSN graphs of car $\pi_d$ are removed, and CG restarts. If not, the procedure attempts to seek an \textit{RL-entry plan}. It first checks the integrality of the $\boldsymbol{z}$-variable values for non-promising cars. If there is a fractional-valued variable in $\{z^k_n\mid k\in V\backslash V^{\text{pro}}, n\in N_k\}$, then the current node finishes its exploration, and two child nodes are added to $Q$ using one branching rule from Section \ref{sec4-4}. If all are integral, an \textit{RL-entry plan} $u^{\text{con}}_2=\{(k,\widehat{n}_k)\mid k\in V\}$ is constructed: for each car $k\in V\backslash V^{\text{pro}}$, its RL-entry count $\widehat{n}_k=\sum_{n\in N_k}{n\cdot \bar{z}^k_n}$ (integer); for each car $k\in V^{\text{pro}}$, $\widehat{n}_k=1$ if $\sum_{n\in N_k}{n\cdot \bar{z}^k_n}<1$, otherwise $\widehat{n}_k=\big \lceil \sum_{n\in N_k}{n\cdot \bar{z}^k_n} \big \rceil$. This plan is then passed to the embedded heuristic.

To start the heuristic, we first seek the minimum makespan $\widehat{m}^{\prime}$ between $\widehat{m}$ and $ub-1$ such that the pair $(\widehat{m}^{\prime},u^{\text{con}}_2)$ meets all existing cuts in constraints \eqref{m1-con9}. If found, the heuristic enumerates makespans from $\widehat{m}^{\prime}$ to $ub-1$. For each, it solves $\mathcal{M}^{\text{CP}}$ with $u^{\text{con}}_2$ under a short time limit. If $\mathcal{M}^{\text{CP}}$ is feasible, then $ub$ is updated to that makespan, and the enumeration ends. When $ub=\widehat{m}$, the heuristic and node exploration are complete. Otherwise, after this enumeration, %After this enumeration,
if any makespan-$u^{\text{con}}_2$ pairs are proven infeasible, their feasibility cuts \eqref{cut2} are collectively added to the MP, and CG restarts.

%To run the heuristic, we search for a makespan $\widehat{m}^{\prime}$ from $\widehat{m}$ to $ub-1$ such that its combination with $u^{\text{con}}_2$ satisfies all the existing valid cuts in constraints \eqref{m1-con9}. If found, the heuristic proceeds by enumerating each makespan from $\widehat{m}^{\prime}$ up to $ub-1$ to solve $\mathcal{M}^{\text{CP}}$ with $u^{\text{con}}_2$ under a short time limit. If $\mathcal{M}^{\text{CP}}$ yields a feasible solution at a specific makespan, then $ub$ is updated accordingly, and the enumeration stops. After the enumeration ends, if there are makespans along with $u^{\text{con}}_2$ proven infeasible, the corresponding cuts \eqref{cut2} are added to the RMP at a time, and CG restarts.

If no valid $\widehat{m}^{\prime}$ is found or CG restarts, we check all $\boldsymbol{z}$ variables for integrality. If all are integers, an \textit{RL-entry plan} $u_3=\{(k,\widehat{n}_k)\mid k\in V, \widehat{n}_k=\sum_{n\in N_k}{n\cdot \bar{z}^k_n}\}$ is formed and used with makespan $\widehat{m}$ to solve $\mathcal{M}^{\text{CP}}$ in $\mathcal{S}_2$. A feasible $\mathcal{M}^{\text{CP}}$ solution ends node exploration and updates $ub=\widehat{m}$ if $ub>\widehat{m}$.
If $\mathcal{M}^{\text{CP}}$ is infeasible, cut \eqref{cut2} for pair $(\widehat{m},u_3)$ is added to the MP, and CG restarts. If not all $\boldsymbol{z}$ integers, node exploration ends with two child nodes created for $Q$ via the second branching rule.

%%%%%%%%%Otherwise, if neither a valid makespan $\widehat{m}^{\prime}$ is found nor any new cuts are generated within the time limit, we skip the heuristic and verify whether all $\boldsymbol{z}$ variables are integer-valued. If they are, a real RL-usage plan $u^{\text{real}}_3=\{(k,\widehat{n}_k)\mid k\in V\}$ with $\widehat{n}_k=\sum_{k\in V}{\sum_{n\in N_k}{n\cdot \bar{z}^k_n}}$ is obtained and then used in $\mathcal{S}_3$ of BP-CP to solve $\mathcal{M}^{\text{CP}}$ with makespan $\lceil \widehat{m}\rceil$. If $\mathcal{M}^{\text{CP}}$ is feasible, the node exploration concludes, and $ub$ is updated to $\lceil \widehat{m}\rceil$ when $ub>\lceil \widehat{m}\rceil$. Conversely, if $\mathcal{M}^{\text{CP}}$ is infeasible, a valid cut \eqref{cut2} specific to $\lceil \widehat{m}\rceil$ and $u^{\text{real}}_3$ is added to the RMP, and CG continues. If not all $\boldsymbol{z}$ variables are integers, the node exploration ends with the creation of two new BB nodes via branching.

A BB node is pruned if its own $\widehat{m}>ub$ when CG ends. Each completed node exploration updates the global lower bound ($lb$). The algorithm returns the minimum makespan when $ub = lb$.
%%%%%%A BB node is pruned if its parent has m1 > ub or if its own m1 > ub when CG ends. Each completed node exploration updates the global lower bound (lb). The algorithm stops and returns the minimum makespan when ub = lb.

%%%%Each completed node exploration updates the global lower bound ($lb$). The algorithm terminates when $Q$ is empty or the overall time limit is reached, returning the optimal solution when $Q$ is empty.

\section{Computational results}\label{sec5}
All experiments in this section were conducted within one hour on a single 2.4 GHz AMD Rome 7532 processor with 64 GB of RAM. Section \ref{sec5-1} describes the test instances. Section \ref{sec5-2} reports the BP-CP performance for each MP and compares the results with Gurobi. Section \ref{sec5-3} provides managerial insights into the MB-RL by examining the impact of its size and comparing it with the MB in terms of resequencing capability and minimum makespan across various levels of resequencing complexity. All algorithms were coded in C++, using IBM ILOG CP Optimizer 22.1.1 for CP models and Gurobi 12.0.0 for LP and MIP models. All instances and detailed results are available at \href{https://sites.google.com/view/xinyiguo}{https://sites.google.com/view/xinyiguo}.
%%%%All experiments in this section were conducted within one hour on a single 2.40 GHz AMD Rome 7532 processor with 64 GB of RAM. Section \ref{sec5-1} describes the test instances. Section \ref{sec5-2} reports the performance of the BP-CP approach with each RMP compared to Gurobi. Section \ref{sec5-3} compares the MB-RL and the MB in terms of resequencing feasibility and minimum makespan across varying levels of resequencing complexity. All algorithms were coded in C++, using IBM ILOG CP Optimizer 22.1.1 for CP models and Gurobi 12.0.0 for LP and MIP models. All instances, code, and detailed computational results are available at \href{https://sites.google.com/view/xinyiguo}{https://sites.google.com/view/xinyiguo}.

%%%%%%%%%All experiments for the algorithm comparison and sensitivity analysis presented in this section were conducted within one hour on a single 2.40 GHz AMD Rome 7532 processor with 64 GB of RAM. Section \ref{sec5-1} details the problem instances used. Section \ref{sec5-2} compares the performance of our BP-CP approach with three RMPs against Gurobi. Section \ref{sec5-3} investigates how buffer size (with and without an RL) and the complexity of the downstream sequence affect resequencing feasibility and the minimal makespan. All algorithms were coded in C++, using IBM ILOG CP Optimizer 22.1.1 as the CP solver and Gurobi 12.0.0 to solve LP and MIP models. Our instances and detailed computational results are available at \href{https://sites.google.com/view/xinyiguo}{https://sites.google.com/view/xinyiguo.}

\subsection{Instance generation}\label{sec5-1}
Each instance is a downstream sequence $\Pi^{D}=[\pi_1,\pi_2,\dots,\pi_d]$ reshuffled from the upstream sequence $\Pi^{U}=[1,2,\dots,d]$ of $d$ cars. We create two datasets, A and B, that differ in resequencing feasibility. Given the MB-RL size setup, dataset A for the algorithm comparison includes only instances verified to be feasible by our proposed polynomial-time algorithm (see the online supplement). Dataset B contains instances without feasibility guarantees. It serves to examine how resequencing complexity affects the minimum buffer size required for the MB and the MB-RL to restore feasibility, along with the corresponding minimum makespan at that size.
%%%It serves to examine how the \textit{resequencing complexity index} ($\sigma$) impacts the minimum buffer size needed for the MB and the MB-RL to restore feasibility, along with the corresponding minimum makespan at that size.

%%%%%%%%%%%%%Each instance is a downstream sequence $\Pi^{D}=[\pi_1,\dots,\pi_p,\dots,\pi_d]$ rearranged from the upstream one $\Pi^{U}=[1,\dots,k,\dots,d]$ with $d$ cars, where $k\in V$ denotes the $k$-th car in the upstream shop, and $\pi_p\in V$ is the upstream index of the car at downstream position $p\in P$. %where $\pi_p\in V$ and $k\in V$ denote the $p$-th and $k$-th cars in the downstream and upstream shops, respectively.
%%%%%%%%%%%%%We create two datasets that differ in resequencing feasibility. Given the MB-RL size setup, the basic dataset includes only instances verified as feasible by Algorithm \ref{alg1}. It is used for algorithm comparison, analyzing the impact of MB-RL size, and comparing against the buffer without RL (i.e., MB). The extended dataset contains instances not guaranteed to be feasible. It serves to examine how the resequencing complexity $\sigma$ affects the minimum buffer size needed for both MB-RL and MB to achieve feasibility, as well as the corresponding minimum makespan under that size.

%%Following \citet{guo2025logic}, we generate 30 instances for each $d\in \{10, 20, 30, 40, 50, 60, 70, 80,$ $90, 100, 110, 120\}$, yielding 360 instances in the basic dataset.
For dataset A, we generate 30 instances for each $d\in \{20, 30, 40, 50, 60, 70, 80, 90, 100, 110, 120\}$, resulting in 330 instances overall. The number of forward lanes ($l^f$) and lane capacity ($\bar{q}$) of the MB-RL vary with $d$: %The MB-RL has one RL. Its number of forward lanes ($l^f$) and lane capacity ($\bar{q}$) vary with $d$:
for $d\leqslant30$, $l^f=3$, $\bar{q}=4$; for $40\leqslant d\leqslant 60$, $l^f=4$, $\bar{q}=5$; for $70\leqslant d\leqslant 90$, $l^f=5$, $\bar{q}=6$; and for $d\geqslant 100$, $l^f=6$, $\bar{q}=7$. In practice, the last upstream car rarely appears as the first downstream one. Therefore, we first divide $\Pi^{U}$ into $d/10$ segments and randomly permute the cars within each. After recombining these segments, either 5 cars (for $d\leqslant 60$) or 10 cars (for $d\geqslant 70$) are randomly selected for swapping, followed by a feasibility check for resequencing under the corresponding MB-RL size using our polynomial-time algorithm. This process repeats until a resequencing-feasible sequence, i.e., the desired $\Pi^{D}$, is obtained.
%This process repeats until a sequence with confirmed resequencing feasibility, i.e., the desired $\Pi^{D}$, is obtained.

In dataset B, each instance is assigned a level of resequencing complexity measured by \textit{resequencing complexity index} ($\sigma$). By Definition \ref{def1}, $\sigma$ reaches its maximum value $\sigma_{\text{max}}=d\cdot (d-1)/2$ when $\Pi^{D}=[d,d-1,\dots,1]$ is the complete reverse of $\Pi^{U}$. We consider $d\in \{30,50,70\}$ under four complexity levels, where $\sigma$ falls within the intervals: $[0.05\sigma_{\text{max}}, 0.1\sigma_{\text{max}}]$, $[0.15 \sigma_{\text{max}}, 0.2 \sigma_{\text{max}}]$, $[0.25 \sigma_{\text{max}}, 0.3\sigma_{\text{max}}]$, or $[0.35 \sigma_{\text{max}}, 0.4\sigma_{\text{max}}]$. For each $d$-$\sigma$ combination, 30 instances are generated using steps similar to those used for dataset A, but without checking resequencing feasibility. A sequence obtained after segment recombination and car swaps is accepted as $\Pi^{D}$ when its $\sigma$ lies within the specified interval.

\subsection{Algorithm comparison}\label{sec5-2}
Conducted on dataset A, the testing in this section first assesses the heuristic in BP-CP that employs the lower bound $\underline{\tau}^e_{\pi_d}$ and \textit{promising cars} from Section \ref{sec4-5} to explore CRSP-MS solutions. It then evaluates the performance of all the methods we used to solve the CRSP-MS.

To assess the heuristic, we solve instances using two BP-CP setups, both with the second MP (i.e., $\mathcal{M}_2$), which pretests suggested as promising for solving more instances. The first setup (``$M_2$'') runs the full BP-CP including the heuristic, whereas the second (``$M_2\backslash$H'') omits it. In $M_2\backslash$H, after CG terminates, the procedure moves to stage $\mathcal{S}_2$ to solve the CP model $\mathcal{M}^{\text{CP}}$ if all $\boldsymbol{z}$ variables are integers; if not, branching is applied.
%%To assess the heuristic, we solve instances under two algorithmic setups, both based on BP-CP with the second MP (i.e., $\mathcal{M}_2$), which pretests suggested as promising for solving more instances.

\begin{table}[htbp]
    \centering
    \caption{Performance of Heuristic Procedures Embedded in the BP-CP Approach with $\mathcal{M}_2$\label{tab5}}

\fontsize{10.5}{12}\selectfont
\makebox[\textwidth][c]{%
    \begin{tabular}{lcccrrcrrcrrcrr}
    \toprule
    \multirow{2}[4]{*}{$d$} &       & \multirow{2}[4]{*}{\#Ins} &       & \multicolumn{2}{c}{\#Ins.Opt} &       & \multicolumn{2}{c}{Time$^{(\text{Total})}$ (sec.)} &       & \multicolumn{2}{c}{Time$^{(\text{CP})}$ (sec.)} &       & \multicolumn{2}{c}{Time$^{(\text{First})}$ (sec.)} \\
\cmidrule{5-6}\cmidrule{8-9}\cmidrule{11-12}\cmidrule{14-15}          &       &       &       & $M_2$    & $M_2\backslash$H   &       & $M_2$    & $M_2\backslash$H   &       & $M_2$    & $M_2\backslash$H   &       & $M_2$    & $M_2\backslash$H \\
\cmidrule{1-1}\cmidrule{3-3}\cmidrule{5-6}\cmidrule{8-9}\cmidrule{11-12}\cmidrule{14-15}    %10    &       & 0     &       &      &      &       &      &      &       &      &      &       &      &  \\
    20    &       & 27    &       & 27    & 27    &       & 30.0    & 94.4  &       & 0.4   & 6.2   &       & 14.4  & 44.6 \\
    30    &       & 27    &       & 27    & 25    &       & 53.1  & 328.0   &       & 0.4   & 6.3   &       & 4.6   & 126.3 \\
    40    &       & 29    &       & 29    & 28    &       & 132.6 & 145.4 &       & 0.4   & 1.3   &       & 58.3  & 12.4 \\
    50    &       & 30    &       & 30    & 26    &       & 137.2 & 586.5 &       & 1.3   & 5.3   &       & 18.5  & 64.8 \\
    60    &       & 26    &       & 26    & 22    &       & 246.1 & 587.1 &       & 2.9   & 2.2   &       & 23.6  & 24.1 \\
    70    &       & 30    &       & 27    & 16    &       & 450.8 & 1,877.7 &       & 3.3   & 50.3  &       & 30.4  & 245.3 \\
    80    &       & 24    &       & 13    & 12    &       & 1,839.7 & 1,873.7 &       & 2.9   & 14.1  &       & 28.3  & 51.9 \\
    90    &       & 28    &       & 9     & 9     &       & 2,477.5 & 2,533.3 &       & 3.3   & 16.1  &       & 12.3  & 37.5 \\
    100   &       & 24    &       & 6     & 4     &       & 2,760.2 & 3,147.1 &       & 63.3  & 504.4 &       & 18.8  & 608.0 \\
    110   &       & 27    &       & 9     & 3     &       & 2,456.6 & 3,367.4 &       & 112.8 & 589.7 &       & 37.4  & 22.7 \\
    120   &       & 29    &       & 10    & 8     &       & 2,485.7 & 2,689.0  &       & 49.8  & 368.5 &       & 34.9  & 35.4 \\
\cmidrule{1-1}\cmidrule{3-3}\cmidrule{5-6}\cmidrule{8-9}\cmidrule{11-12}\cmidrule{14-15}    Overall &       & 301   &       & 213   & 180   &       & 1,181.1 & 1,566.3 &       & 21.9  & 142.2 &       & 25.6  & 115.7 \\
    \bottomrule
    \end{tabular}
}% end makebox
\end{table}

Table \ref{tab5} presents the computational results. Note that solving $\mathcal{M}^{\text{CP}}$ with bound $\underline{\tau}^e_{\pi_d}$ and the constructed \textit{RL-entry plan} $u^{\text{con}}_1$ allows some instances to be solved before executing BP-CP. Excluding those early-solved instances, column ``\#Ins'' lists the number of instances that require complete BP-CP execution. Among these, columns ``\#Ins.Opt'' and ``Time$^{(\text{Total})}$'' show, respectively, the number solved to optimality and the average runtime in seconds (sec.). Columns ``Time$^{(\text{CP})}$'' and ``Time$^{(\text{First})}$'' compare the average $\mathcal{M}^{\text{CP}}$ solution time per model and the average time to obtain the first improved solution over the initial heuristic one.

The results confirm the effectiveness of the heuristic and its integrated enhancements. %Column ``\#Ins'' shows that the bound $\underline{\tau}^e_{\pi_d}$ from Proposition \ref{pro3} is tight when the number of cars is small relative to the MB-RL size, as all 10-car instances are solved without running the complete BP-CP.
The full algorithm (i.e., $M_2$) solves more instances and requires less runtime than $M_2\backslash$H. In $M_2\backslash$H, $\mathcal{M}^{\text{CP}}$ is solved only for the actual \textit{RL-entry plans} $u_3$ generated after all $\boldsymbol{z}$ variables become integral. %obtained when all $\boldsymbol{z}$ variables are integral.
We observe that these plans tend to use the RL less often than the constructed plans and thus need fewer \textit{interval variables} with larger domains for the same makespan, which makes $\mathcal{M}^{\text{CP}}$ harder to solve. The results in ``Time$^{(\text{First})}$'' suggest that \textit{promising cars} are more likely to enter the RL. Enforcing this in the constructed \textit{RL-entry plans} helps the heuristic probe feasible CRSP-MS solutions earlier.

\begin{table}[htbp]
    \centering
    \caption{Comparison of All Algorithms Used to Solve the CRSP-MS on the Basic Dataset\label{tab1}}
    \setlength{\tabcolsep}{5pt}
    \fontsize{10.5}{12}\selectfont
\makebox[\textwidth][c]{%
    \begin{tabular}{lrrrrrrrrrrrrrrrrrr}
\toprule
    \multirow{2}[3]{*}{$d$} &       &  \multicolumn{4}{c}{\#Ins.Opt} &       & \multicolumn{4}{c}{Time$^{(\text{Total})}$ (sec.)} &       & \multicolumn{3}{c}{\#Nodes.Solved} &       & \multicolumn{3}{c}{Time$^{(\text{Node})}$ (sec.)} \\
\cmidrule{3-6}\cmidrule{8-11}\cmidrule{13-15}\cmidrule{17-19}          &       & \multicolumn{1}{c}{CF} & \multicolumn{1}{c}{$M_1$} & \multicolumn{1}{c}{$M_2$} & \multicolumn{1}{c}{$M_3$} &       & \multicolumn{1}{c}{CF} & \multicolumn{1}{c}{$M_1$} & \multicolumn{1}{c}{$M_2$} & \multicolumn{1}{c}{$M_3$} &       & \multicolumn{1}{c}{$M_1$} & \multicolumn{1}{c}{$M_2$} & \multicolumn{1}{c}{$M_3$} &       & \multicolumn{1}{c}{$M_1$} & \multicolumn{1}{c}{$M_2$} & \multicolumn{1}{c}{$M_3$} \\
\cmidrule{1-1}\cmidrule{3-6}\cmidrule{8-11}\cmidrule{13-15}\cmidrule{17-19}    %10    &      & 30    & 30    & 30    & 30    &       & 0.6   & 0.1   & 0.1   & 0.1   &       &       &       &       &       &       &       &  \\
    20    &         & 24    & 29    & 30    & 29    &       & 1,184.8  & 170.7  & 27.1  & 130.1  &       & 18.5  & 40.6  & 121.3  &       & 23.8  & 1.3   & 5.0  \\
    30      &       & 17    & 26    & 30    & 27    &       & 2,433.9  & 511.4  & 47.8  & 417.2  &       & 20.4  & 66.1  & 1,125.6  &       & 80.0  & 1.0   & 1.6  \\
    40      &       & 1     & 24    & 30    & 22    &       & 3,549.4  & 857.2  & 128.1  & 970.8  &       & 40.7  & 44.5  & 73.3  &       & 79.8  & 2.3   & 36.3  \\
    50    &       & \multirow{8}[1]{*}{0} & 26    & 30    & 24    &       & \multirow{8}[1]{*}{$>3,600$} & 536.2  & 137.2  & 733.2  &       & 9.8   & 69.5  & 81.5  &       & 67.8  & 2.0   & 30.2  \\
    60    &       &       & 23    & 30    & 25    &       &       & 897.0  & 213.4  & 718.5  &       & 10.7  & 47.3  & 1,208.0  &       & 111.9  & 5.2   & 27.4  \\
    70    &       &       & 6     & 27    & 20    &       &       & 3,041.3  & 450.8  & 1,363.3  &       & 23.3  & 10.0  & 649.6  &       & 530.1  & 66.2  & 173.3  \\
    80     &       &       & 12    & 19    & 16    &       &       & 2,307.9  & 1,472.0  & 1,730.9  &       & 9.8   & 5.7   & 153.1  &       & 538.8  & 341.2  & 171.2  \\
    90     &       &       & 7     & 11    & 10    &       &       & 2,848.8  & 2,312.4  & 2,431.2  &       & 10.1  & 5.2   & 48.5  &       & 618.6  & 478.6  & 272.5  \\
    100   &       &       & 7     & 12    & 12    &       &       & 2,896.5  & 2,208.4  & 2,216.7  &       & 6.8   & 5.5   & 181.5  &       & 748.4  & 751.2  & 709.6  \\
    110   &       &       & 6     & 12    & 11    &       &       & 3,109.8  & 2,211.1  & 2,343.4  &       & 6.0   & 5.7   & 75.3  &       & 772.4  & 806.3  & 675.0  \\
    120    &       &       & 5     & 11    & 10    &       &       & 3,186.2  & 2,402.9  & 2,465.9  &       & 6.6   & 8.4   & 480.6  &       & 773.5  & 562.4  & 504.2  \\
\cmidrule{1-1}\cmidrule{3-6}\cmidrule{8-11}\cmidrule{13-15}\cmidrule{17-19}  Overall   &       & 42    & 171   & 242   & 206   &       &       & 1,851.2  & 1,055.6  & 1,411.0  &       & 14.8  & 28.0  & 381.7  &       & 395.0  & 274.3  & 236.9  \\
    \bottomrule
    \end{tabular}
}

\end{table}

We then ran the full BP-CP algorithm with the first and third MPs, denoted $M_1$ and $M_3$, on dataset A and compared their performance with that of the compact formulation solved by Gurobi (``CF''). The results are summarized in Table \ref{tab1}. For all instances, columns ``\#Ins.Opt'' and ``Time$^{(\text{Total})}$'' report the number solved to optimality and the average runtime, respectively. Comparisons among the MPs within BP-CP consider only instances that required complete BP-CP execution (i.e., those counted in column ``\#Ins'' of Table \ref{tab5}). For these, Table \ref{tab1} gives the average number of solved BB nodes per instance (``\#Nodes.Solved'') and the average solution time per node (``Time$^{(\text{Node})}$'').

Columns ``\#Ins.Opt'' and ``Time$^{(\text{Total})}$'' show that our BP-CP approach solves instances with up to 120 cars within one hour and greatly outperforms the solver. Across 330 instances, BP-CP variants $M_1$, $M_2$, and $M_3$ require less runtime than CF and solve 129, 200, and 164 more to optimality, respectively. Among them, $M_2$ performs the best. It solves all 50-car instances (the scale of a real-world one-hour production batch in \citet{guo2025logic}), whereas neither $M_1$ nor $M_3$ can. From $\mathcal{M}_1$ to $\mathcal{M}_3$, successive constraint relaxations ease the RMP but weaken its lower bound, leading to more BB nodes for convergence. In comparison, $M_2$ spends less per-node solution time than $M_1$ and explores fewer nodes overall than $M_3$, balancing bound tightness and model tractability. Columns ``\#Node.Sol'' and ``Time$^{(\text{Node})}$'' also reflect this trade-off: $M_3$ processes the most nodes with the shortest per-node solution time due to the weakest lower bound of the simplest $\mathcal{M}_3$.

\begin{table}[htbp]
    \centering
    \caption{Comparison of the Three RMPs within BP-CP\label{tab2}}
    \setlength{\tabcolsep}{5pt}  % 默认是 6pt，减小后表格会变窄
    \fontsize{10.5}{12}\selectfont
\makebox[\textwidth][c]{%
    \begin{tabular}{lrrrrrrrrrrrrrrrrrrrr}
    \toprule
    \multirow{2}[4]{*}{$d$} &       & \multicolumn{3}{c}{\#Ins.First (Gap$^{\text{F}}$-\%)} &       & \multicolumn{3}{c}{Time$^{(\text{First})}$ (sec.)} &       & \multicolumn{3}{c}{Time$^{(\text{RMP})}$ (sec.)} &       & \multicolumn{3}{c}{\#Cut$^{(\text{CP})}$} &       & \multicolumn{3}{c}{Gap$^{\text{R}}$ (\%)} \\
\cmidrule{3-5}\cmidrule{7-9}\cmidrule{11-13}\cmidrule{15-17}\cmidrule{19-21}          &       & \multicolumn{1}{c}{$M_1$} & \multicolumn{1}{c}{$M_2$} & \multicolumn{1}{c}{$M_3$} &       & \multicolumn{1}{c}{$M_1$} & \multicolumn{1}{c}{$M_2$} & \multicolumn{1}{c}{$M_3$} &       & \multicolumn{1}{c}{$M_1$} & \multicolumn{1}{c}{$M_2$} & \multicolumn{1}{c}{$M_3$} &       & \multicolumn{1}{c}{$M_1$} & \multicolumn{1}{c}{$M_2$} & \multicolumn{1}{c}{$M_3$} &       & \multicolumn{1}{c}{$M_1$} & \multicolumn{1}{c}{$M_2$} & \multicolumn{1}{c}{$M_3$} \\
\cmidrule{1-1}\cmidrule{3-5}\cmidrule{7-9}\cmidrule{11-13}\cmidrule{15-17}\cmidrule{19-21}    20    &       & 26(2.5) & 27(1.5) & 27(2.7) &       & 27.2  & 14.4  & 2.1   &       & 0.2   & 0.0   & {0.0\,\ }   &       & 8.6   & 52.1  & 40.1  &       & 0.6   & 1.4   & 2.7  \\
    30    &       & 24(1.6) & 27(1.1) & 27(2.1) &       & 9.2   & 4.6   & 1.1   &       & 0.2   & 0.0   & {0.0\,\ }   &       & 4.5   & 30.2  & 73.0  &       & 1.3   & 2.1   & 3.2  \\
    40    &       & 25(1.7) & 29(1.2) & 29(2.1) &       & 50.1  & 58.3  & 5.5   &       & 0.8   & 0.1   & {0.0\,\ }   &       & 13.1  & 49.4  & 45.4  &       & 1.1   & 1.2   & 1.6  \\
    50    &       & 27(1.2) & 30(0.6) & 29(2.1) &       & 32.0  & 18.5  & 3.3   &       & 0.8   & 0.1   & {0.0\,\ }   &       & 3.9   & 15.2  & 32.5  &       & 0.7   & 0.9   & 1.6  \\
    60    &       & 20(1.6) & 25(1.6) & 25(1.3) &       & 59.1  & 23.6  & 2.8   &       & 1.5   & 0.2   & {0.0\,\ }   &       & 3.7   & 15.3  & 33.3  &       & 0.5   & 0.8   & 1.4  \\
    70    &       & 7(0.7) & 28(0.3) & 30(0.5) &       & 352.3  & 30.4  & 55.5  &       & 8.6   & 0.3   & {0.0\,\ }   &       & 3.6   & 4.0   & 156.4  &       & 1.1   & 0.6   & 0.2  \\
    80    &       & 6(0.4) & 14(0.1) & 19(0.6) &       & 616.5  & 28.3  & 17.2  &       & 7.2   & 0.4   & {0.0\,\ }   &       & 3.9   & 3.3   & 93.6  &       & 0.0   & 0.2   & 0.1  \\
    90    &       & 5(0.7) & 9(0.2) & 16(1.0) &       & 393.6  & 12.3  & 9.9   &       & 7.6   & 0.5   & {0.0\,\ }   &       & 3.6   & 4.2   & 55.7  &       & 0.7   & 0.3   & 0.2  \\
    100   &       & 1{(--)}\textcolor{white}{.0}  & 5{(0)}\textcolor{white}{.0}  & 13{(0)}\textcolor{white}{.0} &       & 1,189.1  & 18.8  & 43.4  &       & 16.7  & 0.5   & {0.0\,\ }   &       & 1.5   & 2.1   & 360.5  &       & 0.0   & 0.0   & 0.0  \\
    110   &       & 2(0.4) & 10(0)\textcolor{white}{.0} & 13(0.8) &       & 1,258.1  & 37.4  & 29.7  &       & 18.4  & 0.5   & {0.0\,\ }   &       & 1.6   & 7.4   & 189.3  &       & 0.7   & 1.0   & 0.1  \\
    120   &       & 4(0)\textcolor{white}{.0}  & 10(0)\textcolor{white}{.0} & 14(1.0) &       & 590.1  & 34.9  & 18.8  &       & 18.2  & 0.6   & {0.0\,\ }   &       & 1.7   & 12.7  & 85.1  &       & 0.0   & 0.0   & 0.0  \\
\cmidrule{1-1}\cmidrule{3-5}\cmidrule{7-9}\cmidrule{11-13}\cmidrule{15-17}\cmidrule{19-21}    Overall &       & 147(1.1) & 214(0.6) & 242(1.3) &       & 416.1  & 25.6  & 17.2  &       & 7.3   & 0.3   & {0.0\,\ }   &       & 4.5   & 17.8  & 105.9  &       & 0.6   & 0.8   & 1.0  \\
    \bottomrule
    \end{tabular}
}

\end{table}

Table \ref{tab2} further compares the three MPs within BP-CP, excluding early-solved instances.  %(e.g., all 10-car cases).
The first six columns concern the first improved solution found by BP-CP relative to the initial heuristic solution. Column ``\#Ins.First'' counts the instances where such a solution is found. The makespan gap between the first improved solution ($\widehat{m}^{\text{Fir}}$) and the optimal ($\widehat{m}^{*}$) is computed as $100\cdot (\widehat{m}^{\text{Fir}}-\widehat{m}^{*})/\widehat{m}^{*}$, and the values in parentheses under ``Gap$^{\text{F}}$'' show its average over instances where both $\widehat{m}^{\text{Fir}}$ and $\widehat{m}^{*}$ are available; ``--'' indicates that no instances are solved to optimality. Column ``Time$^{(\text{First})}$'' gives the average time to find the first improved solution across the instances counted in ``\#Ins.First''. For all BP-CP-executed instances, columns ``Time$^{(\text{RMP})}$'' and ``\#Cut$^{(\text{CP})}$'' report the average solution time per RMP and the average number of feasibility cuts \eqref{cut2} added per instance. Among those solved optimally, column ``Gap$^{\text{R}}$'' shows the average gap $100\cdot (\widehat{m}^{*}-\widehat{m}^{\text{root}})/m^{*}$ between the root-node lower bound $\widehat{m}^{\text{root}}$ and the optimal makespan $\widehat{m}^{*}$.%, calculated as $100\cdot (\widehat{m}^{*}-\widehat{m}^{\text{root}})/m^{*}$.

%%%%%%%%%%%Table \ref{tab2} provides a more detailed comparison of the three RMPs used in BP-CP. Instances solved without activating BP-CP are excluded from this analysis. The first six columns report the results of the first improved solution found during BP-CP execution. Column ``\#Ins.First'' shows the number of instances for which such an improved solution was obtained. To assess solution quality, the values in parentheses under ``Gap$^{\text{F}}$'' give the average percentage gap between the makespan $\widehat{m}^{\text{Fir}}$ of the first improved solution and that $\widehat{m}^*$ of the optimal one for those instances solved to optimality, computed as $100\cdot(\widehat{m}^{\text{Fir}}-\widehat{m}^*)/\widehat{m}^*$. Across all instances considered in ``\#Ins.First'', the ``Time$^{(\text{First})}$'' columns list the average time to find the first improved solution. The average time to solve each RMP appears in the columns under ``Time$^{(\text{RMP})}$''. For instances solved to optimality by BP-CP, column ``Gap$^{\text{R}}$'' reports the average percentage gap between the optimal makespan $\widehat{m}^*$ and the lower bound $\widehat{m}^{\text{root}}$ obtained at the root node, calculated as $100\cdot(\widehat{m}^*-\widehat{m}^{\text{root}})/\widehat{m}^*$. Lastly, column ``\#Cut$^{(\text{CP})}$'' displays the average number of valid cuts \eqref{cut2} generated per instance.

Based on the first six columns, the simplest $\mathcal{M}_3$ enables BP-CP to find an improved solution for the most instances in the shortest time. In terms of solution quality, those provided by $\mathcal{M}_2$ are the best, with an average gap of only 0.6\% from the optimal makespan. The long $\mathcal{M}^{\text{LP}}_1$ solution time shown in the ``Time$^{(\text{RMP})}$'' results suggests that a very tight MP degrades BP-CP performance. The ``\#Cut$^{(\text{CP})}$'' results, as measured by feasibility-cut counts, reveal the drawback of an over-relaxed MP: the frequent generation of infeasible partial integer solutions. Even with a 0.2\% increase in the makespan gap between the root node and the optimal solution, $\mathcal{M}_2$ is preferable to $\mathcal{M}_1$ %adopting $\mathcal{M}^{\text{LP}}_2$ over $\mathcal{M}^{\text{LP}}_1$ is worthwhile
because it substantially reduces the RMP solution time. Instead, replacing $\mathcal{M}_2$ with $\mathcal{M}_3$ is not advisable because the marginal time savings from $\mathcal{M}_3$ cannot offset the loss in bound quality.

Collectively, the BP-CP approach benefits most from a moderately relaxed MP (e.g., $\mathcal{M}_2$). This in-between formulation avoids both the high solution time of a tighter but intractable MP and the excessive growth of the BB tree caused by a weaker MP. It strikes an effective balance between the number of BB nodes created and the solution time per node.

%%%%%Collectively, the BP-CP approach performs best with a moderately relaxed RMP (e.g., $\mathcal{M}^{\text{LP}}_2$). Such an in-between formulation reduces the high solution time of the tighter RMP that comes directly from the compact formulation, and avoids excessive growth of the BB tree associated with the weakest RMP when proving optimality. It achieves a balance between the number of BB nodes created and the solving time per node.

%%%%\subsection{Sensitivity analysis (\textcolor{red}{Comparison of the MB-RL and the MB})}\label{sec5-3}
%%%\subsection{Comparison of the MB-RL and the MB}\label{sec5-3}
\subsection{Managerial Insights into the MB-RL}\label{sec5-3}
This section first examines how MB-RL size affects both BP-CP performance and the minimum makespan. It then evaluates the RL by comparing MB-RL with traditional MB on resequencing capability and efficiency across various difficulty levels, followed by managerial insights for adopting the MB-RL from the results. For each test instance, the minimum makespan under the MB-RL is obtained by running full BP-CP with $\mathcal{M}_2$ for one hour; under the MB, it is derived from the last conclusion of Proposition \ref{pro3}. The MB-RL size settings given in Section \ref{sec5-1}, defined by the total number of lanes ($\bar{l}$) and lane capacity ($\bar{q}$), are considered the baseline buffer size (``Base.Size'').

\subsubsection*{Impact of MB‑RL Size on Algorithm Performance and Makespan}
Tests to examine the effects of varying the MB-RL size were conducted on instances with $d\in \{30, 50, 70\}$ from dataset A. Keeping one RL, we independently added 1 to 4 extra forward lanes and cells per lane (including the RL) to the baseline-sized MB-RL for each $d$, then solved the CRSP-MS for each enlarged MB-RL.

Table \ref{tab4} reports the results. Columns ``$\Delta\bar{q}$'' and ``$\Delta l^f$'' list, respectively, the extra cells added per lane and the number of additional forward lanes compared to the baseline. The BP-CP performance is measured by the number of instances solved to optimality (``\#Ins.Opt'') and the average total runtime (``Time''). To assess the effects on makespan, we calculate the relative change $100\cdot (\widehat{m}^* - \widehat{m}^*_B)/\widehat{m}^*_B$, where $\widehat{m}^*$ is the minimum makespan under each enlarged MB-RL and $\widehat{m}^*_B$ under the baseline. For all instances solved optimally under both sizes, column ``$\Delta \widehat{m}^*$'' shows the average percentage change, with ``$+$''/``$-$'' indicating an increase/decrease in the average makespan relative to the baseline-sized MB-RL.

\begin{table}[htbp]
    \centering
    \caption{Effects of Changing the MB-RL Size on BP-CP Performance and the Minimum Makespan\label{tab4}}
    \setlength{\tabcolsep}{5pt}
    \fontsize{11}{12.5}\selectfont
\makebox[\textwidth][c]{%

\begin{tabular}{ccrccrrcccrr}
    \toprule
    \multirow{2}[4]{*}{$d$} & \multirow{2}[4]{*}{\makecell{Base.Size\\($\bar{l}$,$\bar{q}$)}} &       & \multicolumn{4}{c}{Increase $\bar{q}$} &       & \multicolumn{4}{c}{Increase $l^f$} \\
\cmidrule{4-7}\cmidrule{9-12}          &       &       & $\Delta\bar{q}$    & \#Ins.Opt & \multicolumn{1}{c}{Time (sec.)} & \multicolumn{1}{c}{$\Delta \widehat{m}^{*}$ (\%)} &       & $\Delta l^f$    & \#Ins.Opt & \multicolumn{1}{c}{Time (sec.)} & \multicolumn{1}{c}{$\Delta \widehat{m}^{*}$ (\%)} \\
\cmidrule{1-2}\cmidrule{4-7}\cmidrule{9-12}    \multirow{4}[2]{*}{30} & \multirow{4}[2]{*}{$\bar{l}=4, \bar{q}=4$} &       & 1     & 30    & 30.6  & +3.3 &       & 1     & 30    & 1.3   & $-3.1$ \\
          &       &       & 2     & 27    & 511.6 & +6.8 &       & 2     & 30    & 0.2   & $-4.4$ \\
          &       &       & 3     & 24    & 800.8 & +11.8 &       & 3     & 30    & 0.1   & $-4.3$ \\
          &       &       & 4     & 21    & 1,103.8 & +18.8 &       & 4     & 30    & 0.1   & $-4.5$ \\
\cmidrule{1-2}\cmidrule{4-7}\cmidrule{9-12}    \multirow{4}[2]{*}{50} & \multirow{4}[2]{*}{$\bar{l}=5, \bar{q}=5$} &       & 1     & 26    & 578.5 & +3.0 &       & 1     & 30    & 1.7   & $-3.0$ \\
          &       &       & 2     & 23    & 978.9 & +6.0 &       & 2     & 30    & 0.4   & $-3.4$ \\
          &       &       & 3     & 22    & 1,001.5 & +9.9 &       & 3     & 30    & 0.4   & $-3.5$ \\
          &       &       & 4     & 19    & 1,640.5 & +15.6 &       & 4     & 30    & 0.4   & $-3.5$ \\
\cmidrule{1-2}\cmidrule{4-7}\cmidrule{9-12}    \multirow{4}[2]{*}{70} & \multirow{4}[2]{*}{$\bar{l}=6, \bar{q}=6$} &       & 1     & 26    & 534.9 & +1.3 &       & 1     & 25    & 608.8 & $-1.1$ \\
          &       &       & 2     & 23    & 867.0 & +2.2 &       & 2     & 29    & 124.1 & $-1.2$ \\
          &       &       & 3     & 23    & 847.6 & +3.2 &       & 3     & 30    & 0.8   & $-1.2$ \\
          &       &       & 4     & 22    & 988.6 & +9.3 &       & 4     & 30    & 0.9   & $-0.8$ \\
    \bottomrule
    \end{tabular}
}
\end{table}

%%%The results show that adding forward lanes more effectively improves BP-CP performance and reduces the minimum makespan than increasing lane capacity. Extra lanes help separate \textit{conflicting cars} across different forward lanes, reducing reliance on the RL and bringing the makespan closer to the lower bound in Proposition \ref{pro3}. Consequently, more instances can be solved without complete BP-CP execution. In contrast, larger lane capacity offers no benefits. It increases intra-lane travel time without reducing RL usage, which raises the makespan and requires more computational effort for BP-CP to converge.

The results show that adding forward lanes more effectively improves BP-CP performance and reduces the minimum makespan than increasing lane capacity. Extra lanes help separate \textit{conflicting cars} across different forward lanes, reducing reliance on the RL and bringing the makespan closer to the lower bound in Proposition \ref{pro3}. Consequently, more instances can be solved without complete BP-CP execution. In contrast, larger lane capacity increases intra-lane travel time without decreasing RL usage, thereby raising makespan and requiring more computational effort for BP-CP to converge.

\subsubsection*{MB‑RL versus MB: Resequencing Capability and Efficiency}
%\subsubsection*{Resequencing Capability}
%\subsubsection*{Resequencing Efficiency}

Before comparing the MB-RL with the MB, we define them as identical in size if they share the same total number of lanes ($\bar{l}$) and the same lane capacity ($\bar{q}$). In other words, the RL of the MB-RL is replaced by a forward lane in the same-sized MB.

Using dataset B, the comparison between the two buffers aims to evaluate their resequencing performance across different levels of resequencing complexity, which are reflected in the \textit{resequencing complexity index} ($\sigma$) for each instance. Because some instances in dataset B may be infeasible when using the baseline-sized buffer to resequence, they were first identified for each buffer using Proposition \ref{pro1}. For each infeasible instance, we incrementally increased the number of forward lanes from the baseline, keeping lane capacity fixed (as expansion in $\bar{q}$ previously proved ineffective), until feasibility was restored or up to 20 extra lanes were added. Table \ref{tab3} shows, for each buffer, by $d$ and $\sigma$ range, the number of infeasible instances (``Ins.Infea'') and the average number of extra forward lanes required per such instance to regain feasibility (``$\Delta l^f$'').
%%%Conducted on the extended dataset, the comparative test evaluates the resequencing capability and efficiency of the MB-RL and the MB under varying levels of resequencing complexity, which are reflected in the \textit{resequencing complexity index} ($\sigma$) for each instance. Because instances in this dataset may be infeasible when using the baseline-sized buffer to resequence, we first identified these for each buffer using Proposition \ref{pro1}. For each infeasible instance, we incrementally added forward lanes to the baseline-sized buffer, keeping lane capacity $\bar{q}$ unchanged (as mentioned before, expansion in $\bar{q}$ is ineffective), until feasibility was restored or 20 extra lanes were added. %For each buffer,
%%%Table \ref{tab3} reports, by $d$ and $\sigma$, the number of infeasible instances (``Ins.Infea'') and the average minimum number of extra forward lanes required per such instance to regain feasibility (``$\Delta l^f$'').

\begin{table}[htbp]
    \centering
    \caption{Comparison between the MB-RL and the MB Across Different Levels of Resequencing Complexity\label{tab3}}
\setlength{\tabcolsep}{5pt}  % 默认是 6pt，减小后表格会变窄
    \fontsize{10.5}{12}\selectfont
\makebox[\textwidth][c]{%
    \begin{tabular}{ccccccccccccccccccc}
    \toprule
    $d$     &       & \multicolumn{5}{c}{30 (Base.Size $\bar{l}=4$, $\bar{q}=4$)}                &       & \multicolumn{5}{c}{50 (Base.Size $\bar{l}=5$, $\bar{q}=5$)}                &       & \multicolumn{5}{c}{70 (Base.Size $\bar{l}=6$, $\bar{q}=6$)} \\
\cmidrule{3-7}\cmidrule{9-13}\cmidrule{15-19}          &       & \multicolumn{2}{c}{Ins.Infea} &       & \multicolumn{2}{c}{$\Delta l^f$} &       & \multicolumn{2}{c}{Ins.Infea} &       & \multicolumn{2}{c}{$\Delta l^f$} &       & \multicolumn{2}{c}{Ins.Infea} &       & \multicolumn{2}{c}{$\Delta l^f$} \\
\cmidrule{3-4}\cmidrule{6-7}\cmidrule{9-10}\cmidrule{12-13}\cmidrule{15-16}\cmidrule{18-19}    $\sigma$    &       & MB-RL & MB    &       & MB-RL & MB    &       & MB-RL & MB    &       & MB-RL & MB    &       & MB-RL & MB    &       & MB-RL & MB \\
\cmidrule{1-1}\cmidrule{3-4}\cmidrule{6-7}\cmidrule{9-10}\cmidrule{12-13}\cmidrule{15-16}\cmidrule{18-19}    $[0.05\sigma_{\text{max}},0.1\sigma_{\text{max}}]$  &       & 0     & 8     &       & 0     & 1.0     &       & 0     & 9     &       & 0     & 1.2   &       & 5     & 15    &       & 1.8   & 2.1 \\
    $[0.15\sigma_{\text{max}},0.2\sigma_{\text{max}}]$  &       & 4.0     & 27    &       & 1.0     & 1.6   &       & 30    & 30    &       & 2.9   & 4.0     &       & 30    & 30    &       & 4.9   & 6.3 \\
    $[0.25\sigma_{\text{max}},0.3\sigma_{\text{max}}]$  &       & 29    & 30    &       & 2.0     & 3.4   &       & 30    & 30    &       & 4.1   & 5.6   &       & 30    & 30    &       & 5.5   & 7.4 \\
    $[0.35\sigma_{\text{max}},0.4\sigma_{\text{max}}]$  &       & 30    & 30    &       & 3.0     & 4.9   &       & 30    & 30    &       & 4.7   & 6.8   &       & 30    & 30    &       & 5.7   & 8.6 \\
    \bottomrule
    \end{tabular}
}
\end{table}

We observe that the MB-RL consistently outperforms the MB in resequencing capability across all levels of complexity. At the baseline size, it yields fewer infeasible instances and needs fewer additional lanes to restore feasibility. At the highest complexity ($d=70$, $\sigma\in [0.35\sigma_{\text{max}},0.4\sigma_{\text{max}}]$), the MB-RL restores feasibility for all instances with about 34\% fewer extra lanes than the MB (5.7 versus 8.6), and this benefit increases as complexity decreases.
%%%It can be seen that the MB-RL consistently outperforms the MB in resequencing capability across all levels of resequencing complexity. Of the baseline size, it yields fewer infeasible instances and requires fewer added forward lanes to restore feasibility. At the highest resequencing complexity ($d=70$, $\sigma$ at 35\% to 50\% of $d\cdot(d-1)/2$), the MB-RL achieves full feasibility with about 34\% fewer extra lanes than the MB (5.7 versus 8.6), and this advantage increases as complexity decreases.

Managerially, incorporating an RL can replace several forward lanes to achieve the same resequencing capability. It effectively reduces buffer construction costs and enhances operational flexibility, which is a key benefit in JIT production with frequent and complex sequence adjustments.

%%%%%%%%%It can be observed that the MB-RL consistently outperforms MB in resequencing feasibility under the same baseline size. It results in fewer infeasible instances for resequencing and requires fewer extra forward lanes to restore feasibility. At the highest complexity level ($d = 70$, $\sigma=35-40\%$ of $d\cdot (d-1)/2$), the MB-RL achieves full feasibility with at least 34\% fewer extra lanes than the MB (5.7 versus 8.6). This advantage grows as complexity decreases. For $d\leqslant 50$ and $\sigma< 0.15\cdot d\cdot (d-1)/2$, the MB-RL attains complete feasibility without extra lanes, whereas the MB still requires them. Thus, replacing one forward lane with an RL greatly improves buffer robustness in terms of resequencing feasibility across different complexity levels.

\begin{figure}[htbp]
    \centering

 \noindent\makebox[\linewidth][c]{
    \centering
    \scalebox{0.95}{
 \begin{tikzpicture}
    \begin{axis}[
        axis y line*=left,
        width=6.6cm,
		height=6.2cm,
        ybar,
		ybar=4*\pgflinewidth,
		major x tick style = transparent,
		title={(a) $d=30$},
		title style={yshift=-2mm},
		enlargelimits=0.15,
		ylabel={\normalsize \textbf{Avg. $\widehat{m}^{*}$ (TU)}},
		ylabel style={at={(axis description cs:-0.22,0.5)},anchor=center},
		symbolic x coords={5-10\%, 15-20\%, 25-30\%, 30-35\%},
		grid=major,major grid style={dashed},
		xlabel = {\normalsize Percentage of $\sigma_{\text{max}}$},
                    xlabel style = {yshift=-1mm},
		tick label style={font=\small},
		legend style={at={(0.42,0.98)},
						anchor=north,legend columns=-1, font=\fontsize{8}{8}\selectfont
                        },
					nodes near coords,
					every node near coord/.append style={font=\footnotesize},
					nodes near coords align={vertical},
					]
        \addplot[black,fill=white,%pattern=north west lines, pattern color=red,
        every node near coord/.append style=
					{red,font=\fontsize{8}{12}\selectfont, xshift=-0.9mm, yshift=-1mm}] coordinates {(5-10\%, 42.0) (15-20\%, 47.9) (25-30\%, 55.4) (30-35\%, 60.5)};
        \addplot[black,fill=black!50,every node near coord/.append style=
					{black,font=\fontsize{8}{12}\selectfont, yshift=-1mm, xshift=+0.8mm}] coordinates {(5-10\%,  40.6) (15-20\%, 47.5) (25-30\%, 55.4) (30-35\%, 60.4)};
					\legend{MB-RL, MB}
        %\node[anchor=north west] at (axis description cs:1,0) {x1};
    \end{axis}
    \begin{axis}[
                    height = 6.2cm,
					axis y line*=right,
					axis x line=none,
					width=6.6cm,
					axis line style={-},
					nodes near coords,
					every node near coord/.append style={
						rotate=20,
						font=\bfseries\fontsize{11}{8}\selectfont,
                        anchor = east,
						%anchor=north west,
						%yshift=+2mm,
                        xshift=+0.6mm
					},
					ylabel = {},
					yticklabel style={
						font=\small\bfseries, rotate=20,
					},
					symbolic x coords={5-10\%, 15-20\%, 25-30\%, 30-35\%},
					tick label style={font=\small},
					scaled y ticks = false,
					legend cell align=left,
					legend style={at={(0,1.38)},
						anchor=north,legend columns=1, %font=\scriptsize
                        font=\fontsize{6}{8}\selectfont}
					]
					\addplot[color=black,mark=o,sharp plot] coordinates {(5-10\%,30) (15-20\%,13) (25-30\%,15) (30-35\%,13)};
					%\legend{\textbf{\#Ins.Opt}}
				\end{axis}
\end{tikzpicture}
\hspace{-3mm}
\begin{tikzpicture}
    \begin{axis}[
					axis y line*=left,
                    width=6.6cm,
		              height=6.2cm,
					ybar,%=8pt, % configures ‘bar shift’
					ybar=4*\pgflinewidth,
					major x tick style = transparent,
					title={(b) $d=50$},
					title style={yshift=-2mm},
					enlargelimits=0.15,
					ylabel={},
					symbolic x coords={5-10\%, 15-20\%, 25-30\%, 30-35\%},
					grid=major,major grid style={dashed},
					xlabel = {\normalsize Percentage of $\sigma_{\text{max}}$},
                    xlabel style = {yshift=-1mm},
					tick label style={font=\small},
					%x tick label style={rotate=10},
					legend cell align=left,
					legend style={at={(0.5,-0.335))},
						anchor=north,legend columns=2, font=\scriptsize},
					nodes near coords,
					every node near coord/.append style={font=\footnotesize},
					nodes near coords align={vertical},
					]
            \addplot[black,fill=white,%pattern=north west lines, pattern color=red,
            every node near coord/.append style=
					{red,font=\fontsize{8}{12}\selectfont, xshift=-1mm, yshift=-1mm}] coordinates {(5-10\%, 66.8) (15-20\%, 89.3) (25-30\%, 97.7) (30-35\%, 100.7)};
			\addplot[black,fill=black!50,,every node near coord/.append style=
					{black,font=\fontsize{8}{12}\selectfont, yshift=-1mm, xshift=+1mm}] coordinates {(5-10\%,  65.9) (15-20\%, 89.3) (25-30\%, 97.7) (30-35\%, 100.7)};
    \end{axis}
    \begin{axis}[
                    height=6.2cm,
					axis y line*=right,
					axis x line=none,
					width=6.6cm,
					enlarge x limits=0.2,
					axis line style={-},
					nodes near coords,
					every node near coord/.append style={
						rotate=20,
						font=\bfseries\fontsize{11}{8}\selectfont,
                        anchor = east,
						%anchor=north west,
						yshift=+2mm,
                        xshift=-0.2mm
					},
					ylabel = {},
					yticklabel style={
						font=\small\bfseries, rotate=20,
					},
					symbolic x coords={5-10\%, 15-20\%, 25-30\%, 30-35\%},
					tick label style={font=\small},
					scaled y ticks = false,
					legend cell align=left,
					legend style={at={(0.26,1.11)},
						anchor=north,legend columns=-1, font=\bfseries\scriptsize}
					]
        \addplot[color=black,mark=o,sharp plot] coordinates {(5-10\%,30) (15-20\%,18) (25-30\%,11) (30-35\%,6)};
    \end{axis}
\end{tikzpicture}
\hspace{-3mm}
\begin{tikzpicture}
    \begin{axis}[
					axis y line*=left,
                    width=6.6cm,
		              height=6.2cm,
					ybar,%=8pt, % configures ‘bar shift’
					ybar=4*\pgflinewidth,
					major x tick style = transparent,
					title={(c) $d=70$},
					title style={yshift=-2mm},
					enlargelimits=0.15,
					ylabel={},
					symbolic x coords={5-10\%, 15-20\%, 25-30\%, 30-35\%},
					grid=major,major grid style={dashed},
					xlabel = {\normalsize Percentage of $\sigma_{\text{max}}$},
                    xlabel style = {yshift=-1mm},
					tick label style={font=\small},
					%x tick label style={rotate=10},
					legend style={at={(0.5,-0.38)},
						anchor=north,legend columns=-1, font=\footnotesize},
					nodes near coords,
					every node near coord/.append style={font=\footnotesize},
					nodes near coords align={vertical}
					]
        \addplot[black,fill=white,%pattern=north west lines, pattern color=red,
        every node near coord/.append style=
					{red,font=\fontsize{8}{12}\selectfont, xshift=-0mm, yshift=-1mm}] coordinates {(5-10\%, 95.8) (15-20\%, 135) (25-30\%, 143) (30-35\%, 137)};
		\addplot[black,fill=black!40,,every node near coord/.append style=
					{black,font=\fontsize{8}{12}\selectfont, yshift=-1mm, xshift=+1.5mm}] coordinates {(5-10\%,  95.4) (15-20\%, 135) (25-30\%, 143) (30-35\%, 137)};
    \end{axis}
    \begin{axis}[
					axis y line*=right,
					axis x line=none,
					width=6.6cm,
		              height=6.2cm,
					enlarge x limits=0.2,
					axis line style={-},
					nodes near coords,
					every node near coord/.append style={
						rotate=20,
						font=\bfseries\fontsize{11}{8}\selectfont,
						anchor=east,
					},
					ylabel = {\normalsize \textbf{\#Ins.Opt}},
					yticklabel style={
						font=\small\bfseries, rotate=20,
					},
					ylabel style={at={(axis description cs:1.22,0.5)},
						anchor=center},
					symbolic x coords={5-10\%, 15-20\%, 25-30\%, 30-35\%},
					tick label style={font=\small},
					scaled y ticks = false,
					legend cell align=left,
					legend style={at={(0.35,0.98)},
						anchor=north,legend columns=1, font=\fontsize{8}{8}\selectfont}
					]
        \addplot[color=black,mark=o,sharp plot] coordinates {(5-10\%,25) (15-20\%,7) (25-30\%,1) (30-35\%,1)};
		\legend{\#Ins.Opt}
    \end{axis}
\end{tikzpicture}}}

    \caption{The Minimum Number of Extra Forward Lanes Needed by the MB-RL and the MB for Resequencing Feasibility and the Corresponding Minimum Makespan. \label{fig4}}

\end{figure}

Then, for each instance in dataset B, we computed the minimum \textit{resequencing makespan} under both buffers. If the instance was feasible at the baseline buffer size, the makespan was computed there; otherwise, under the minimally expanded buffer that required the fewest extra forward lanes for feasibility. Figure \ref{fig4} shows the number of instances solved by BP-CP within one hour (``\#Ins.Opt'') and the average minimum makespan under each buffer across these instances (``Avg. $\widehat{m}^*$'').
%%%%Then, for each instance, we computed the minimum \textit{resequencing makespan} under both buffers: using BP-CP with $\mathcal{M}^{\text{LP}}_2$ for the MB-RL and following the last conclusion in Proposition \ref{pro3} for the MB. If the instance was feasible at the baseline buffer size, the makespan was computed there; otherwise, under the minimally expanded buffer that required the fewest extra forward lanes for feasibility. Figure \ref{fig4} shows the number of instances solved by BP-CP within one hour (``\#Ins.Opt'') and the average minimum makespan under each buffer across these instances (``Avg. $\widehat{m}^*$'').

\vspace{-1mm}

%\vspace{-4mm}

%%%%Then, for each instance, we computed the minimum \textit{resequencing makespan} under both buffers. If the instance was feasible at the baseline buffer size, we computed it at that size; otherwise, at the minimally expanded buffer that required the fewest extra forward lanes for feasibility. Using BP-CP with $\mathcal{M}^{\text{LP}}_2$ or following the last conclusion in Proposition \ref{pro3}, the makespans for the MB-RL and the MB were obtained, respectively. Figure 1 shows the number of instances solved by BP-CP within one hour (``\#Ins.Opt'') and the average minimum makespan under each buffer across these instances (``Avg.$\widehat{m}^*$'').

%%%%%%Then, for each instance in the extended dataset, the minimum makespan under the MB-RL and MB was determined using BP-CP and Proposition \ref{pro3}, respectively. If an instance was feasible with the baseline buffer size, the makespan was calculated at that size; otherwise, it was obtained under the smallest enlarged buffer that required the fewest extra forward lanes for feasibility. Figure \ref{fig2} shows the number of instances solved by BP-CP within one hour and the average minimum makespan under each buffer across these instances.

The results indicate that BP-CP solves fewer instances as resequencing complexity increases. At low complexity ($\sigma\leqslant 0.2\sigma_{\text{max}}$), the MB attains slightly shorter makespans than the MB-RL, but at the cost of requiring more forward lanes to maintain resequencing feasibility (as shown in Table \ref{tab3}). When $\sigma>0.2\sigma_{\text{max}}$ (the upstream and downstream sequences are nearly reversed), the MB-RL achieves makespans comparable to those of the MB while using fewer lanes, highlighting its robustness in achieving competitive makespans across various resequencing complexities, especially with limited buffer size. For managers, the RL supports a more compact and cost-effective buffer design without sacrificing resequencing efficiency.

\section{Conclusion}\label{sec6}
This study explores a \textit{car resequencing problem} where an advanced but less-studied buffer placed between two shops is used to rearrange the car sequence from one shop to the next. This buffer, called the \textit{mix bank with a return lane} (MB-RL), comprises several first-in-first-out forward lanes and one \textit{return lane} (RL) that operates in the opposite direction, allowing cars to loop within it. By scheduling car movements in space and time, the goal is to minimize the total time needed to change the sequence.

We prove the strong NP-completeness %strong NP-hardness
of this \textit{resequencing makespan} minimization problem and provide methods to check resequencing feasibility. Using a \textit{time-space network} (TSN), a two-stage \textit{branch-and-price approach integrated with constraint programming} (BP-CP) is developed for exact solutions. The first stage uses a \textit{column-generation} (CG) scheme to iteratively solve the \textit{master problem} (MP) with \textit{pricing subproblems}. Once CG converges, branching enforces integrality on only a subset of variables. Their integer values define a \textit{constraint-programming} model in the second stage to find a feasible solution to the original problem. To reduce exhaustive branching due to problem symmetry, we propose two reduced MPs based on compressed TSN graphs. A valid lower bound and a preprocessing technique further improve BP-CP performance.

Computational results show that the BP-CP approach significantly increases the scale of instances solvable for a one-hour production batch to 120 cars, nearly double the real‑world scale. It performs best with a moderately relaxed MP, which strikes an excellent balance between solution time and lower-bound quality. Structurally, adding more forward lanes to the MB-RL eases the problem and reduces the minimum makespan, whereas increasing lane capacity has the opposite effect. Compared to the traditional buffer without an RL, the MB-RL exhibits superior resequencing capability and achieves competitive makespans with fewer forward lanes across various resequencing complexities. Managerially, incorporating an RL can nearly match the resequencing capability of several forward lanes. It enables a smaller buffer footprint and provides a cost-effective, robust buffer design for \textit{just-in-time} production, where resequencing demands are frequent and diverse.

\section*{Acknowledgment}
We acknowledge financial support from the Canadian Natural Sciences and Engineering Research Council (NSERC) under Grant 2026-05294. We also thank the Digital Research Alliance of Canada for providing high-performance computing facilities.

\bibliographystyle{plainnat}
\bibliography{myref}

\newpage

%\begin{center}
%{\LARGE\bfseries Online Supplementary}
%\end{center}
\noindent{\Large\bfseries Online Supplementary}
\vspace{-0.5cm}

\appendix

\section{Proof of the Strong NP-completeness of the CRSP-MS}
\begin{theorem}\label{app-theo1}
    The CRSP-MS is NP-complete in the strong sense, even if no car enters the RL.
\end{theorem}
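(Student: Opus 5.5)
We would prove the statement in two parts: membership in NP of the decision version, and strong NP-hardness by a reduction from 3-Partition. The reduction produces instances in which using the RL is never beneficial.

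\emph{Decision version and membership in NP.} The decision version asks: given $\Pi^U$, $\Pi^D$, $l^f$, $\bar q$ and an integer $K$, does a schedule with makespan at most $K$ exist? We first bound $K$ polynomially. The simple looping heuristic of Section~\ref{sec4-6} circulates cars in the MB-RL until their turn to exit. Whenever the instance is feasible (Lemma~\ref{lema1}, Proposition~\ref{pro1}), it gives a makespan polynomial in $d$ and $\bar q$. So we may assume $K$ is at most this bound; otherwise we replace it by the bound, since any schedule found within the bound also has makespan at most $K$. A certificate is then one path per car in the TSN graph $\mathcal{G}$ restricted to $T=\{0,\dots,K\}$. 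Its size is polynomial. FIFO, capacity, one-in/one-out per timestamp, and the prescribed orders (constraints \eqref{m1-con1}--\eqref{m1-con6} of $\mathcal{M}_1$) can all be checked in polynomial time.

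\emph{Reduction to the RL-free case.} We set $K=\underline{\tau}^e_{\pi_d}$, the lower bound of Proposition~\ref{pro3}. We then show that, in the constructed instances, any car entering the RL incurs at least $\tau^r=2\bar q$ extra TUs. The gadgets are designed so that every car is either RL-unused in the sense of Proposition~\ref{pro2} or has no slack against the chain of bounds $\underline{\tau}^e_{\pi_p}$, so this detour pushes $\pi_d$ past $K$. Consequently, a schedule of makespan at most $K$ exists if and only if the cars can be routed through the $l^f$ forward lanes alone, each car travelling without waiting beyond what the bound allows. By Proposition~\ref{pro3}(iii), this is exactly resequencing feasibility with an MB of $l^f$ lanes and capacity $\bar q$. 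This is a car-to-lane assignment problem: each lane must receive non-conflicting cars (an increasing subsequence of the permutation), and the number of cars simultaneously present in a lane must never exceed $\bar q$.

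\emph{Hardness of the assignment, and the main obstacle.} Without the capacity bound the assignment is polynomial: it amounts to partitioning the permutation into $l^f$ increasing subsequences. The hardness must therefore come from capacity, and we reduce from 3-Partition with integers $a_1,\dots,a_{3m}$, target $B$, and $B/4<a_i<B/2$. Take $l^f=m$ lanes and capacity $\bar q=B$ (plus a constant slack if needed). For each $a_i$, build a block of $a_i$ consecutive upstream cars that are pairwise non-conflicting but are all released downstream late, after a separating ``frame''. The frame is a set of $m$ pairwise conflicting cars, which forces one frame car per lane. Its conflict pattern with the blocks makes all cars of a block coexist in the buffer at a common moment and forces each block into a single lane; splitting a block would create a conflict with a frame car in the other lane. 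At that moment each lane then holds a union of blocks whose total size is at most $B$. Since the block sizes sum to $mB$, every lane holds exactly $B$ cars, and the bounds $B/4<a_i<B/2$ give exactly three blocks per lane, which is a 3-partition. Conversely, a 3-partition yields a feasible assignment attaining $K$. All numbers are polynomial in the unary size of the input, which gives hardness in the strong sense.

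The hardest step is the gadget design. It must simultaneously (a) force blocks to stay intact within one lane, (b) make the capacity constraint bind at a single common timestamp, and (c) leave no timing slack, so that the RL argument of the previous paragraph goes through. We would first try a layered permutation $\Pi^D$: frame cars first in reversed order, then the blocks in reversed block order but increasing within each block. Then we would compute the $\underline{\tau}^e$ chain explicitly to verify that it has zero slack.
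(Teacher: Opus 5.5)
\textbf{There are two genuine gaps: step (c) contradicts step (b), and the gadget proposed for step (a) fails.}

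\emph{Step (c).} Step (c) cannot hold for the gadget you describe, and for any gadget of this type it works against (b). In (b), all $mB$ block cars sit in the forward lanes at one timestamp $t$. At most one car reaches a leftmost forward-lane cell per TU (constraint \eqref{m1-con5}). So the block car that entered first has been in the buffer for at least $mB$ TUs. That is, it has waited at least $(m-1)B$ TUs beyond its no-wait traversal of $\bar q=B$ TUs, which already reaches the detour $\tau^r=2\bar q$ as soon as $m\geqslant 3$. This car belongs to the first upstream block, which your layered $\Pi^D$ releases last among the blocks. It is therefore not a left-to-right maximum of $\Pi^D$, and Proposition~\ref{pro2} does not cover it. Nothing in your argument then stops it from looping through the RL without delaying $\pi_d$. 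The RL also supplies $\bar q$ extra cells at exactly the moment your capacity count is supposed to bind; compare Lemma~\ref{lema1}, where with an RL feasibility is just a count against $\bar l\bar q$ cells. So your claimed equivalence between ``makespan at most $K$'' and ``RL-free feasibility'' is unsupported.

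The paper does not attempt this stronger statement. It reads ``no car enters the RL'' as fixing every RL-entry decision to zero, so the buffer behaves as a pure MB, and it proves hardness of that restriction. Its own instance also has cars waiting on the order of $nB\gg 2\bar q$ TUs. Under the paper's reading you can simply drop (c). Under your reading you would need a genuinely different mechanism to exclude the RL.

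\emph{Step (a).} Independently, step (a) has no working mechanism, and the layered permutation you propose fails outright. Releasing all $3m$ blocks in reversed block order makes every two blocks completely conflicting. Then no two blocks can ever share a forward lane, and $3m$ blocks cannot be placed in $m$ lanes, whatever the 3-Partition instance. Frame cars that enter after the blocks but leave before them conflict with every block car, so they also need lanes of their own. If instead you make blocks that may share a lane non-conflicting, FIFO never forbids splitting a block between lanes. A capacity count at a single timestamp is then met by splitting. Conflicts are a fixed pairwise relation given by $\Pi^U$ and $\Pi^D$, so a frame car cannot tell a whole block from a piece of one.

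\emph{The idea the paper uses.} The paper reduces from numerical 3-dimensional matching instead.
It builds three families of blocks $X$, $Y$, $Z$, with all $X$-cars before all $Y$-cars before all $Z$-cars in both sequences. Each family is released in reversed block order, increasing inside a block. So two blocks of the same family are completely conflicting, while different families are compatible.
It adds a single blocker car, last upstream and first downstream. This car forces the other $nB$ cars to be in the buffer together and monopolizes one of the $n+1$ lanes.
Each of the remaining $n$ lanes of capacity $B$ then holds cars of at most one block per family. A pigeonhole argument over $n$ lanes and $n$ blocks per family forces every block to be intact and every lane to hold exactly one block of each family. Because the lanes are full, $x_i+y_j+z_s=B$.

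\emph{Membership in NP.} Your NP-membership paragraph is a useful addition that the paper omits. However, the certificate should record lane-entry and lane-exit times rather than full TSN paths, since a path has length about $K$, which is only pseudo-polynomial in $\bar q$.
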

\begin{proof}

    We establish the strong NP-completeness of the CRSP-MS by considering a special case in which no car enters the RL. This case is a subproblem of the CRSP-MS with all RL-entry decisions already fixed to zero. Consequently, the RL becomes redundant, and the problem reduces to resequencing via an MB consisting only of forward lanes. The proof is based on a reduction from the \textit{numerical 3-dimensional matching problem} (N3DM), which is a feasibility problem known to be strongly NP-complete \citep{garey2002computers}.
    %We establish the strong NP-completeness of the CRSP-MS by considering a special case in which no car enters the RL. This case corresponds to a subproblem of the CRSP-MS with the RL-entry decisions for all cars already fixed to zero. The proof is based on a reduction from the \textit{numerical 3-dimensional matching} problem (N3DM), which is a feasibility problem known to be NP-complete in the strong sense \citep{garey2002computers}.
    %The result follows from a reduction from the 3-Partition Problem (3PP), a feasibility problem known to be NP-hard in the strong sense.

    %We prove Theorem \ref{app-theo1} by reducing the \textit{3-Partition Problem}, well known to be NP-hard in the strong sense, to the CRSP-MS. The \textit{3-Partition Problem} is a feasibility problem whose answer is either \textit{yes} or \textit{no}, and it is defined as follows:

    Given an integer $B$ and three multisets $X=\{x_1,x_2,\dots,x_n\}$, $Y=\{y_1,y_2\dots,y_n\}$, and $z=\{z_1,z_2,\dots,$ $z_n\}$, each containing $n$ elements, the N3DM asks whether there exists a set $M\subseteq X\times Y\times Z$ such that each element of $X$, $Y$, and $Z$ belongs to exactly one triple and every triple $(x,y,z)\in M$ satisfies $x+y+z=B$.

    We now describe the reduction from the N3DM to the special case of the CRSP-MS. For a given N3DM instance, we construct a CRSP-MS instance with $d=1+nB$ cars to be resequenced via an MB consisting of $l^f=n+1$ forward lanes, each with capacity $\bar{q}=B$. Note that such constructed $d$ and $\bar{q}$ make our reduction depend on the integer values of the N3DM and, thus, pseudo-polynomial. The last upstream car $d$ is designated as the first car in the downstream shop and therefore has downstream position $p_d=1$. The remaining $nB$ cars are partitioned into $3n$ consecutive blocks. For each $i = 1,2,\dots,n$, integer $x_i\in X$, $y_i\in Y$, and $z_i\in Z$ induce an \textit{X-car} block $V^X_i$, a \textit{Y-car} block $V^Y_i$, and a \textit{Z-car} block $V^Z_i$, which contain $x_i$, $y_i$, and $z_i$ consecutive cars, respectively. The downstream position $p_k\in \{2,\dots,nB+1\}$ of each car $k\in \{1,\dots,nB\}$ in these blocks is defined as follows:

    \begin{itemize}[topsep=2pt]
        %\item $p_k=1$, for the \textit{last car} $k=d$,

        \item $p_k=1+c+\sum_{i^{\prime}=i+1}^{n}{x_{i^{\prime}}}$, for the \textit{X-car} $k=c+\sum_{i^{\prime}=1}^{i-1}{x_{i^{\prime}}}$, where $i=1,2,\dots, n$, $c=1,2,\dots,x_i$,

        \item $p_k=1+c+\sum_{x\in X}{x}+\sum_{i^{\prime}=i+1}^{n}{y_{i^{\prime}}}$, for the \textit{Y-car} $k=c+\sum_{x\in X}{x}+\sum_{i^{\prime}=1}^{i-1}{y_{i^{\prime}}}$, where $i=1,2,\dots,n$, $c=1,2,\dots,y_i$,

        \item $p_k=1+c+\sum_{x\in X}{x}+\sum_{y\in Y}{y}+\sum_{i^{\prime}=i+1}^{n}{z_{i^{\prime}}}$, for the \textit{Z-car} $k=c+\sum_{x\in X}{x}+\sum_{y\in Y}{y}+\sum_{i^{\prime}=1}^{i-1}{z_{i^{\prime}}}$, where $i=1,2,\dots,n$, $c=1,2,\dots,z_i$.

        %%\item $p_k=\sum_{i^{\prime}=1}^{i-1}{a_{i^{\prime}}}+i\cdot (n-1)-(j-1)+1$, for the \textit{delimiter cars} $k=\sum_{i^{\prime}=1}^{i}{a_{i^{\prime}}}+(i-1)\cdot (n-1)+j$, where $i=1,2,\dotsm3n$, $j=1,2,\dots,n-1$,

        %%\item $p_k=k+n$, for the \textit{non-conflicting cars} $k=1+\sum_{i^{\prime}=1}^{i-1}{a_{i^{\prime}}}+(n-1)\cdot (i-1),\dots,\sum_{i^{\prime}=1}^{i}{a_{i^{\prime}}}+(n-1)\cdot (i-1)$, where $i=1,2,\dots,3n$.

    \end{itemize}

    %\begin{itemize}
    %    \item \textcolor{red}{$p_k=d$, for $k=d$}
    %    \item \textcolor{red}{$p_k=k+n$, for $k=1+\sum_{i^{\prime}=1}^{i-1}{a_{i^{\prime}}+(n-1)(i-1)},\dots,\sum_{i^{\prime}=1}^{i}{a_{i^{\prime}}}+(n-1)(i-1)$ (non-critical cars)}
    %    \item \textcolor{red}{$p_k=\sum_{i^{\prime}=1}^{i-1}{a_{i^{\prime}}}+i(n-1)-(j-1)+1$, for $k=\sum_{i^{\prime}=1}^{i}{a_{i^{\prime}}}+(i-1)(n-1)+j$ (``delimiter cars''), where $i=1,2,\dots,3n$, $j=1,2,\dots,n-1$}
    %\end{itemize}

    The corresponding car blocks are given by
    \begin{itemize}[topsep=2pt]
        \item $V^X_i=\{k=c+\sum_{i^{\prime}=1}^{i-1}{x_{i^{\prime}}}\mid c=1,2,\dots,x_i\}$,
        \item $V^Y_i=\{k=\sum_{x\in X}{x}+c+\sum_{i^{\prime}=1}^{i-1}{y_{i^{\prime}}}\mid c=1,2,\dots,y_i\}$,
        \item $V^Z_i=\{k=\sum_{x\in X}{x}+\sum_{y\in Y}{y}+c+\sum_{i^{\prime}=1}^{i-1}{z_{i^{\prime}}}\mid c=1,2,\dots,z_i\}$, $i=1,2,\dots,n$.
    \end{itemize}

    Figure \ref{fig1-1} illustrates this reduction, and Figure \ref{fig1-2} presents the corresponding feasible solutions for both instances. The answer to the N3DM instance is \textit{yes} if and only if the CRSP-MS instance admits a feasible schedule, or car-to-lane assignment, with a makespan no greater than $1+(2n+1)B$ TUs.

    \vspace{-2mm}

    \begin{figure}[htp]
        \centering
        \makebox[\textwidth][c]{\includegraphics[width=1.14\linewidth]{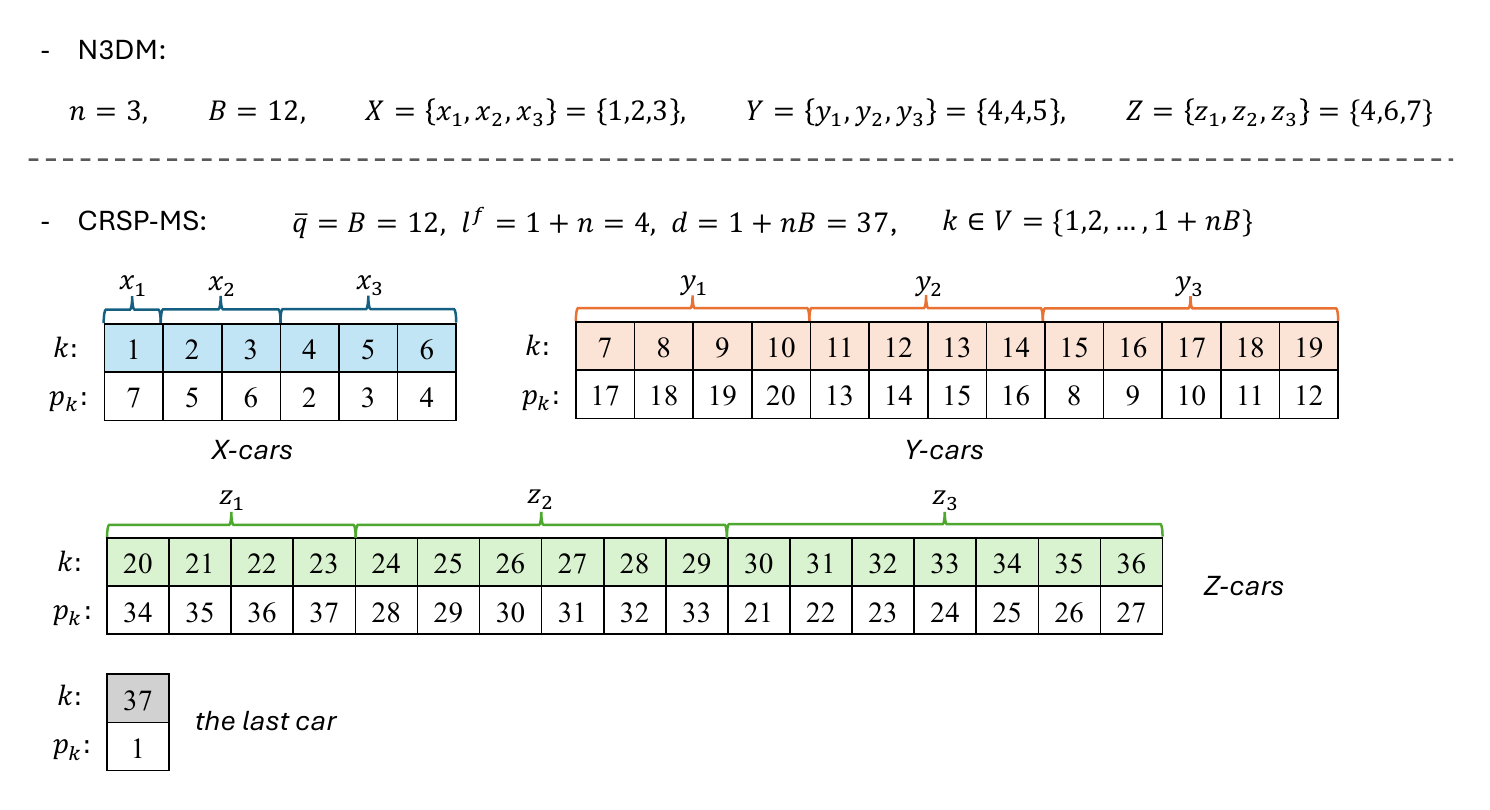}}
        \caption{Illustration of the Reduction from the N3DM to the CRSP-MS.}
        \label{fig1-1}
    \end{figure}

    \begin{figure}[htp]
        \centering
        \makebox[\textwidth][c]{\includegraphics[width=0.84\linewidth]{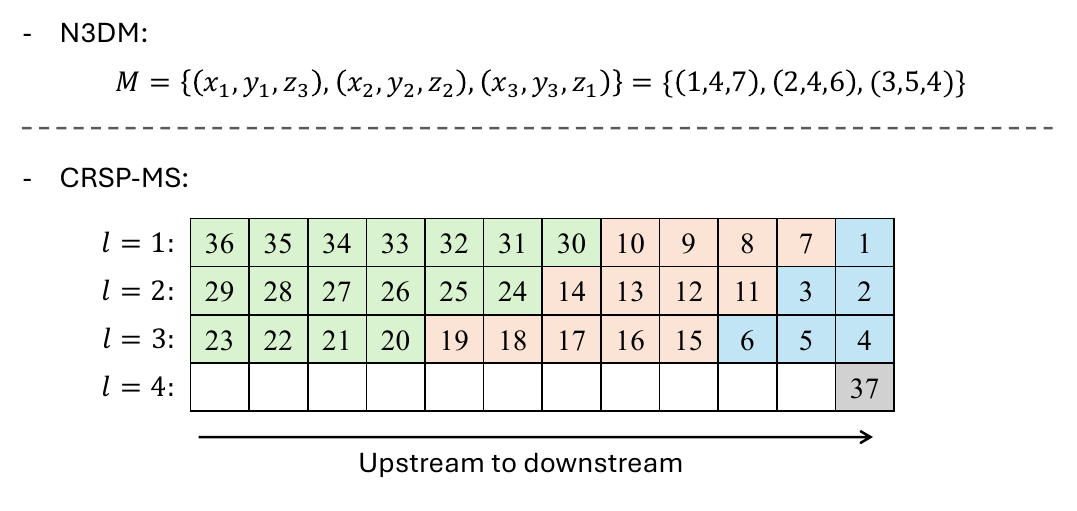}}
        \caption{Illustration of the Feasible Solutions for the N3DM and the CRSP-MS Instances.}
        \label{fig1-2}
    \end{figure}

    By construction, the order of downstream positions within each car block is consistent with that of upstream indexes. Therefore, all cars from the same group can be assigned to a common lane. However, for any two distinct \textit{X-car} blocks $V^X_i$ and $V^X_j$ with $i<j$, every car in $V^X_i$ has a smaller upstream index but a larger downstream position than every car in $V^X_j$. The two blocks are thus completely conflicting and cannot share a lane under the FIFO rules. The same property also applies to \textit{Y-car} groups and \textit{Z-car} groups. As a result, each lane contains at most one \textit{X-car} group, one \textit{Y-car} group, and one \textit{Z-car} group.

    $\Longrightarrow$ A feasible matching $M$ of the N3DM instance induces a feasible solution to the CRSP-MS instance with a makespan of at most $1+(2n+1)B$ TUs. By construction, the last car $d$ is the first to enter the downstream shop and thus occupies one lane alone. Before it leaves the MB, all preceding $nB$ cars must be stored in the other $n$ lanes. For each triple $(x_i,y_j,z_s)\in M$ ($i,j,s\in \{1,\dots,n\}$), assign all cars in $V^X_i$, $V^Y_j$, and $V^Z_s$ to the same lane in sequence. Since $x_i+y_j+z_s=B$, this lane is filled exactly to its capacity. Because every integer in $X$, $Y$, and $Z$ appears in exactly one triple of $M$, every car block is assigned to exactly one lane, and all $n$ lanes are fully occupied. The earliest timestamp at which car $d$ reaches the downstream is $1+(n+1)B$. Thereafter, the remaining $nB$ cars can continuously follow it to the downstream without idle time. The resulting schedule achieves a makespan of $1+(2n+1)B$ TUs.

    $\Longleftarrow$ A feasible schedule with a makespan no greater than $1+(2n+1)B$ TUs for the CRSP-MS instance implies a \textit{yes}-certificate for the N3DM instance. %induces a feasible matching for the N3DM instance.
    In any feasible assignment, as stated previously, the $nB$ cars preceding car $d$ must be stored in the $n$ lanes not used by $d$ before $d$ leaves the MB. These $n$ lanes have a total capacity of $nB$, and each lane must be filled to its full capacity. Moreover, each lane can contain at most one \textit{X-car} block, one \textit{Y-car} block, and one \textit{Z-car} block. As there are exactly $n$ lanes and $n$ blocks of each type, every block must appear in exactly one lane, and each lane stores exactly one block of each type. Consider an arbitrary lane that stores the blocks $V^X_i$, $V^Y_j$, and $V^Z_s$. The numbers of these blocks satisfy $x_i + y_j + z_s = B$ because the lane is full, which then defines a valid triple $(x_i, y_j, z_s)$. Because every block is assigned to exactly one lane, the $n$ lanes induce $n$ disjoint triples that collectively cover all integers of $X$, $Y$, and $Z$. These triples constitute a feasible solution to the N3DM instance.

    Because the N3DM is strongly NP-complete, $B$ is polynomially bounded in the input size. Therefore, the constructed CRSP-MS instance has polynomial size, and the reduction is actually polynomial. This proves that the CRSP-MS is NP-complete in the strong sense.

    %%$\Longleftarrow$ A feasible scheduling scheme for the CRSP-MS instance with a makespan no greater than $1+(2n+1)\cdot[B+3(n-1)]$ TUs implies a \textit{yes}-certificate for the 3PP instance. %or yields a feasible partition of the 3PP instance.
    %A feasible car-to-lane assignment that ensures the completion time for rearranging all cars does not exceed $1+(2\cdot n+1)\cdot \left[B +3\cdot(n-1) \right] $ time units implies a feasible partition scheme, i.e., a \textit{yes}-certificate for the \textit{3-Partition Problem}.

\end{proof}

\section{Proof of Proposition 1}
\begin{proposition}
Consider car $\pi_{p}\in V$ in the downstream sequence $\Pi^{D}=[\pi_1,\dots,\pi_p,\dots,\pi_{d}]$. If $\pi_p$ has the largest upstream index among the first $p\in P$ cars in $[\pi_1,\pi_2,\dots,\pi_p]$, i.e., $\pi_p=\max\{\pi_1,\pi_2,\dots,\pi_p\}$, then there exists an optimal solution to the CRSP-MS in which car $\pi_d$ never enters the RL and is said to be RL-unused.
%then requiring car $\pi_p$ to enter the RL is a dominated decision. Hence, there exists an optimal solution to the CRSP-MS in which car $\pi_p$ never enters the RL and is said to be RL-unused.
\end{proposition}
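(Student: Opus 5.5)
Let $k^\ast=\pi_p$ with $k^\ast=\max\{\pi_1,\dots,\pi_p\}$. (The statement as reprinted says ``car $\pi_d$''; it should read $\pi_p$, as in Proposition~\ref{pro2}.) The argument is an exchange argument. Start from any optimal schedule $S$ in which car $k^\ast$ enters the RL at least once, and build a schedule $S'$ with the same makespan in which $k^\ast$ never enters the RL. Every other car keeps exactly its trajectory in $S$; only the path of $k^\ast$ changes.

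The key structural fact comes from the hypothesis. Every car downstream-ahead of $k^\ast$, namely $\pi_1,\dots,\pi_{p-1}$, has a smaller upstream index. So all of these cars have already departed $s$ when $k^\ast$ departs at $s_{k^\ast}$. Moreover, $k^\ast$ arrives at $e$ at some time $t_{k^\ast}$ after all of them. On the other side, every car that is upstream-behind $k^\ast$ (index $>k^\ast$) is also downstream-behind it. So $k^\ast$ conflicts with no car of larger index. It conflicts only with cars $v<k^\ast$ having $p_v>p$, and those cars leave after $k^\ast$.

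The plan for $S'$ is as follows. Keep the departure time $s_{k^\ast}$. Put $k^\ast$ into the forward lane $l$ it occupies during its \emph{last} forward-lane visit in $S$, and move it through that lane so that it exits to $e$ at exactly $t_{k^\ast}$. Equivalently, we may delay its departure so that it enters lane $l$ at its original final entry time $\phi^{in}_{k^\ast,\widehat n+1}$, waiting in $s$ beforehand. The delayed-departure variant is cleaner, but it must respect constraint \eqref{m1-con2}, and cars $k^\ast+1,\dots$ leave $s$ after $k^\ast$ in $S$. The fix is to let $k^\ast$ depart at its original time, but to have it be the car that, in $S$, would have been in the leftmost cell. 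I would therefore use the first option together with a ``relabeling'' trick. During the interval in which $k^\ast$ was looping, its occupancy of cells in $S$ is removed. Its new occupancy is a contiguous stay in lane $l$ from $s_{k^\ast}+1$ to $t_{k^\ast}-1$.

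The main obstacle is proving that this new stay respects FIFO order and the capacity of lane $l$ over the enlarged interval. For FIFO, I would check the cars sharing lane $l$ with $k^\ast$ in $S'$. Any car entering $l$ after $s_{k^\ast}+1$ and exiting before $t_{k^\ast}$ would break FIFO. Such a car has index $>k^\ast$ (it departs $s$ later) or is a looping car; by the fact above, cars of index $>k^\ast$ exit after $k^\ast$. For cars $v<k^\ast$ that re-enter $l$ from the RL in that window and exit before $t_{k^\ast}$, I expect to need a further modification: give such a car the slot $k^\ast$ used (a swap of the roles of the two trajectories along their common segment). Then I would induct on the number of RL entries of $k^\ast$, removing one loop at a time.

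Capacity is the other delicate point. Adding $k^\ast$ to lane $l$ earlier could exceed $\bar q$. The counting argument I would attempt charges each extra unit of occupancy to a cell that $k^\ast$ vacated in $S$ at the same time, using the symmetry of forward lanes to permute lane labels for the cars concerned. The two-transfer-car limits (\eqref{m1-con5}, \eqref{m1-con6}) are preserved because $k^\ast$'s single entry at $s_{k^\ast}+1$ and its single exit at $t_{k^\ast}$ were both already used by $k^\ast$ in $S$. Since $t_{k^\ast}$ and all other arrival times are unchanged, the makespan is unchanged, and $S'$ is optimal.
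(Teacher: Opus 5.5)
\paragraph{Where your proposal stops.} Your structural observation is that all of $\pi_1,\dots,\pi_{p-1}$ have smaller upstream index and that every car of larger index also arrives later. This is the whole content of the paper's proof as well. The paper then simply asserts that a looping $\pi_p$ can be rerouted to ``delay its entry or remain in a forward lane'' without loss. Your proposal stops at the same point: the FIFO and capacity checks are only things you ``expect'' or ``would attempt'', and the repairs you sketch do not work.
\begin{itemize}
\item \emph{FIFO.} Apply FIFO in $S$ at the final entry of $k^\ast$ into lane $l$. Every other car that enters $l$ during $(s_{k^\ast}+1,\phi^{in}_{k^\ast,\widehat n_{k^\ast}+1})$ then leaves $l$ before $t_{k^\ast}$. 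In $S'$ such a car sits behind $k^\ast$, so it violates FIFO. Cars of index $>k^\ast$ are among these: your ``fact'' says they reach $e$ after $k^\ast$, not that they leave lane $l$ after $k^\ast$. Here they necessarily leave $l$ into the RL, so they are not covered by your case analysis.
\item \emph{Capacity.} While $k^\ast$ sits in the RL in $S$, the cell it vacates is an RL cell. No relabelling of forward lanes moves that capacity into lane $l$, which may be full at that time.
\item \emph{Swapping.} ``Swapping the roles'' of two trajectories is not available, because each car is tied to its upstream and downstream rank by \eqref{m1-con2}--\eqref{m1-con3}.
\end{itemize}
So an exchange that freezes every other trajectory fails in general.

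\paragraph{The gap cannot be closed as stated.} Nothing in the model excludes $l^f=1$, and there the claim is false. Take one forward lane, any $\bar q\ge 2$, and $\Pi^D=[2,1,3,5,4]$. Car $3=\pi_3=\max\{2,1,3\}$ satisfies the hypothesis.

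\emph{Lower bound if car 3 never enters the RL.} Car 2 overtakes car 1 in the single lane, so car 1 must use the RL, and its last lane entry is at time $\ge 2\bar q+1$. Car 3 must enter the lane after that entry; otherwise it would be ahead of car 1 and reach $e$ first. Hence car 3 departs at $\ge 2\bar q+1$ and car 4 departs at $\ge 2\bar q+2$. Car 4 is overtaken by car 5, so it must loop, and the makespan is at least $2\bar q+2+3\bar q+1=5\bar q+3$.

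\emph{A better schedule in which car 3 loops.}
\begin{itemize}
\item Cars $1,2,3,4$ depart at $0,1,2,3$, and car 2 goes straight to $e$, arriving at $\bar q+2$.
\item Cars $1,3,4$ each loop once, with car 4 waiting one TU in $c_{\bar l1}$; they re-enter the lane at $2\bar q+1$, $2\bar q+3$, $2\bar q+5$.
\item Car 5 departs at $2\bar q+3$.
\item Cars $1,3,5,4$ then arrive at $3\bar q+1$, $3\bar q+3$, $3\bar q+4$, $3\bar q+5$.
\item Lane entries occur at $1,2,3,4,2\bar q+1,2\bar q+3,2\bar q+4,2\bar q+5$, and lane exits at $\bar q+1,\dots,\bar q+4,3\bar q+1,3\bar q+3,3\bar q+4,3\bar q+5$.
\end{itemize}
All moves are non-waiting apart from that single TU, so one checks directly that no cell is used twice and that FIFO and capacities hold. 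Since $3\bar q+5<5\bar q+3$ (for instance $11$ versus at least $13$ when $\bar q=2$), every optimal solution sends car 3 through the RL.

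\paragraph{What both arguments miss.} By looping, $\pi_p$ can leave $s$ early, so that later cars that themselves need loops can also depart early. Meanwhile a car of $\{\pi_1,\dots,\pi_{p-1}\}$ returning from the RL still gets ahead of $\pi_p$. This contradicts the paper's claim that looping $\pi_p$ creates no opportunities for later cars. Hence the order relations of $\pi_p$ with the other cars cannot by themselves yield the proposition. A correct version needs additional hypotheses (at the very least excluding $l^f=1$), together with an argument that actually uses them.
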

\begin{proof}
    Define $A_p=\{\pi_1,\pi_2,\dots,\pi_p\}$. Because $\pi_p=\max A_p$, every car $k\in A_p\backslash\{\pi_p\}$ has an upstream index smaller than $\pi_p$ and appears earlier in the downstream shop, i.e., $k<\pi_p$ and $p_k<p$. Car $\pi_p$ therefore satisfies the FIFO rules with all cars in $A_p\backslash \{\pi_p\}$. None of these cars that enter the MB-RL earlier will overtake car $\pi_p$, so there is no need for car $\pi_p$ to enter the RL to allow them to reach downstream first.

    Now consider any car $k>\pi_p$. Its downstream position $p_k$ must satisfy $p_k>p$. Otherwise, if $p_k<p$, then car $k$ would belong to $A_p$, contradicting $\pi_p=\max A_p$. Hence, $p_k>p$ holds for all cars $k>\pi_p$. As a result, these cars also respect FIFO rules with $\pi_p$. None of these later cars need to overtake $\pi_p$, and requiring car $\pi_p$ to enter the RL cannot create additional resequencing opportunities for them.

    Consequently, no car forces car $\pi_p$ to change its relative order during resequencing. Any feasible solution with car $\pi_p$ entering the RL can be transformed into another feasible solution in which car $\pi_p$ either delays its entry into the MB or remains in a forward lane throughout the process. This transformation still allows for the rearrangement of other conflicting cars without increasing the makespan. Therefore, there always exists an optimal solution in which car $\pi_p$ never enters the RL.
    %Consequently, no car requires car $\pi_p$ to change its relative order during the resequencing process. Any feasible solution in which car $\pi_p$ enters the RL can be transformed into another feasible solution in which car $\pi_p$ remains in a forward lane throughout the process. This latter option apparently leads to no longer makespan. Hence, requiring car $\pi_p$ to enter the RL is a dominated decision, and there always exists an optimal solution in which car $\pi_p$ never enters the RL.
\end{proof}

\section{The Resequencing Feasibility Problem (RSFP)}\label{app-sec2}
Formally, given an upstream sequence $\Pi^U$, a downstream sequence $\Pi^D$, and a physical buffer (e.g., an MB or an MB-RL), the \textit{resequencing feasibility problem} (RFSP) determines whether $\Pi^U$ can be adjusted to $\Pi^D$ using the buffer.

%\subsection{Strong NP-hardness}
%\subsection{A Polynomial-time Algorithm for the MB-RL Case}
\subsection{MB-RL Case: A Polynomial-time Algorithm}
The RSFP with an MB-RL can be solved in $\mathcal{O}(d^2)$ time using Algorithm \ref{alg1}, where $d$ is the number of cars.
\begin{algorithm}[htp]\label{alg1}
    \LinesNumbered
    \caption{Resequencing Feasibility Check for the \textit{RSFP} with an MB-RL}
    \Input{\textit{ins} - an \textit{RSFP} with an MB-RL ($\bar{l}$ lanes, each with $\bar{q}$ cells) problem instance.}
    \Output{\textit{flag} - feasibility status of the instance; \textit{$c^*$} - the minimum MB-RL size required.}

    $boolean$ $flag \gets true$; $int$ $c^{*} \gets 0$\;

    \For{$k=2$ \KwTo $d$}{
        $\overline{V}(k) \gets \varnothing$\;
        \For{$v=1$ \KwTo $k-1$}{
           \lIf{$p_v>p_k$}{$\overline{V}(k) \gets \overline{V}(k)\cup \{v\}$}}
        $c^{*}\gets\max\{c^{*},
        \big|\overline{V}(k)\big| \}$\;
        \lIf{$flag=false$}{continue}
        \lIf{$\big|\overline{V}(k)\big|\geqslant \bar{l}\times \bar{q}$}{$flag \gets false$}
    }
    \Return{$flag$, $c^{*}$}
\end{algorithm}

Notably, Algorithm \ref{alg1} also determines the minimum MB-RL size $c^*$ required to transform from $\Pi^D$ to $\Pi^D$. If $c^*$ is undesired, the ``continue'' in line \textbf{7} can be replaced with ``break''.

\subsection{MB Case: the Assignment Model of \citet{guo2025logic}}
\begin{proposition}\label{app-pro2}
    The RSFP with an MB is NP-complete in the strong sense.
\end{proposition}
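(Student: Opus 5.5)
The plan is to prove membership in NP and strong NP-hardness separately. The hardness half reuses the reduction from the N3DM given in the proof of Theorem~\ref{app-theo1}.

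\emph{Membership in NP.} A certificate for a yes-instance is a car-to-lane assignment for the MB. A polynomial verifier checks it as follows.
\begin{itemize}
\item For each lane, it checks that the cars assigned to that lane appear in the same relative order in $\Pi^U$ and $\Pi^D$, i.e., no two conflicting cars share a lane. This is the FIFO condition.
\item It simulates the schedule in which cars enter in upstream order at consecutive timestamps and leave in downstream order as early as possible. During the simulation it checks that no lane ever holds more than $\bar{q}$ cars.
\end{itemize}
The simulation has $\mathcal{O}(d)$ events per car, so verification is polynomial. The underlying fact I would state is that, with FIFO lanes and no RL, feasibility is determined by the car-to-lane assignment plus the greedy earliest schedule. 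Delaying cars never helps capacity: a car waiting upstream occupies no cell, so postponing entry only reduces occupancy. I would therefore argue that a feasible assignment exists iff the greedy simulation of some assignment succeeds.

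\emph{Strong NP-hardness.} Reuse the instance built from an N3DM instance in the proof of Theorem~\ref{app-theo1}:
\begin{itemize}
\item $d=1+nB$ cars;
\item $l^f=n+1$ forward lanes, each of capacity $\bar{q}=B$;
\item the last upstream car $d$ placed first downstream;
\item the remaining cars arranged in the $X$-, $Y$-, $Z$-blocks as defined there.
\end{itemize}
The key observation is that the makespan bound $1+(2n+1)B$ in that proof played no essential role in the direction $\Longleftarrow$. That direction used only feasibility:
\begin{itemize}
\item car $d$ must leave first, so all $nB$ earlier cars are simultaneously in the buffer when $d$ enters;
\item hence they occupy $n$ lanes, with $d$ needing its own lane;
\item hence each of those $n$ lanes is full;
\item complete conflict between same-type blocks allows at most one block of each type per lane.
\end{itemize}
Together these force a perfect matching with lane sums exactly $B$. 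One point needs justification: $d$ could in principle share a lane with earlier cars. It cannot, because $d$ must exit before all of them, and in a FIFO lane a car that enters later cannot exit earlier. So $d$ sits alone, or at least in a lane containing none of the $nB$ other cars, which must then fill the other $n$ lanes. Conversely, the $\Longrightarrow$ direction constructs an explicit feasible assignment. Hence the RSFP instance with this MB is feasible iff the N3DM instance is a yes-instance. Since N3DM is strongly NP-complete, $B$ is polynomially bounded, the instance has size polynomial in the input, and strong NP-completeness follows.

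\emph{Main obstacle.} I expect the delicate step to be the counting argument that all $nB$ cars are simultaneously stored when car $d$ is about to exit. Each of these cars enters before $d$ (upstream order) and exits after $d$ (downstream order). This requires the standing assumptions that cars cannot remain in the upstream shop once a successor has departed, and that the buffer is the only storage, which follow from the problem definition. I would make this explicit by invoking the argument behind Lemma~\ref{lema1}, namely that the cars in $\overline{V}(d)$ must remain in the buffer when $d$ enters, and noting that here $|\overline{V}(d)|=nB$ fills exactly the $n$ lanes not used by $d$.
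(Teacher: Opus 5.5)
\textbf{Hardness.} Your hardness half matches the paper's proof. The paper's proof consists only of reusing the N3DM instance of Theorem~\ref{app-theo1} and noting that this instance admits a feasible car-to-lane assignment iff the N3DM instance is a yes-instance. Your observations that the makespan bound is inessential, and that car $d$ cannot share a lane with any of the other $nB$ cars, are correct.

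\textbf{Membership in NP.} The paper never argues membership in NP, and your membership argument has a genuine flaw. Simulating entries at consecutive timestamps with earliest exits does not minimize lane loads. Your own remark that postponing entry only reduces occupancy shows it is the worst choice, not the best.

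For example, take $d=6$, $\Pi^D=[3,1,4,5,6,2]$, and an MB with $l^f=2$, $\bar{q}=2$. The conflicting pairs are $\{1,3\}$ and $\{2,v\}$ for $v=3,\dots,6$. Up to relabeling, the only admissible assignment therefore puts $\{3,4,5,6\}$ in lane $A$ and $\{1,2\}$ in lane $B$. In your schedule, car $k$ reaches $c_{l1}$ at timestamp $k$, and cars $3,1,4,5,6,2$ reach the downstream shop at $5,6,7,8,9,10$. So at timestamp $6$, lane $A$ holds cars $4,5,6$, exceeding $\bar{q}=2$. Your verifier thus rejects every certificate of an instance that is feasible: let car $6$ wait one TU upstream and enter lane $A$ at timestamp $7$, when car $4$ has left.

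\textbf{Repair.} Simulate the latest-entry order instead: the next downstream car exits as soon as it is in the buffer, and the next upstream car is admitted only when no exit is possible. Write $m_k=\min_{w\geqslant k}p_w$. Under this order the buffer contains exactly $F(k)=\{v<k\mid p_v>m_k\}$ when car $k$ enters. Each such $v$ is present at that moment in every schedule, since it exits after the car at position $m_k$, which enters no earlier than $k$.

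This single order therefore minimizes every lane's load at every entry, whatever the assignment. It is realized as a timed schedule by spacing consecutive events $\bar{q}+1$ TUs apart. Since loads only grow at entries, a certificate (an assignment) is valid iff two conditions hold:
\begin{itemize}
\item no lane contains a conflicting pair;
\item for each $k$, at most $\bar{q}$ cars of $F(k)\cup\{k\}$ share $k$'s lane.
\end{itemize}
This is the capacity constraint \eqref{con3} of the model of \citet{guo2025logic}, with $\rho^k_v=1$ exactly for $v\in F(k)\cup\{k\}$. It is checkable in $\mathcal{O}(d^2)$ time, which gives membership in NP.
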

\begin{proof}
Based on a reduction from the N3DM, we construct an RFSP instance in exactly the same way as the special case of the CRSP-MS in the proof of Theorem \ref{app-theo1}. The N3DM instance is feasible if and only if the RFSP instance admits a feasible car-to-lane assignment, or equivalently, if the sequence change from $\Pi^U$ to $\Pi^D$ can be achieved via the MB. %The correctness of the reduction follows directly from the proof of Theorem \ref{app-theo1}.
The proof therefore follows directly from that of Theorem \ref{app-theo1}.
\end{proof}

The RSFP with an MB can be solved using the car-to-lane assignment model of \citet{guo2025logic}. In their model, the binary variable $h^k_l$ if car $k\in V$ is assigned to forward lane $l\in L^f$ and 0 otherwise. The set $\mathcal{I}=\{(k_1,k_2)\in V\times V \mid \text{car } k_1 \text{ conflicts with car } k_2, k_1\neq k_2\}$ contains all pairs of \textit{conflicting} cars, and the binary parameter $\rho^k_v$ indicates whether car $v\in V, v\leqslant k$ is stored in the MB when car $k\in V$ enters, with its value derived from $\Pi^D$. With these definitions, their assignment model, which is a feasibility problem to assign each car $k\in V$ to a forward lane subject to FIFO rules and lane capacity, is given by: %With these definitions, their assignment model, which determines the forward lane entered by each upstream car $k\in V$, is given by:
\begin{subequations}\label{m1}
    \begin{flalign}
        \label{con1}\qquad \qquad \qquad &\sum_{l\in L^f}{h^k_l}=1, &&k\in V, \\
        \label{con2}&h^{k_1}_l+h^{k_2}_l\leqslant 1, &&l\in L^f, (k_1,k_2)\in \mathcal{I}, \qquad \\
        \label{con3}&\sum_{v=1}^{k}{\rho^{k}_v \cdot h^v_l}\leqslant \bar{q}, &&l\in L^f, k\in V, \\
        \label{con4}&h^k_l\in \{0,1\}, &&l\in L^f, k\in V.
    \end{flalign}
\end{subequations}

Constraints (\ref{con1}) ensure that each car enters exactly one forward lane. Constraints (\ref{con2}) require cars to comply with the FIFO rules. When car $k\in V$ enters lane $l$, one constraint in (\ref{con3}) ensures that the number of cars still in lane $l$ cannot exceed capacity $\bar{q}$.

After solving model \ref{m1}, if it yields a feasible assignment, the given MB can successfully achieve the sequence change from $\Pi^U$ to $\Pi^D$. If it is infeasible, the sequence change cannot be accomplished.

\section{A Multi-Commodity Network Flow Model}\label{app-sec3}
Based on the \textit{time-space network} (TSN) $\mathcal{G}=(\mathcal{N},\mathcal{A})$, we formulate the CRSP-MS as a multi-commodity network flow model. For each car $k\in V$, let binary variable $x^k_{itjt^{\prime}}$ indicate whether it traverses arc $(i,t;j,t^{\prime})\in \mathcal{A}$. The compact formulation is as follows:

\begin{subequations}
    \label{M1}
    \begin{flalign}
    \label{m1-obj1}\min& \sum_{(i,t;e,t^{\prime})\in \mathcal{A}^{e}}{t^{\prime}\cdot x^{\pi_d}_{itet^{\prime}}} \\
    s.t.&\quad \nonumber \\
    \label{app-m1-con1}& \sum_{(s,t;j,t^{'})\in \mathcal{A}^{s}}{x^{k}_{stjt^{'}}}=1, &&k\in V, \\
    \label{app-m1-con2}& \sum_{(i,t;e,t^{\prime})\in \mathcal{A}^{e}}{x^k_{itet^{\prime}}}=1, &&k\in V, \\
    \label{app-m1-con3}&\sum_{(s,t;j,t^{'})\in \mathcal{A}^{s}}{x^{k}_{stjt^{'}}}\leqslant \sum_{t^{'}=0}^{t}{\sum_{(s,t^{'};j,t^{''})\in \mathcal{A}^{s}}{x^{k-1}_{st^{'}jt^{''}}}}, &&k\in V\backslash \{1\}, t\in \{0,\dots,\tau-\tau^m\}, \\
    \label{app-m1-con4}& \sum_{(i,t^{\prime};e,t)\in \mathcal{A}^{e}}{x^v_{it^{\prime}et}}\leqslant \sum_{t^{\prime}=\tau^m}^{t}{\sum_{(i,t^{''};e,t^{\prime})\in \mathcal{A}^{e}}{x^k_{it^{''}et^{'}}}}, &&k,v\in V, p_k<p_v\in P, t\in \{\tau^m,\dots,\tau\}, \\
    \label{app-m1-con5}& \sum_{(i,t;j,t^{\prime})\in \mathcal{A}}{x^{k}_{itjt^{\prime}}}-\sum_{(j,t^{\prime};i,t)\in \mathcal{A}}{x^{k}_{jt^{\prime}it}}=0, &&k\in V, (i,t)\in \mathcal{N}^{f}\cup \mathcal{N}^{r}, \\
    \label{app-m1-con6}&\sum_{k\in V}{\sum_{(i,t^{\prime};j,t)\in \mathcal{A}}{x^k_{it^{\prime}jt}}}\leqslant 1, &&(j,t)\in \mathcal{N}^{f}\cup \mathcal{N}^{r}, \\
    \label{app-m1-con7}&\sum_{k\in V}{\big(
    \sum_{(s,t;j,t^{\prime})\in \mathcal{A}^{s}}{x^k_{stjt^{\prime}}}+\sum_{(i,t;j,t^{\prime})\in \mathcal{A}^{fr}}{x^k_{itjt^{\prime}}}\big)} \leqslant 1, &&t\in \{0,\dots,\tau-\tau^{m}\}, \\
    \label{app-m1-con8}&\sum_{k\in V}{\big(\sum_{(i,t^{\prime};e,t)\in \mathcal{A}^{e}}{x^k_{it^{\prime}et}}+\sum_{(i,t^{\prime};j,t)\in \mathcal{A}^{rf}}{x^k_{it^{\prime}jt}} \big)}\leqslant 1, &&t\in \{\tau^m,\dots,\tau\}, \\
    \label{app-m1-con9}&x^{k}_{itjt^{\prime}}\in \{0,1\}, &&k\in V, (i,t;j,t^{\prime})\in \mathcal{A}.
    \end{flalign}
\end{subequations}

The objective function \eqref{m1-obj1} minimizes the \textit{resequencing makespan}. Constraints \eqref{app-m1-con1} and \eqref{app-m1-con2} ensure that all cars depart from the upstream shop and arrive at the downstream shop. The given departure and arrival orders (i.e., $\Pi^U$ and $\Pi^D$) are guaranteed by constraints \eqref{app-m1-con3} and \eqref{app-m1-con4}. Flow conservation at each \textit{FL node} in $\mathcal{N}^f$ and each \textit{RL node} in $\mathcal{N}^r$ is enforced by constraints \eqref{app-m1-con5}, and constraints \eqref{app-m1-con6} further restrict the inflows to such a node to at most one, so as to satisfy both FIFO rules and the capacity limit for each lane. Constraints \eqref{app-m1-con7} and \eqref{app-m1-con8} respectively ensure that at any timestamp, at most one car can enter and leave any forward lane. Constraints \eqref{app-m1-con9} define the variable domains.

\section{Detailed Arc and Variable Definitions}\label{app-sec4}
For each car $k\in V$, given its earliest departure time $\tau^s_k$ from the upstream shop and its latest arrival timestamp $\tau^e_k$ at the downstream shop, this section formally defines the arc set $\mathcal{A}^1_k$ of the TSN graph $\mathcal{G}^1_k$ and specifies the domains of all decision variables in the \textit{constraint programming} (CP) model $\mathcal{M}^{\text{CP}}$.

\subsection{Arc Set $\mathcal{A}^1_k$}
The arc set $\mathcal{A}^1_k$ of $\mathcal{G}^1_k$ for car $k\in V$ is defined as $\mathcal{A}^1_k=\mathcal{A}^s_k\cup \mathcal{A}^e_k\cup \mathcal{A}^w_k\cup \mathcal{A}^m_k$, where

\begin{enumerate}[topsep=1pt, itemsep=1pt]
    \renewcommand{\labelenumi}{(\roman{enumi})}
    \item \textit{pull-in arcs} $\mathcal{A}^s_k=\{(s,t;c_{l1},t+1)\mid (s,t)\in \mathcal{N}^s_k, (c_{l1},t+1)\in \mathcal{N}^f_k\}$,
    \item \textit{pull-out arcs} $\mathcal{A}^e_k=\{(c_{l\bar{q}},t-1;e,t)\mid (c_{l\bar{q}},t-1)\in \mathcal{N}^f_k,(e,t)\in \mathcal{N}^e_k\}$,
    \item \textit{waiting arcs} $\mathcal{A}^w_k=\{(c_{lq},t;c_{lq},t+1)\mid (c_{lq},t),(c_{lq},t+1)\in \mathcal{N}^f_k\cup \mathcal{N}^r_k\}$,
    \item \textit{moving arcs} $\mathcal{A}^m_k=\mathcal{A}^{mc}_k\cup \mathcal{A}^{fr}_k\cup \mathcal{A}^{rf}_k$,
    $\mathcal{A}^{mc}_k=\{(c_{lq},t;c_{l,q+1},t+1)\mid (c_{lq},t),(c_{l,q+1},t+1)\in \mathcal{N}^f_k\}\cup \{(c_{\bar{l}q},t;c_{\bar{l},q-1},t+1)\mid (c_{\bar{l}q},t),(c_{\bar{l},q-1},t+1)\in \mathcal{N}^r_k\}$, $\mathcal{A}^{fr}_k=\{(c_{l\bar{q}},t;c_{\bar{l}\bar{q}},t+1)\mid (c_{l\bar{q}},t)\in \mathcal{N}^f_k,(c_{\bar{l}\bar{q}},t+1)\in \mathcal{N}^r_k\}$, and $\mathcal{A}^{rf}_k=\{(c_{\bar{l}1},t;c_{l1},t+1)\mid (c_{\bar{l}1},t)\in \mathcal{N}^r_k,(c_{l1},t+1)\in \mathcal{N}^f_k\}$.
\end{enumerate}

\subsection{Variable Domains in $\mathcal{M}^{\text{CP}}$}
The decision variables in $\mathcal{M}^{\text{CP}}$ are grouped into \textit{interval variables} $y^k_n$, $\widetilde{y}^{l}_{kn}$, $g^k_n$ and integer variables $\phi^{in}_{kn}$, $\phi^{out}_{kn}$. For each \textit{interval variable} in $\{y^k_n\}\cup \{\widetilde{y}^l_{kn}\mid l\in L^f\}$, its \textit{start time}, representing the arrival timestamp of \textit{dummy car} $o^k_n\in \mathcal{D}$ at the leftmost cell of a forward lane, shares the same domain as the integer variable $\phi^{in}_{kn}$. Likewise, its \textit{end time}, indicating the departure timestamp of $o^k_n$ from the rightmost cell of a forward lane, shares the domain with the integer variable $\phi^{out}_{kn}$. Therefore, for each \textit{dummy car} $o^k_n\in \mathcal{D}^k$ ($\forall k\in V$), we specify below how to determine the domains of only the \textit{start} and \textit{end times} for the \textit{interval variables} $y^k_n$ and $g^k_n$, where $g^k_n$ denotes the $n$-th ($n\in \{1,\dots,\widehat{n}_k\}$) entry of \textit{dummy car} $o^k_n$ into the RL.

We first determine the range of the arrival timestamp $\tau^e_k$ at the downstream shop for each original car $k\in V$. Given the makespan $\widehat{m}$, i.e., the arrival timestamp $\tau^e_{\pi_d}$ of the last car $\pi_d$ ($\widehat{m}=\tau^e_{\pi_d}$), the arrival timestamp $\tau^e_k$ for each car $k\in V\backslash\{\pi_d\}$ lies in $\left[\underline{t}^e_k,\overline{t}^e_k\right]$, where
\begin{equation*}
    \underline{t}^e_k = \max \left\{\underline{\tau}^e_{k}, k + (2\cdot\widehat{n}_k + 1)\cdot \bar{q}\right\}, \qquad \overline{t}^e_k=\widehat{m}-(d-p_k).
\end{equation*}
\noindent Here, $\underline{\tau}^e_{k}$ is the lower bound on $\tau^e_k$ obtained from Proposition 3, $\widehat{n}_k$ is the number of times car $k$ enters the RL, and $p_k$ is its downstream position.

Then, the departure timestamp $\tau^s_k$ from the upstream shop for car $k\in V$ falls in $\left[\underline{\tau}^s_k,\overline{\tau}^s_k\right]$ with
\begin{equation*}
    \underline{\tau}^s_k = k - 1, \qquad \overline{\tau}^s_k=\overline{\tau}^e_k-(2\cdot \widehat{n}_k + 1) \cdot \bar{q} - 1.
\end{equation*}
By Corollary 1, if $k\in \{1,2,\dots, \pi_1\}$, then $\overline{\tau}^s_k=\underline{\tau}^s_k=k-1$.

After that, for each \textit{interval variable} $y^k_n$ ($k\in V, n\in \{1,\dots,\widehat{n}_k+1\}$), let $\left[\underline{t}_{kn}^{1},\overline{t}^{1}_{kn}\right]$ and $\left[\underline{t}_{kn}^{2},\overline{t}^{2}_{kn}\right]$ denote the domains of its \textit{start} and \textit{end times}, respectively, as determined by the following procedure:

%%After that, for each \textit{interval variable} $y^k_n$ ($k\in V, n\in \{1,\dots,\widehat{n}_k+1\}$), let $\left[\underline{t}_{kn}^{1},\overline{t}^{1}_{kn}\right]$ and $\left[\underline{t}_{kn}^{2},\overline{t}^{2}_{kn}\right]$ denote the domains of its \textit{start} and \textit{end times}, respectively. For each \textit{interval variable} $g^k_n$, the corresponding domains are $\left[\underline{t}_{kn}^{3},\overline{t}^{3}_{kn}\right]$ (\textit{start time}) and $\left[\underline{t}_{kn}^{4},\overline{t}^{4}_{kn}\right]$ (\textit{end time}). These domains can be determined using the following procedures:

\begin{algorithm}[htp]\label{alg2}
    \LinesNumbered
    \caption{Procedure to Obtain the Domains $[\underline{t}_{kn}^{1},\overline{t}^{1}_{kn}]$ and $[\underline{t}_{kn}^{2},\overline{t}^{2}_{kn}]$}
    \Input{$\left[\underline{t}^e_k,\underline{t}^e_k \right]$ - domain of $\tau^e_k$; $\left[\underline{t}^s_k,\underline{t}^s_k \right]$ - domain of $\tau^s_k$.}
    \Output{$[\underline{t}_{kn}^{1},\overline{t}^{1}_{kn}]$; $[\underline{t}_{kn}^{2},\overline{t}^{2}_{kn}]$.}

    %$boolean$ $flag \gets true$; $int$ $c^{*} \gets 0$\;

    \For{$k=1$ \KwTo $d$}{
        \For{$n=1$ \KwTo $\widehat{n}_k+1$}{
        $\underline{t}_{kn}^{1}=\underline{\tau}^s_k+1+2\cdot (n-1) \cdot \bar{q}$\;
        $\underline{t}_{kn}^{2}=\underline{t}_{kn}^{1}+\bar{q}-1$\;
        \lIf{$n=\widehat{n}_k+1$}{$\underline{t}_{kn}^{2}=\max \{\underline{t}_{kn}^{2}, \,\ \underline{\tau}^e_k - 1\}$}
        $\overline{t}_{kn}^{2}=\overline{\tau}^e_k-1-2\cdot (\widehat{n}_k + 1 - n) \cdot \bar{q}$\;
        $\overline{t}_{kn}^{1}=\overline{t}_{kn}^{2}-\bar{q}+1$\;
        \lIf{$n=1$}{$\overline{t}_{kn}^{1}=\min \{\overline{t}_{kn}^{1}, \,\ \overline{\tau}^s_k+1\}$}
        }
    }
    \Return{$[\underline{t}_{kn}^{1},\overline{t}^{1}_{kn}]$, $[\underline{t}_{kn}^{2},\overline{t}^{2}_{kn}]$}
\end{algorithm}

\noindent Based on $\left[\underline{t}_{kn}^{1},\overline{t}^{1}_{kn}\right]$ and $\left[\underline{t}_{kn}^{2},\overline{t}^{2}_{kn}\right]$, the corresponding domains for each \textit{interval variable} $g^k_n$ ($k\in V$) are $\left[\underline{t}_{kn}^{3},\overline{t}^{3}_{kn}\right]$ (\textit{start time}) and $\left[\underline{t}_{kn}^{4},\overline{t}^{4}_{kn}\right]$ (\textit{end time}), where
\begin{align*}
    &\underline{t}_{kn}^{3}= \underline{t}^2_{kn}+1,\qquad &&\overline{t}_{kn}^{3}=\overline{t}^{2}_{kn}+1, \\
    &\underline{t}^{4}_{kn}=\underline{t}^{1}_{k,n+1}-1, \qquad &&\overline{t}^{4}_{kn}=\overline{t}^1_{k,n+1}-1.
\end{align*}

\section{Reduced Cost Calculation and Labeling Algorithm}\label{app-sec5}
Because no new constraints are introduced when reducing $\mathcal{M}_2$ to $\mathcal{M}_3$, only the following dual variables related to $\mathcal{M}_1^{\text{LP}}$ and $\mathcal{M}_2^{\text{LP}}$ are considered when calculating the reduced cost of a \textit{path} $r\in \Omega_k$ ($\forall k\in V$).

\vspace{+2mm}

\begin{itemize}[topsep = 0.4pt, itemsep = 0.8pt, parsep = 0pt, labelwidth=0pt, leftmargin=*]
    \item $\mathcal{M}^{\text{LP}}_1$:
    \begin{itemize}[topsep = 0.4pt, itemsep = 0.8pt, parsep = 0pt, labelwidth=0pt, leftmargin=*]
    \item[] $\alpha_{k}$: dual variable of constraint (1b) for car $k\in V$.
    \item[] $\beta_{k}$: dual variable of constraint (1c) for car $k\in V\backslash \{1\}$, $\beta_k\geqslant 0$.
    \item[] $\gamma_p$: dual variable of constraint (1d) for downstream position $p\in P\backslash \{d\}$, $\gamma_p\geqslant 0$.
    \item[] $\lambda^1_{it}$: dual variable of constraint (1e) for node $(i,t)\in \mathcal{N}^{f}\cup \mathcal{N}^{r}$, $\lambda_{it}\leqslant 0$.
    \item[] $\theta^1_t$: dual variable of constraint (1f) for timestamp $t\in \{1,\dots, \tau-\bar{q}\}$, $\theta_{t}\leqslant 0$.
    \item[] $\omega^1_{t}$: dual variable of constraint (1g) for timestamp $t\in \{\tau^m, \dots, \tau\}$, $\omega_t\leqslant 0$.
    \item[] $\mu_k$: dual variable of constraint (1i) for car $k\in V$.
    \item[] $\xi^{t}_u$: dual variable of valid cut (1j) for makespan equal $t$ and the \textit{RL-entry plan} $u$, $\xi^t_u\leqslant 0$.
    \end{itemize}

    \item $\mathcal{M}^{\text{LP}}_2$:
    \begin{itemize}[topsep = 0.4pt, itemsep = 0.8pt, parsep = 0pt, labelwidth=0pt, leftmargin=*]
        \item[] $\lambda^2_{t}$ : dual variable of constraint (2a) for timestamp $t\in \{1,\dots, \tau-\bar{q}\}$, $\lambda^2_t\leqslant 0$.
        \item[] $\theta^2_t$: dual variable of constraint (2b) for timestamp $t\in \{\tau^m, \dots, \tau\}$, $\theta^2_t\leqslant 0$.
        \item[] $\omega^2_t$: dual variable of constraint (2c) for timestamp $t\in \{\tau^m, \dots, \tau-\tau^m\}$, $\omega^2_t\leqslant 0$.
        \item[] $\eta^l_t$: dual variable of constraint (2d) for lane $l\in L^f$ and timestamp $t\in \{1, \dots, \tau-1\}$, $\eta^l_t\leqslant 0$.
    \end{itemize}
\end{itemize}

\vspace{+2mm}

Let $\bar{\alpha}_k$, $\bar{\beta}_k$, $\bar{\gamma}_p$, $\bar{\lambda}^1_{it}$, $\bar{\theta}^1_t$, $\bar{\omega}^1_t$, $\bar{\mu}_k$, $\bar{\xi}^{t}_{u}$, $\bar{\lambda}^2_t$, $\bar{\theta}^2_t$, $\bar{\omega}^2_t$, and $\bar{\eta}^{l}_{t}$ denote their values of these variables. For each of the three RMPs, the reduced cost of \textit{path} $r\in \Omega_k$ in the corresponding TSN graph for each car $k\in V$ is calculated as follows:
\begin{itemize}[topsep = 0.4pt, itemsep = 0.8pt, parsep = 0pt, labelwidth=0pt, leftmargin=*]
    \item For $\mathcal{M}^{\text{LP}}_1$, the reduced cost $\bar{c}^{1k}_r$ of \textit{path} $r$ in $\mathcal{G}^1_k$ is:
\end{itemize}
\begin{equation}\label{rc1}
    %\begin{split}
        \bar{c}^{1k}_r=%&
        \bar{c}^k_r -\sum_{(i,t)\in \mathcal{N}^{f}\cup \mathcal{N}^{r}}{\rho^{kr}_{it}\cdot \bar{\lambda}^1_{it}} - \sum_{t=1}^{\tau-\bar{q}}{\sum_{l\in L^f}{\tilde{\rho}^{kr}_{c_{l1},t}\cdot \bar{\theta}^1_t}} - \sum_{t=\tau^m}^{\tau}{(\rho^{kr}_{et}+\tilde{\rho}^{kr}_{c_{\bar{l}\bar{q}},t})\cdot \bar{\omega}^1_t} -\zeta^k_r\cdot \bar{\eta}_k% \\
        %&+t^k_r-\sum_{t}{\sum_{(s,t)}{\bar{\xi}^{\bar{m}}_c}}\;\ \big(\text{if }p_k=d\big)
        %\end{split}
\end{equation}
\noindent if $p_k=d$, then
\begin{align}\label{rc1-1}
    \bar{c}^{1k}_r= &\bar{c}^k_r -\sum_{(i,t)\in \mathcal{N}^{f}\cup \mathcal{N}^{r}}{\rho^{kr}_{it}\cdot \bar{\lambda}^1_{it}} - \sum_{t=1}^{\tau-\bar{q}}{\sum_{l\in L^f}{\tilde{\rho}^{kr}_{c_{l1},t}\cdot \bar{\theta}^1_t}} - \sum_{t=\tau^m}^{\tau}{(\rho^{kr}_{et}+\tilde{\rho}^{kr}_{c_{\bar{l}\bar{q}},t})\cdot \bar{\omega}^1_t} -\zeta^k_r\cdot \bar{\eta}_k\nonumber \\
    +&t^k_r-\sum_{t\geqslant \bar{m}_r}{\sum_{u\in \mathcal{U}(t)}{\bar{\xi}^{t}_u}}
\end{align}
where $\bar{m}_r$ is the downstream arrival timestamp of car $k$ along \textit{path} $r$, and $\mathcal{U}(t)$ is the set of \textit{RL-entry plans} that are proven infeasible when paired with makespan $t$.
\begin{itemize}[topsep = 0.4pt, itemsep = 0.8pt, parsep = 0pt, labelwidth=0pt, leftmargin=*]
    \item For $\mathcal{M}^{\text{LP}}_2$, the reduced cost $\bar{c}^{2k}_r$ of \textit{path} $r$ in $\mathcal{G}^2_k$ is:
\end{itemize}
\begin{equation}\label{rc2}
    \bar{c}^{2k}_r=\bar{c}^k_r
        -\sum_{t=1}^{\tau-\bar{q}}{\rho^{kr}_{It}\cdot \bar{\lambda}^2_t} -\sum_{t=\tau^m}^{\tau}{\rho^{kr}_{Ot}\cdot \bar{\theta}^2_t} - \sum_{t=\tau^m}^{\tau-\tau^m}{\rho^{kr}_{Rt}\cdot \bar{\omega}^2_{t}} - \sum_{l\in L^f}{\sum_{t=1}^{\tau-1}{\rho^{kr}_{lt}}\cdot \bar{\eta}^l_t} - \zeta^k_r\cdot \bar{\eta}_k
\end{equation}
\noindent if $p_k=d$, then
\begin{align}\label{rc2-1}
    \bar{c}^{2k}_r=&\bar{c}^k_r
        -\sum_{t=1}^{\tau-\bar{q}}{\rho^{kr}_{It}\cdot \bar{\lambda}^2_t} -\sum_{t=\tau^m}^{\tau}{\rho^{kr}_{Ot}\cdot \bar{\theta}^2_t} - \sum_{t=\tau^m}^{\tau-\tau^m}{\rho^{kr}_{Rt}\cdot \bar{\omega}^2_{t}} - \sum_{l\in L^f}{\sum_{t=1}^{\tau-1}{\rho^{kr}_{lt}}\cdot \bar{\eta}^l_t} - \zeta^k_r\cdot \bar{\eta}_k \nonumber \\
        +&t^k_r-\sum_{t\geqslant \bar{m}_r}{\sum_{u\in \mathcal{U}(t)}{\bar{\xi}^{t}_u}}
\end{align}
where $\bar{m}_r$ is the downstream arrival timestamp of car $k$ along \textit{path} $r$, and $\mathcal{U}(t)$ is the set of \textit{RL-entry plans} that are proven infeasible when paired with makespan $t$.

\begin{itemize}[topsep = 0.4pt, itemsep = 0.8pt, parsep = 0pt, labelwidth=0pt, leftmargin=*]
    \item For $\mathcal{M}^{\text{LP}}_3$, the reduced cost $\bar{c}^{3k}_r$ of \textit{path} $r$ in $\mathcal{G}^3_k$ is:
\end{itemize}
\begin{equation}\label{rc3}
    \bar{c}^{3k}_{r}=\bar{c}^{2k}_r+\sum_{l\in L^f}{\sum_{t=1}^{\tau-1}{\rho^{kr}_{lt}\cdot\bar{\eta}^l_t}}
\end{equation}
\noindent where
\begin{equation*}
    \bar{c}^{k}_r=-\bar{\alpha}_k -
        \begin{cases}
        -\bar{\beta}_2\cdot s^k_r, &k=1 \\
        (\bar{\beta}_k-\bar{\beta}_{k+1})\cdot s^k_r, &k\neq 1,d \\
        \bar{\beta}_d\cdot s^k_r, &k=d
        \end{cases} -
        \begin{cases}
        -\bar{\gamma}_1\cdot t^k_r, &p_k=1 \\
        (\bar{\gamma}_{p_k-1}-\bar{\gamma}_{p_k})\cdot t^k_r, &p_k\neq 1,d \\
        \bar{\gamma}_{d-1}\cdot t^k_r, &p_k=d
        \end{cases}
\end{equation*}

In our \textit{labeling algorithm}, the label $(nl_1, nl_2, nl_3, nl_4, nl_5)$ for each node contains: a fixed RL-entry count ($nl_1$), a binary flag ($nl_2$) indicating whether the label is visited, the shortest distance ($nl_3$) from the \textit{source} under that RL-entry count, the index of the incoming arc ($nl_4$) that realizes this distance, and the index of the parent label ($nl_5$) at the predecessor node connected by arc $nl_4$. We assign a label $(0, 1, 0, -1, -1)$ to the \textit{source} and $\bar{n}_k+1$ labels to all others. The $n$-th label ($n\in N_k$), initialized as $(n, 0, +\infty, -1, -1)$, indicates that car $k$ enters the RL $n$ times along a partial \textit{path} from the \textit{source}.

The algorithm extends partial \textit{paths} and updates labels via a queue of zero-in-degree nodes, which initially contains the \textit{source}. Each time the head node is dequeued, the in-degrees of its successor nodes along all available outgoing arcs are decremented by one, and any successor reaching zero in-degree is enqueued. For each label $(nl_1, nl_2, nl_3, nl_4, nl_5)$ of the dequeued node with $nl_2 = 1$, the label at each successor node connected by an available outgoing arc $a$ is updated. In the case where $a$ represents entry into the RL, i.e., \textup{(i)} a \textit{moving arc} in $\mathcal{A}^{fr}_k$ of $\mathcal{G}^1_k$, \textup{(ii)} an \textit{enter-RL arc} in $\mathcal{A}^{FR}_k$ of $\mathcal{G}^2_k$, or \textup{(iii)} a \textit{use-RL arc} in $\mathcal{A}^{sR}_k$ or a \textit{re-RL arc} in $\mathcal{A}^{RR}_k$ of $\mathcal{G}^3_k$, the $nl_1+1$-th label of the successor node, $(nl_{1}+1, nl_2^{\prime}, nl_3^{\prime}, nl_4^{\prime}, nl_5^{\prime})$, is updated by setting the visited flag $nl_2^{\prime}=1$. If the current shortest distance $nl_3^{\prime}$ exceeds $nl_3+\bar{c}_a$, where $\bar{c}_a$ is the cost of arc $a$, then $nl_3^{\prime}$ is set to $nl_3+\bar{c}_a$ and $nl_4^{\prime}$ is set to $a$. In the other case, where $a$ does not involve RL-entry, we update the $nl_1$-th label of the successor node analogously.

The above process continues until the queue is empty. When finished, for each RL-entry count $n\in N_k$, if the $n$-th label at the \textit{terminal} is marked as visited, i.e., $nl_2 = 1$, then the corresponding shortest \textit{path} can be retrieved by backtracking from the stored incoming arc indexes $nl_4$ and parent label indexes $nl_5$.

Accounting for arc availability affected by branching rules, we perform a depth-first search to compute the in-degree of each node before the \textit{labeling algorithm}. If the \textit{terminal} has zero in-degree, i.e., it is unreachable from the \textit{source}, then SP$_k$ is confirmed infeasible, allowing us to prune the current BB node.

\newpage

\section{Flowchart of the BP-CP Algorithm}\label{app-sec6}
\begin{figure}[htb]
\noindent\makebox[\linewidth][c]{
    \centering
    \scalebox{0.75}{
    \begin{tikzpicture}[font=\small,thick]
        \filldraw[fill=gray!10, draw=black,loosely dashed, thin] (-20.2,18.35) -- (-9.8,18.35) -- (-9.8,11.85)
		-- (-20.2,11.85) -- (-20.2,18.35);
        \node[font=\large] at (-14,12.6) {\textbf{Column Generation}};
        \node[font=\large, blue, dashed] at (-10.5,7.8) {\textbf{Heuristic Procedures}};
        %node1
		\node[draw, rounded rectangle, minimum width=1.8cm, minimum height=0.9cm] (n1) {START};
        %node2
		\node[draw, minimum height=1cm, left=of n1, xshift=-2mm] (n2)
		{\begin{minipage}[]{2.8cm}
				\centering
				Compute the lower bound $\underline{\tau}^e_{\pi_d}$
		\end{minipage}};
        %node3
		\node[draw, minimum height=1cm, left=of n2, xshift=+2mm] (n3)
		{\begin{minipage}[]{2.9cm}
				\centering
				Find all \textit{maximal cliques} $\mathcal{C}^{\text{all}}$ and inequalities (5)
		\end{minipage}};
        %node4
		\node[draw, minimum height=1cm, left=of n3, xshift=+2mm] (n4)
		{\begin{minipage}[]{3.1cm}
				\centering
				Solve $\mathcal{M}^{\text{CP}}$ with pair $(\underline{\tau}^e_{\pi_d},u^{\text{con}}_1)$ under TimeLimit$_1$
		\end{minipage}};
        %node5
		\node[draw, diamond, left=of n4,  aspect=2.5,fill=white, xshift=+2mm] (n5)
		{\begin{minipage}[]{1.45cm}
				\centering
                Is $\mathcal{M}^{\text{CP}}$ feasible?
		\end{minipage}};
        %node6
		\node[draw, minimum height=1cm, left=of n5, xshift=+1mm] (n6)
		{\begin{minipage}[]{3.5cm}
				\centering
				Initialize $ub$, $lb$, and queue $Q=\{\text{root}\}$
		\end{minipage}};
        %node7
		\node[draw, rounded rectangle, minimum width=1.8cm, minimum height=0.9cm, below = of n5,yshift=+2.5mm] (n7) {END};
        %node8
		\node[draw, diamond, above=of n6,  aspect=1.5,fill=white, inner sep=0.5pt, inner sep=0.5pt, yshift=-5mm] (n8)
		{\begin{minipage}[]{2.05cm}
				\centering
                %$Q$ is non-empty \&\&
                $ub=lb$ $\parallel$
                RunTime $\geq$ TimeLimit$_2$?
		\end{minipage}};
        %node9
		\node[draw, minimum height=1cm, above=of n8, yshift=+10.7cm] (n9)
		{\begin{minipage}[]{1.7cm}
				\centering
				Pop the next node
		\end{minipage}};
        %node10
		\node[draw, minimum height=1cm, right=of n9, xshift=-3mm, fill=white] (n10)
		{\begin{minipage}[]{1.7cm}
				\centering
				Solve the RMP
		\end{minipage}};
        %node11
		\node[draw, minimum height=1cm, right=of n10, xshift=+2mm, fill=white] (n11)
		{\begin{minipage}[]{1.7cm}
				\centering
				Solve the SPs
		\end{minipage}};
        %node12
		\node[draw, diamond, right=of n11,  aspect=1.3,fill=white, inner sep=0.5pt, xshift=-2mm] (n12)
		{\begin{minipage}[]{1.55cm}
				\centering
                Find any new columns?
		\end{minipage}};
        %node13
		\node[draw, minimum height=1cm, below=of n12, yshift=+5mm, fill=white] (n13)
		{\begin{minipage}[]{1.7cm}
				\centering
				Add new columns
		\end{minipage}};
        %node14
		\node[draw, minimum height=1cm, above=of n11, yshift=-5mm, fill=white] (n14)
		{\begin{minipage}[]{3.6cm}
				\centering
				Remove \textit{incompatible} columns and arcs
		\end{minipage}};
        %node15
		\node[draw, diamond, right=of n12,  aspect=1.5,fill=white, inner sep=0.5pt,xshift=-1mm] (n15)
		{\begin{minipage}[]{1.46cm}
				\centering
                Is $\widehat{m}$ integral?
		\end{minipage}};
        %node16
		\node[draw, dashed, blue, diamond, right=of n15,  aspect=2.2,fill=white, inner sep=0.5pt,xshift=-2mm] (n16)
		{\begin{minipage}[]{3.5cm}
				\centering
                Are the $\mathbf{z}$ variables for all \textit{non-promising} cars integers?
		\end{minipage}};
        %node17
		\node[draw, dashed, blue, minimum height=1cm, below=of n16, yshift=+2mm] (n17)
		{\begin{minipage}[]{3.2cm}
				\centering
				Construct \textit{RL-entry plan} $u^{\text{con}}_2$
		\end{minipage}};
        %node18
        \node[draw, diamond, dashed, blue, below=of n17, aspect=2.8, fill=white, yshift=+2mm,
        text width=3.52cm,  % 直接设置文本宽度
        align=center,  % 文本居中对齐
        inner sep=0.5pt]  % 内部间距
      (n18)
        {Can find $\widehat{m}^{\prime}$ between $\widehat{m}$ and $ub-1$ such that $(\widehat{m}^{\prime}, u^{\text{con}}_2)$ satisfies (1j)?
        %Do all pairs $(\widehat{m}^{\prime},u^{\text{con}}_2)$, where $\widehat{m}^{\prime}=\widehat{m},\dots,ub-1$, violate (1j)?
        };
        %node19
        \node[draw, diamond, below=of n18, aspect=2.5, fill=white, yshift=+4mm,
        text width=2.6cm,  % 直接设置文本宽度
        align=center,  % 文本居中对齐
        inner sep=0.5pt]  (n19)
        {Are the $\mathbf{z}$ variables for all cars integers?};
        %node20
		\node[draw, minimum height=1cm, below=of n19, yshift=+4mm] (n20)
		{\begin{minipage}[]{3.7cm}
				\centering
				Get \textit{RL-entry plan} $u_3$
		\end{minipage}};
        %node21
		\node[draw, minimum height=1cm, left=of n20, xshift=+3mm] (n21)
		{\begin{minipage}[]{2.75cm}
				\centering
				Solve $\mathcal{M}^{\text{CP}}$ with pair $(\widehat{m},u_3)$
		\end{minipage}};
        %node22
		\node[draw, diamond, left=of n21,  aspect=2.9,fill=white, xshift=+3mm] (n22)
		{\begin{minipage}[]{1.42cm}
				\centering
                Is $\mathcal{M}^{\text{CP}}$ feasible?
		\end{minipage}};
        %node23
		\node[draw, dashed, blue, minimum height=1cm, left=of n18, xshift=+2mm] (n23)
		{\begin{minipage}[]{3.1cm}
				\centering
				Solve $\mathcal{M}^{\text{CP}}$ with each $(\widehat{m}^{''},u^{\text{con}}_2)$ under TimeLimit$_3$ ($\widehat{m}^{''}=\widehat{m}^{\prime},\dots,ub-1$)
		\end{minipage}};
        %node24
		\node[draw, diamond, dashed, blue, left=of n23,  aspect=1.9,fill=white, inner sep=0.5pt, xshift=+3mm] (n24)
		{\begin{minipage}[]{1.55cm}
				\centering
                Find a feasible solution?
		\end{minipage}};
        %node25
		\node[draw, dashed, blue, minimum height=1cm, left=of n24, xshift=+2mm] (n25)
		{\begin{minipage}[]{1.25cm}
				\centering
				Update $ub$
		\end{minipage}};
        %node26
		\node[draw, diamond, dashed, blue, below=of n24,  aspect=3,fill=white, inner sep=0.5pt, yshift=-4mm] (n26)
		{\begin{minipage}[]{2.3cm}
				\centering
                Find any new cuts (4)?
		\end{minipage}};
        %node27
		\node[draw, minimum height=1cm, below=of n22, yshift=+3mm,inner sep=0.5pt] (n27)
		{\begin{minipage}[]{1.9cm}
				\centering
				Update $ub$
		\end{minipage}};
        %node28
		\node[draw, minimum height=1cm, left=of n27, xshift=0mm,inner sep=0.5pt] (n28)
		{\begin{minipage}[]{1.8cm}
				\centering
				Update $lb$
		\end{minipage}};
        %node29
		\node[draw, minimum height=1cm, below=of n10, yshift=-1.1cm, fill=white] (n29)
		{\begin{minipage}[]{1.8cm}
				\centering
				Add new cut(s) (4)
		\end{minipage}};
        %node30
		\node[draw, minimum height=1cm, right=of n16, xshift=-1mm] (n30)
		{\begin{minipage}[]{1.7cm}
				\centering
				Branching
		\end{minipage}};
        \draw[-Stealth,
		black,
		line width=1.1pt] (n1) -- (n2);
        \draw[-Stealth,
		black,
		line width=1.1pt] (n2) -- (n3);
        \draw[-Stealth,
		black,
		line width=1.1pt] (n3) -- (n4);
        \draw[-Stealth,
		black,
		line width=1.1pt] (n4) -- (n5);
        \draw[-Stealth,
		black,
		line width=1.1pt] (n5) -- node[above,pos=0.38]{No}(n6);
        \draw[-Stealth,
		black,
		line width=1.1pt] (n5) -- node[left,pos=0.38]{Yes}(n7);
        \draw[-Stealth,
		black,
		line width=1.1pt] (n6) -- (n8);
        \draw[-Stealth,
		black,
		line width=1.1pt] (n8) -- node[left,pos=0.5]{No}(n9);
        \draw[-Stealth,
		black,
		line width=1.1pt] (n8.west) -- ++(-0.5cm,0cm) node[yshift=-4.15cm, xshift=+3cm]{Yes} |- (n7.west);
        \draw[-Stealth,
		black,
		line width=1.1pt] (n9) -- (n10);
        \draw[-Stealth,
		black,
		line width=1.1pt] (n10) -- (n11);
        \draw[-Stealth,
		black,
		line width=1.1pt] (n11) -- (n12);
        \draw[-Stealth,
		black,
		line width=1.1pt] (n12) -- node[left,pos=0.4]{Yes}(n13);
        \draw[-Stealth,
		black,
		line width=1.1pt] (n13.west) -| (n10);
        \draw[-Stealth,
		black,
		line width=1.1pt] (n12) -- node[above,midway]{No}(n15);
        \draw[-Stealth,
		black,
		line width=1.1pt] (n29) -- (n10);
        \draw[-Stealth,
		black,
		line width=1.1pt] (n15) -- node[above,pos=0.35]{Yes}(n16);
        \draw[-Stealth,
		black,
		line width=1.1pt] (n15.north) |- node[below,pos=0.675]{No}(n14);
        \draw[-Stealth,
		black,
		line width=1.1pt] (n14) -| (n10);
        \draw[-Stealth,
		black,
		line width=1.1pt] (n16) -- node[above,pos=0.45]{No}(n30);
        \draw[-Stealth,
		black,
		line width=1.1pt] (n28) -- (n8);
        \draw[-Stealth,
		black,
		line width=1.1pt] (n16) -- node[left,pos=0.38]{Yes}(n17);
        \draw[-Stealth,
		black,
		line width=1.1pt] (n17) -- (n18);
        \draw[-Stealth,
		black,
		line width=1.1pt] (n18) -- node[left,pos=0.38]{Yes}(n19);
        \draw[-Stealth,
		black,
		line width=1.1pt] (n19) -- node[left,pos=0.38]{Yes}(n20);
        \draw[-Stealth,
		black,
		line width=1.1pt] (n20) -- (n21);
        \draw[-Stealth,
		black,
		line width=1.1pt] (n21) -- (n22);
        \draw[-Stealth,
		dashed, blue,
		line width=1.1pt] (n18) -- node[above,pos=0.38]{No}(n23);
        \draw[-Stealth,
		dashed, blue,
		line width=1.1pt] (n23) -- (n24);
        \draw[-Stealth,
		dashed, blue,
		line width=1.1pt] (n24) -- node[above,pos=0.38]{Yes}(n25);
        \draw[-Stealth,
		dashed, blue,
		line width=1.1pt] (n24) -- node[left,pos=0.38]{No}(n26);
        \draw[-Stealth,
		dashed, blue,
		line width=1.1pt] (n26) -- node[above,midway]{No}(n19);
        \draw[-Stealth,
		dashed, blue,
		line width=1.1pt] (n25) |- (n26);
        \draw[-Stealth,
		dashed, blue,
		line width=1.1pt] (n26.south) -- ++(0cm,-0.5cm) node[yshift=+0.3cm, xshift=-3.5cm]{Yes} -| (n29.south);
        \draw[-Stealth,
		black,
		line width=1.1pt] (n22) -- node[left,pos=0.4]{Yes}(n27);
        \draw[-Stealth,
		black,
		line width=1.1pt] (n27) -- (n28);
        \draw[-Stealth,
		black,
		line width=1.1pt] (n19) -| node[above,pos=0.28]{No}(n30);
        \draw[-Stealth,
		black,
		line width=1.1pt] (n30.east) -- ++(+0.2cm,0cm) --(+3.75cm,+1.2cm) -| (n28.south);
        \draw[-Stealth,
		black,
		line width=1.1pt] (n22) -| node[above,pos=0.3]{No}(n29);
    \end{tikzpicture}
}}
    \caption{Flowchart of the BP-CP Algorithm.}
    \label{app-fig2}
\end{figure}
We set
\begin{itemize}
    \item[-] TimeLimit$_1$=TimeLimit$_3$=2 seconds for $d\leqslant 50$,
    \item[-]
    TimeLimit$_1$=TimeLimit$_3$=5 seconds for $d> 50$,
    \item[-] TimeLimit$_2$=3,600 seconds.
\end{itemize}

\bibliographystyleapp{plainnat}
\bibliographyapp{myref}

%%%%%%%%%%%%%%%%%
\end{document}